\newcommand{\OBbeta}{ \mathbb{B}_R^{n,V }}
\newcommand{\R}{\mathbb R}
\newcommand{\N}{\mathbb N}
\newcommand{\eps }{ \varepsilon }
\newcommand{\DistkKompOrl }{ \lambda_{R, V}^{( n , k )}}
\newcommand{ \CondDistVoneVtwo}{ \widehat{\lambda}^{( R , k)}_{n , V_1 | V_2}( \cdot ) }
\newcommand{ \CondDistVoneVtwoeps}{ \widehat{\lambda}^{( R+ \epsilon  , k)}_{n , V_1 | V_2 }}
\DeclareMathOperator{\E}{\mathbb{E}}
\DeclareMathOperator{\prb}{\mathbb{P}}
\DeclareMathOperator{\I}{ \mathbb{I} }
\DeclareMathOperator{\U}{ \text{Unif} }
\DeclareMathOperator{\vol}{vol}
\newtheorem{thm}{Theorem}[section]
\newtheorem{cor}[thm]{Corollary}
\newtheorem{df}[thm]{Definition}
\newtheorem{proposition}[thm]{Proposition}
\newtheorem{rem}[thm]{Remark}
\newtheorem{lem}[thm]{Lemma}
\newtheorem{thmalpha}{Theorem}
\theoremstyle{definition}
\begin{document}

\title{\bf Sanov-type large deviations and conditional limit theorems for high-dimensional Orlicz balls}
\medskip

\author{Lorenz Fr\"uhwirth and Joscha Prochno}



\date{}

\maketitle

\begin{abstract}
	\small
	In this paper, we prove a Sanov-type large deviation principle for the sequence of empirical measures of vectors chosen uniformly at random from an Orlicz ball. From this level-$2$ large deviation result, in a combination with Gibbs conditioning, entropy maximization and an Orlicz version of the Poincar\'e-Maxwell-Borel lemma, we deduce a conditional limit theorem for high-dimensional Orlicz balls. Roughly speaking, the latter shows that if $V_1$ and $V_2$ are Orlicz functions, then random points in the $V_1$-Orlicz ball, conditioned on having a small $V_2$-Orlicz radius, look like an appropriately scaled $V_2$-Orlicz ball. In fact, we show that the limiting distribution in our Poincar\'e-Maxwell-Borel lemma, and thus the geometric interpretation, undergoes a phase transition depending on the magnitude of the $V_2$-Orlicz radius.
	\medspace
	\vskip 1mm
	\noindent{\bf Keywords}. {Entropy maximization, Gibbs conditioning principle, Large deviation principle, Orlicz space, Poincar\'e-Maxwell-Borel lemma, Sanov's theorem}\\
	{\bf MSC}. Primary 60F05, 60F10, 52A20; Secondary 52A23, 60D05
\end{abstract}

\section{Introduction \& Main results}

It is a classical and famous result, independently observed by Borel \cite{borel1914} and Maxwell (e.g., \cite{FreedmanDiaconis2011}), and commonly referred to as the Poincar\'e--Maxwell--Borel lemma, that any fixed number of coordinates of random vectors uniformly distributed on a high-dimensional Euclidean sphere is approximately Gaussian distributed. More precisely, if $X^{(n)}=\big(X^{(n)}(1),\dots,X^{(n)}(n)\big)$ is uniformly distributed on $\sqrt{n} \mathbb S^{n-1}= \big\{(x_i)_{i=1}^n\in\R^n\,:\,\sum_{i=1}^n |x_i|^2 = n \big\}$, then, for any fixed $k\leq n$, the distribution of the first $k$ coordinates $\big(X^{(n)}(1),\dots,X^{(n)}(k)\big)$ converges weakly to the distribution of a $k$-dimensional standard Gaussian random vector. The normalization of the sphere is taken to ensure that a typical coordinate of an element of $\sqrt{n} \mathbb S^{n-1}$ has unit order when the dimension $n$ of the ambient space is large. 

In 1991, Mogul'ski\u{\i} \cite{MogFin1991} and Rachev and R\"uschendorf \cite{RSR1991} obtained an $\ell_p$ analogue of the Poincar\'e--Maxwell--Borel lemma for the surface and cone measure respectively on a properly scaled $\ell_p$-sphere (note that surface and cone measure coincide whenever $p\in\{1,2,\infty\}$). A simplification of the arguments can be found in the work of Naor and Romik \cite[Theorems 3 and 4]{NR2003} and a significant generalization (including the case of Orlicz balls) has recently been obtained by Johnston and Prochno in \cite{PJ2020OrliczLim}. In the $\ell_p$-versions of the Poincar\'e--Maxwell--Borel lemma, instead of the standard Gaussian distribution, the so-called $p$-generalized Gaussian distribution appears in the weak limit, the Lebesgue density being given by
\[
\R \ni x\mapsto \frac{1}{2p^{1/p}\Gamma(1+\frac{1}{p})}e^{-|x|^p/p},\qquad 1\leq p<\infty.
\]
This density is intimately related to $\ell_p^n$-balls, we write $\mathbb B_p^n$, and of fundamental importance in essentially any probabilistic approach to understand their geometry; see, e.g., \cite{KPT2019_I, SS1991, SZ1990} and references cited therein.  

Recently, Kim and Ramanan \cite[Theorem 2.3]{RKConLim} showed that a conditional analog to the $\ell_p$-version of the Poincar\'e--Maxwell--Borel lemma holds. Roughly speaking, they found that for $1 \leq q < p < \infty $, as the dimension $n$ of the space is high, a random point on the $\ell_p^n$-sphere, conditioned on having a sufficiently small $\ell_q$-norm, is weakly close to a random point drawn from an appropriately scaled $\ell_q^n$-sphere. As the authors point out, this means that conditioning on a sufficiently small $\ell_q$-norm induces a probabilistic change that admits a nice geometric interpretation. The strategy of proof is based on ideas from statistical mechanics and large deviations theory (see also \cite{KP2021} for more in this direction). The authors first prove a level-$2$ large deviation principle for the empirical measure of the coordinates of a random point on a properly scaled $\ell_p^n$-sphere, where the random choice is made with respect to the cone probability measure on the boundary. Fundamental to this proof  is a probabilistic representation of the cone measure in terms of the $p$-generalized Gaussian distribution \cite[Lemma 3.9]{RKConLim}, which goes back to Schechtman and Zinn \cite{SZ1990} and Rachev and R\"uschendorf \cite{RSR1991}, and allows one to go over to random vectors with independent coordinates. Let us already point out that in our generalized setting considered here, no such probabilistic representation is available. Having proved a Sanov-type large deviation principle for the empirical measure, the authors in \cite{RKConLim} move forward deducing the conditional limit theorems from this large deviation result and the famous Gibbs conditioning principle; obviously numerous (technical) hurdles need to be taken here. The use of the Gibbs conditioning principle allows one to transform a Sanov-type large deviation principle for an empirical measure into a statement about the most likely behavior of the underlying sequence of random vectors conditioned on a rare event.

In this work, we prove that an analog result holds for the general and fundamental class of Orlicz balls; Orlicz spaces are natural generalizations of $\ell_p$-spaces and intensively studied in functional analysis and optimization theory (see, e.g., \cite{HAO2006303,Kam1984,Kosmol2011Opt,Kwapien1985,LT1977,PS2012Comb,RS1988, Schuett1994}). One of the challenges that arises is that no probabilistic representation, like in the case of $\ell_p$-balls, is at our disposal. This makes an analysis more challenging. In addition, a general Orlicz function is not homogeneous, making any approach to understanding analytic, geometric or probabilistic aspects of these spaces and their unit balls even more complicated. Key to our generalization, which follows the general theme of first proving a level-$2$ large deviation principle for the sequence of empirical measures of the coordinates of random points chosen uniformly from a suitably scaled Orlicz ball and then using Gibbs conditioning, is the statistical mechanics point of view. Indeed, here conditional distributions appear in connection to the study of non-interacting particle systems (micro-canonical and canonical ensembles), where one is seeking to describe the most likely state of the system under an energy constraint (see, e.g., \cite{E1985}, \cite{LDGibsMeasRasoulAga2005}, and \cite[Subsection 1.2]{KP2021}). This point of view directly links such questions to certain Gibbs distributions. Considering such Gibbs measures at suitable critical temperatures with potentials given by an Orlicz function allows us to work around the problem arising from the lack of a probabilistic representation. This idea has recently been put forward by Kabluchko and Prochno \cite{KP2021} (see particularly Subsection 1.2 there) in their probabilistic approach to the geometry of Orlicz balls and has also successfully been used in \cite{AP2022}, \cite{barthe2021volume}, and \cite{kimliaoramanan2020asymptotic}.

\subsection{Main results}

In order to present the main results of this paper, we need to introduce some notation; for the rest we refer the reader to Section \ref{sec:notation and prelim} below.

Recall that an Orlicz function $V: \R \rightarrow [0,\infty)$ is a convex, symmetric function with $V(0) = 0$ and $V(x) > 0$ for $ x \neq 0$. For $R \in (0,\infty )$, we denote by 
	\begin{equation*}
	\mathbb{B}_R^{n,V} := \Big \{ x \in \R^n \ : \ \sum_{k=1}^n V( x_k) \leq R n  \Big \}. 
	\end{equation*} 
the associated Orlicz ball in $\R^n$. If $V: \R \rightarrow [0,\infty)$ is an Orlicz function and $ \beta \in (- \infty, 0)$, we define the corresponding Gibbs measure $\mu_{V,\beta}$ (with potential $V$ and at critical temperature $\beta$) via the Lebesgue density
\begin{equation*} 
\frac{d \mu_{V,\beta}}{dx}(x):= \frac{e^{\beta V(x)}}{\int_{\R}e^{\beta V(t)}dt}, \quad x \in \R.
\end{equation*}
We shall denote by $\mathscr{M}_1(\R)$ the set of probability measures on the Borel $\sigma$-algebra on $\R$ and, for $\mu \in \mathscr{M} _1( \R )$, we define
\begin{equation*}
m_V( \mu ) := \int_{\R} V(x)  \mu (dx) \quad \text{ and } \quad \mathscr{M}^V_1( \R ):= \Big \{ \mu \in \mathscr{M}_1( \R )  \  : \ m_V(\mu ) < \infty \Big  \}. 
\end{equation*}
In case of $V(x)=|x|^p $ for some $p \in [1,\infty)$, we write $\mathscr{M}_1^p(\R) := \mathscr{M}_1^V(\R)$.
For a metric $d_{ w} $ inducing weak convergence on $\mathscr{M}_1( \R )$, we can define a metric $d_V$ on $\mathscr{M}^V_1( \R )$ with
\begin{equation*}
d_V( \mu ,\nu ) := d_{ w} ( \mu , \nu ) + \big |   m_V( \mu ) - m_V( \nu ) \big |.
\end{equation*}

Our first main result establishes a level-$2$ or Sanov-type large deviation principle for the empirical measure of the coordinates of a vector chosen uniformly at random from an Orlicz ball.

\begin{thmalpha}
	\label{ThmLDPEmpMeasuresOrlicz}
	Let $V$ be an Orlicz function and $R\in(0,\infty)$. Assume that $ X_R^{n,V} := \big ( X_R^{n,V}(1),..., X_R^{n,V}(n) \big )\sim \U (  \mathbb{B}_R^{n,V})$. Then, there exists a unique $\overline{\alpha} \in (-\infty,0)$ such that the sequence of empirical measures
	\begin{equation}
	\label{EqEmpMeasureOrlicz}
	\mathcal{L}^{n,V} := \frac{1}{n} \sum_{i=1}^{n} \delta_{ X_R^{n,V}(i)} ,\quad{ }  n \in \N ,
	\end{equation}
	satisfies an LDP in $\mathscr{M}_1( \R )$ with the strictly convex good rate function $ \mathbb{I}_V: \mathscr{M}_1(\R) \rightarrow [0,\infty)$, where
	\begin{equation}
	\label{EqDefGRFEmpMeasOrlicz}
	\mathbb{I}_V(\mu) = \begin{cases} 
	H( \mu | \mu_{ V, \overline{\alpha} }) + \overline{\alpha} \big[ m_V( \mu ) -R  \big] & : m_V( \mu ) \leq R \\
	\infty & \text{else}. 
	\end{cases}
	\end{equation}
	In addition, if there exists another Orlicz function $W$ such that
	\begin{equation*}
	\frac{V(x)}{W(x)} \longrightarrow \infty , \quad \text{as} \quad |x| \rightarrow \infty ,
	\end{equation*}
	then, $( \mathcal{L}^{n,V} )_{n \in \N }  $ satisfies an LDP in $(\mathscr{M}^{W}_1( \R ), d_W)$.
\end{thmalpha}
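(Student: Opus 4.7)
The plan is to bypass the absence of a Schechtman--Zinn-type representation for $\U(\OBbeta)$ by writing it as a tilting of the i.i.d.\ Gibbs product $\mu_{V,\overline{\alpha}}^{\otimes n}$ and then transferring the classical Sanov LDP through this change of measure. First I would pin down the critical inverse temperature: setting $\varphi(\beta) := m_V(\mu_{V,\beta})$, a direct differentiation gives $\varphi'(\beta) = \mathrm{Var}_{\mu_{V,\beta}}(V) > 0$, so $\varphi$ is continuous and strictly increasing on $(-\infty,0)$. Using $V(0)=0$ together with $V(x)\to\infty$ as $|x|\to\infty$, monotone/dominated convergence yields $\varphi(\beta)\to 0$ as $\beta\to-\infty$ and $\varphi(\beta)\to\infty$ as $\beta\to 0^-$, hence $\varphi$ is a bijection onto $(0,\infty)$ and we set $\overline{\alpha} := \varphi^{-1}(R)$.

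The driving identity is the change-of-measure formula
\[
\U(\OBbeta)(dx) = \frac{Z_{\overline{\alpha}}^n}{\vol(\OBbeta)}\, e^{-\overline{\alpha}\sum_{i=1}^{n}V(x_i)}\, \mathbbm{1}\bigl\{\tfrac{1}{n}\sum_{i=1}^{n} V(x_i) \leq R\bigr\}\, \mu_{V,\overline{\alpha}}^{\otimes n}(dx),
\]
which presents the law of $\mathcal{L}^{n,V}$ as the law of the empirical measure $\widehat{\mathcal{L}}^n$ of an i.i.d.\ Gibbs sample, tilted by the nonnegative bounded density $e^{-\overline{\alpha} n\, m_V(\widehat{\mathcal{L}}^n)}\mathbbm{1}\{m_V(\widehat{\mathcal{L}}^n)\leq R\}$. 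Sanov's theorem gives an LDP for $\widehat{\mathcal{L}}^n$ in $\mathscr{M}_1(\R)$ under the weak topology with good rate function $H(\cdot\mid\mu_{V,\overline{\alpha}})$, and because $\int e^{\lambda V}\,d\mu_{V,\overline{\alpha}}<\infty$ for every $\lambda<-\overline{\alpha}$, a Cram\'er-type exponential-tightness argument for $\frac{1}{n}\sum_{i=1}^{n} V(Y_i)$ promotes this LDP to the finer metric $d_V$, in which $\mu\mapsto m_V(\mu)$ is continuous and $\{m_V\leq R\}$ is closed.

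In the $d_V$-topology the tilting factor is continuous on its support and uniformly sandwiched between $1$ and $e^{-\overline{\alpha}R}$, so a Varadhan/Laplace-type tilting argument produces an LDP for $\mathcal{L}^{n,V}$ with rate function
\[
\mu \longmapsto H(\mu\mid\mu_{V,\overline{\alpha}}) + \overline{\alpha}\, m_V(\mu) - \inf_{m_V(\nu)\leq R}\bigl[H(\nu\mid\mu_{V,\overline{\alpha}}) + \overline{\alpha}\, m_V(\nu)\bigr]
\]
on $\{m_V\leq R\}$ and $+\infty$ elsewhere. The infimum is computed directly: since $H(\nu\mid\mu_{V,\overline{\alpha}})\geq 0$ and $\overline{\alpha}\, m_V(\nu)\geq \overline{\alpha}R$ on $\{m_V(\nu)\leq R\}$ (using $\overline{\alpha}<0$), the bracketed functional is bounded below by $\overline{\alpha}R$, and this bound is attained precisely at $\nu=\mu_{V,\overline{\alpha}}$ (where $H$ vanishes and, by the calibration of $\overline{\alpha}$, $m_V(\nu)=R$). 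Substituting $\overline{\alpha}R$ produces $\I_V$ exactly as stated; uniqueness of the minimizer shows $\I_V$ vanishes only at $\mu_{V,\overline{\alpha}}$. Strict convexity is inherited from the strict convexity of relative entropy in its first argument together with the affinity of $m_V$, and goodness follows from the goodness of Sanov's rate function combined with the closedness of $\{m_V\leq R\}$ in $d_V$. The LDP on $(\mathscr{M}_1^V,d_V)$ then transfers to the weak topology on $\mathscr{M}_1(\R)$ via the continuous inclusion, with $\I_V$ extended by $+\infty$ off $\mathscr{M}_1^V$; this is the first assertion.

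For the second assertion, the domination $V/W\to\infty$ at infinity provides, for every $\varepsilon>0$, a constant $C_\varepsilon$ with $W\leq\varepsilon V+C_\varepsilon$; hence $m_W(\mu)\leq\varepsilon R+C_\varepsilon$ uniformly on the effective domain $\{m_V\leq R\}$, and the uniform integrability of $W$ against $V$-controlled measures ensures that $d_V$-convergence implies $d_W$-convergence on $\mathscr{M}_1^V$. The continuous inclusion $(\mathscr{M}_1^V,d_V)\hookrightarrow(\mathscr{M}_1^W,d_W)$ then transfers the LDP by the trivial contraction. I expect the main technical obstacle to be the tilting step itself: because $m_V$ is only lower semicontinuous under weak convergence, Varadhan's lemma cannot be applied directly in $\mathscr{M}_1(\R)$, so one must first invest in the Sanov strengthening to $d_V$ in order for the tilting exponent and the constraint set to be handled by genuine continuity and closedness. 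Matching the normalization $\overline{\alpha}R$ against the actual log-volume asymptotics of $\OBbeta$ is the last place where the computation has to close.
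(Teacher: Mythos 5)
Your overall architecture (identify $\overline{\alpha}$ by calibrating $m_V(\mu_{V,\overline{\alpha}})=R$, write $\U(\OBbeta)$ as a tilt of $\mu_{V,\overline{\alpha}}^{\otimes n}$, and transfer Sanov through the tilt) is a reasonable alternative to the paper's route, and your change-of-measure identity and the computation of the normalizing infimum $\inf\{H(\nu|\mu_{V,\overline{\alpha}})+\overline{\alpha}m_V(\nu):m_V(\nu)\leq R\}=\overline{\alpha}R$ are correct. However, there is a genuine gap at the step you yourself identify as the crux: the claimed promotion of Sanov's theorem to the metric $d_V$ for the i.i.d.\ Gibbs sample. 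The measure $\mu_{V,\overline{\alpha}}$ satisfies $\int e^{\lambda V}\,d\mu_{V,\overline{\alpha}}<\infty$ only for $\lambda<-\overline{\alpha}$, and this is \emph{not} enough. Compact sets of $(\mathscr{M}_1^V,d_V)$ require $\sup_{\mu\in K}\int_{|x|>r}V\,d\mu\to0$ as $r\to\infty$ (Proposition \ref{PropGenWassSpace}), and a Chernoff bound for $\frac1n\sum_i V(Y_i)\mathbbm{1}_{\{|Y_i|>r\}}$ exceeding $\eps$ yields a rate of at most $\sup_{\lambda<-\overline{\alpha}}\lambda\eps=-\overline{\alpha}\eps$, which stays bounded as $r\to\infty$ and tends to $0$ as $\eps\to0$. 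Hence exponential tightness of $(\widehat{\mathcal{L}}^n)_n$ in $(\mathscr{M}_1^V,d_V)$ fails, and indeed Sanov's theorem in this $V$-Wasserstein-type topology is known to require \emph{all} exponential moments of $V$ (the Wang--Wang--Wu criterion). Without the $d_V$-LDP you cannot run Varadhan's lemma: in the weak topology $m_V$ is only lower semicontinuous, the tilt $e^{-\overline{\alpha}n\,m_V(\widehat{\mathcal{L}}^n)}$ is \emph{increasing} in $m_V$ (since $-\overline{\alpha}>0$), so the large-deviation \emph{upper} bound — the direction that needs upper semicontinuity of the exponent — is precisely what breaks. Note also that the theorem only asserts an LDP in $d_W$ for $W$ with $V/W\to\infty$, never in $d_V$ itself; your intermediate claim is strictly stronger than the conclusion and is false in general.

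The paper circumvents this by never invoking Sanov for the unconstrained i.i.d.\ sample. It applies the abstract G\"artner--Ellis theorem directly on $\mathcal{M}(\R)$ with the weak topology: for each test function $\lambda\in C_b(\R)$ it introduces a $\lambda$-dependent Gibbs tilt $e^{\alpha(\lambda)V+\lambda}$ (Lemma \ref{AuxLemmaExpTilt}), obtains the upper bound on $\frac1n\log\int_{\mathbb{B}_R^{n,V}}e^{\sum\lambda(x_i)}dx$ trivially from $\alpha(\lambda)<0$ and the matching lower bound from a central limit theorem for $\sum_i(V(Z_i)-R)$, and gets exponential tightness for free because $\mathcal{L}^{n,V}$ lies deterministically in the weakly compact set $K_1^M$. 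The identification of the rate function is then a Legendre-transform computation (Proposition \ref{PropLegTransformOrlicz}) rather than a Varadhan argument, and the upgrade to $d_W$ uses that $\{m_V\leq R\}$ is $d_W$-compact when $V/W\to\infty$ (Proposition \ref{PropCompactSetGenWasserst}) — a compactness that is available only because $W$ is strictly dominated by $V$. If you want to salvage your tilting approach, you would have to replace the Sanov-plus-Varadhan step by a direct two-sided estimate of the tilted expectations, which essentially reconstructs the paper's G\"artner--Ellis computation.
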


\begin{rem}
	The proof of Theorem \ref{ThmLDPEmpMeasuresOrlicz} rests on a general version of the Gärtner--Ellis theorem. Beyond the Sanov-type large deviation principle on $\mathscr{M}_1( \R )$ endowed with the weak topology, also the last part of the statement, the large deviation principle on the generalized Wasserstein space $(\mathscr{M}^{W}_1( \R ), d_W)$, is essential for the proof of our second main result, Theorem \ref{ThmLimCondDist} below.
\end{rem}

Let $( X_R^{n,V} )_{n \in \N}$ be a sequence of random variables, where each $X_R^{n,V}$ is uniformly distributed on the Orlicz ball $ \OBbeta $. For fixed $k \in \N$, we denote by $ \DistkKompOrl $ the distribution of the first $k$ coordinates of $ X_R^{n,V} $, i.e.,
\begin{equation*}
\DistkKompOrl ( \cdot ) := \mathbb{P} \big[ \big ( X_R^{n, V}(1),..., X_R^{n, V}(k) \big )   \in \cdot \big] .
\end{equation*}

Let $V_1, V_2$ be two Orlicz functions and $X_1^{n,V_1 } \sim \U (  \mathbb{B}_1^{n,V_1})$. Then, for $ R \in (0, \infty )$, we define 
\begin{equation}
\label{EqCondDist}
\CondDistVoneVtwo := \mathbb{P} \big[ \big ( X_1^{n, V_1}(1),..., X_1^{n, V_1}(k) \big )   \in \cdot \big | X^{n, V_1}_1 \in \mathbb{B}^{n,V_2 }_R  \big]. 
\end{equation}

We can now state the second main result of this paper, which establishes a conditional limit theorem for random points in Orlicz balls.

\begin{thmalpha}
	\label{ThmLimCondDist}
	Let $V_1, V_2$ be Orlicz functions that satisfy
	\begin{equation}
	\label{EqCondOrliczFct}
	\frac{V_1(x)}{V_2(x)} \longrightarrow \infty  \quad{} \text{ as } \quad{} |x| \rightarrow \infty \quad \text{and} \quad  \int_{\R } V_1(x) e^{ \alpha V_2(x)} dx < \infty, 
	\end{equation}
	for all $\alpha \in (-\infty,0)$.
	Then, there exists an $ \overline{R } \in (0,\infty)$ such that for all $0 < R \leq \overline{R}$ and for fixed $k \in \N$, we have
	\begin{equation}
	\label{EqLimitMainThm}
	\lim_{ \epsilon \rightarrow 0} \lim_{n \rightarrow \infty } d_{ w} \big ( \CondDistVoneVtwoeps ,  \lambda^{( n ,k)}_{R , V_2}   \big ) = 0,
	\end{equation}
	where $d_{ w} $ denotes some metric inducing weak convergence of probability measures.
\end{thmalpha}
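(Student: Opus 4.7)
The plan is to view the conditioning event as a rare event for the empirical measure and apply a Gibbs conditioning argument driven by the LDP from Theorem~\ref{ThmLDPEmpMeasuresOrlicz}. Since $V_1(x)/V_2(x)\to\infty$ as $|x|\to\infty$, one may apply the last part of Theorem~\ref{ThmLDPEmpMeasuresOrlicz} with $V=V_1$, $R=1$, and $W=V_2$, obtaining a large deviation principle for $\mathcal{L}^{n,V_1}$ on the generalized Wasserstein space $(\mathscr{M}^{V_2}_1(\R),d_{V_2})$ with good rate function $\mathbb{I}_{V_1}$. The conditioning event rewrites as
\[
\{X_1^{n,V_1}\in\mathbb{B}^{n,V_2}_{R+\epsilon}\}=\bigl\{\mathcal{L}^{n,V_1}\in A_{R+\epsilon}\bigr\},\qquad A_r:=\{\mu\in\mathscr{M}^{V_2}_1(\R):m_{V_2}(\mu)\le r\},
\]
and $A_r$ is closed and convex in $d_{V_2}$ (which is precisely why the stronger $d_{V_2}$-LDP is needed rather than only its weak-topology counterpart).

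\textbf{Entropy maximization.} By the strict convexity of $\mathbb{I}_{V_1}$, the constrained problem
\[
\mu^*_r:=\arg\min\bigl\{\mathbb{I}_{V_1}(\mu):\mu\in A_r\bigr\}
\]
admits a unique minimizer. A Lagrange-multiplier computation (with multiplier $\lambda\ge 0$ for the $V_2$-constraint and absorbing $V_1$-terms into the reference Gibbs measure $\mu_{V_1,\overline{\alpha}}$) shows that its Lebesgue density must be proportional to $e^{\beta_r V_2(x)}$ for some $\beta_r\le 0$; that is, $\mu^*_r=\mu_{V_2,\beta_r}$. When the constraint binds, $\beta_r$ is determined by $m_{V_2}(\mu_{V_2,\beta_r})=r$. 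The phase-transition threshold $\overline{R}$ arises as the largest $r$ for which $\beta_r<0$ is still forced (equivalently, the value of $m_{V_2}$ at the unconstrained minimizer of $\mathbb{I}_{V_1}$); for $R\le\overline{R}$ the constraint is active. The hypothesis $\int V_1(x)e^{\alpha V_2(x)}\,dx<\infty$ for all $\alpha<0$ ensures $m_{V_1}(\mu_{V_2,\beta_r})<\infty$, and shrinking $\overline{R}$ if necessary guarantees $m_{V_1}(\mu_{V_2,\beta_r})\le 1$, so that $\mu^*_r$ lies in the effective domain of $\mathbb{I}_{V_1}$.

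\textbf{From level-$2$ to marginals and matching of limits.} Gibbs conditioning, applied using the $d_{V_2}$-LDP and the uniqueness of the minimizer, gives that $\mathcal{L}^{n,V_1}$, conditioned on $A_{R+\epsilon}$, converges in probability to $\mu_{V_2,\beta_{R+\epsilon}}$ in $d_{V_2}$; exchangeability of the coordinates of $X_1^{n,V_1}$ under the uniform law on $\mathbb{B}_1^{n,V_1}$ then propagates this level-$2$ concentration to convergence of the joint law of the first $k$ coordinates to the product measure $\mu_{V_2,\beta_{R+\epsilon}}^{\otimes k}$. Letting $\epsilon\downarrow 0$ and using continuity of $\beta\mapsto\mu_{V_2,\beta}$ yields the limit $\mu_{V_2,\beta_R}^{\otimes k}$. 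Applying Theorem~\ref{ThmLDPEmpMeasuresOrlicz} this time with $V=V_2$ at radius $R$ identifies $\overline{\alpha}$ as the unique $\beta<0$ with $m_{V_2}(\mu_{V_2,\beta})=R$, hence $\overline{\alpha}=\beta_R$; the Orlicz Poincar\'e--Maxwell--Borel lemma of Johnston--Prochno then produces the same product limit for $\lambda^{(n,k)}_{R,V_2}$, and the two sides of \eqref{EqLimitMainThm} agree. The main obstacle is the Gibbs conditioning step: in the absence of a Schechtman--Zinn-type probabilistic representation for the uniform measure on $\mathbb{B}_1^{n,V_1}$, one cannot reduce to i.i.d.\ random variables, and the passage from level-$2$ concentration to convergence of the $k$-dimensional marginal has to be carried out entirely through uniform-integrability arguments tailored to the $d_{V_2}$-topology and the Orlicz structure, while simultaneously verifying that the chain of limits $n\to\infty$ followed by $\epsilon\to 0$ remains controlled.
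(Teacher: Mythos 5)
Your outline follows essentially the same route as the paper: the $d_{V_2}$-LDP for $\mathcal{L}^{n,V_1}$ from Theorem~\ref{ThmLDPEmpMeasuresOrlicz}, Gibbs conditioning on the closed convex set $\{m_{V_2}(\mu)\le R\}$, identification of the unique minimizer as the Gibbs measure $\mu_{V_2,\alpha(R)}$ via entropy maximization, propagation to the $k$-marginals by exchangeability, and matching with $\lambda^{(n,k)}_{R,V_2}$ through the Orlicz Poincar\'e--Maxwell--Borel lemma. Two steps, however, are asserted rather than proved, and both require genuine arguments.

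First, the quantifier ``for all $0<R\le\overline{R}$'' does not follow from ``shrinking $\overline{R}$ if necessary.'' To conclude that $m_{V_1}(\mu_{V_2,\alpha(R)})\le 1$ for \emph{every} $R\le\overline{R}$ (so that the $V_1$-constraint is slack and the Lagrange multiplier for it vanishes, leaving a density proportional to $e^{\beta_R V_2}$), one must show that $\alpha\mapsto m_{V_1}(\mu_{V_2,\alpha})$ is monotone increasing and then choose $\overline{R}$ by $m_{V_1}(\mu_{V_2,\alpha(\overline{R})})=1$; this is the content of Lemma~\ref{LemMonotonepMoment} and is proved there by a layer-cake comparison of the tilted densities, using the integrability hypothesis in \eqref{EqCondOrliczFct}. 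Without monotonicity, a single shrink fixes one value of $R$ but not all smaller ones. Second, the passage from concentration of the conditioned empirical measure at $\nu_*$ to weak convergence of the first $k$ coordinates to $\nu_*^{\otimes k}$ --- which you correctly flag as the main obstacle in the absence of a Schechtman--Zinn representation --- is not an open-ended uniform-integrability issue: it is exactly Sznitman's equivalence between $\mu$-chaoticity of a family of symmetric laws and weak convergence of the associated empirical measures (Proposition~\ref{propPropchaos}), applied to the conditioned laws $\mu^n_\eps(\cdot)=\prb[X_1^{n,V_1}\in\cdot\,|\,X_1^{n,V_1}\in\mathbb{B}^{n,V_2}_{R+\eps}]$, whose symmetry follows from that of $V_1$. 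With these two ingredients supplied, your argument coincides with the paper's; the remaining organizational difference (concentrating at the minimizer over $A_{R+\eps}$ for fixed $\eps$ and then using continuity of $\beta\mapsto\mu_{V_2,\beta}$, versus the iterated-limit form of the Gibbs conditioning principle with $F=\bigcap_\eps F_\eps$) is immaterial.
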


We shall deduce Theorem \ref{ThmLimCondDist} from our Sanov-type large deviation result in Theorem \ref{ThmLDPEmpMeasuresOrlicz}, which we combine with the so-called Gibbs conditioning principle, entropy maximization, and an Orlicz version of the Poincar\'e-Maxwell-Borel lemma. Let us continue with a few remarks related to Theorem \ref{ThmLimCondDist}.

\begin{rem}
	Under the assumptions of Theorem \ref{ThmLimCondDist}, roughly speaking, in high dimensions we have that random points in the $ V_1$-Orlicz ball, conditioned on having a small $V_2$-Orlicz radius, look like an appropriately scaled $V_2$-Orlicz ball.
\end{rem}

\begin{rem}
Theorem \ref{ThmLimCondDist} generalizes \cite[Theorem 2.3]{RKConLim}. There the authors proved a conditional limit theorem in an $\ell_p$-setting. Let us point out that there spheres rather than balls were considered.
\end{rem}

\begin{rem}
	In Corollary \ref{CorBigVtwoRadius} we will see that, if the radius $R$ in the condition of $\CondDistVoneVtwoeps$ is chosen sufficiently large, then the limiting distribution of $ \CondDistVoneVtwoeps$ is the same as the one of $ \lambda^{( n ,k)}_{1 , V_1}$. Geometrically this means that, in high dimensions, points in a $V_1$-Orlicz ball conditioned on being in a big $V_2$-Orlicz ball are uniformly distributed in the $V_1$-Orlicz ball, which is to be expected. We refer to Remark \ref{RemInterMedCaseOrliczRadius} on the phase transition in the \glqq{}intermediate case\grqq.
\end{rem} 

\begin{rem}
	The second condition in Equation \eqref{EqCondOrliczFct} ensures the existence of $m_{V_2}( \mu_{V_1, \alpha})$ for all $\alpha  \in (-\infty,0)$. This condition does not appear in case of $\ell_p$-balls, since 
	\begin{equation*}
	\int_{\R} |x|^p e^{ - |x|^q} dx < \infty 
	\end{equation*}
	for all $ 1 \leq p,q < \infty$. In particular, the second Condition in \eqref{EqCondOrliczFct} is satisfied for all polynomially bounded $V_1$. A similar condition appears in \cite[Theorem \ref{ThmLimCondDist}]{KP2021}. 
\end{rem}

\begin{rem}
	Fix $p \in [1, \infty)$ and take some Orlicz function $V$ with
	\begin{equation*}
	\frac{V(x)}{|x|^p} \longrightarrow \infty \quad \text{as} \quad |x| \rightarrow \infty .
	\end{equation*}
	Then, the sequence of empirical measures $( \mathcal{L}^{n,V})_{n \in \N}$ from Equation \eqref{EqEmpMeasureOrlicz} satisfies an LDP in the $p$-Wasserstein space $\mathscr{M}^{p}_1( \R )$ with good rate function $\mathbb{I}_V$ defined in Equation \eqref{EqDefGRFEmpMeasOrlicz} (see also Corollary \ref{CorThinshellOrl}). 
\end{rem}

\section{Notation and Preliminaries}\label{sec:notation and prelim}

\subsection{Notation}

We shall use the following standard notation. Given a Borel measurable set $A\subseteq \R^n$, we shall denote by $\vol_n$ the $n$-dimensional Lebesgue measure. If $X$ is a topological space, then for Borel sets $A \subseteq X$, we write $ A^{ \circ }$ and $\overline{A}$ for the interior and the closure of $A$ respectively.  For a sequence $( \nu_n)_{n \in \N} \subseteq \mathcal{M}_1( \R)$, which converges weakly to a distribution $ \nu \in \mathcal{M}_1( \R)$ as $n \rightarrow \infty$, we write $ \nu_n \stackrel{d}{ \longrightarrow } \nu $ as $n \rightarrow \infty$.

For a measure $ \nu \in \mathscr{M}_1( \R )$, we define the relative entropy $H( \cdot | \nu ) : \mathscr{M}_1( \R ) \rightarrow \R $ with
\begin{equation}
\label{EqDefinitionRelEntr}
H( \mu | \nu ) := \begin{cases}
\int_{\R } \log  \frac{d \mu }{d \nu } (x)  \mu (dx) & :\,  \frac{d \mu }{d \nu } \text{ exists} \\
\infty &\, \text{ else}.
\end{cases}
\end{equation}
A related quantity is the entropy $h : \mathscr{M}_1( \R ) \rightarrow \R $ of a measure, which is defined as 
\begin{equation*}
h( \nu ) := \begin{cases}
- \int_{\R} \log \frac{d \nu }{ dx } ( x) d x & :\, \frac{d \nu }{ dx } \text{ exists} \\
\infty & \,\text{ else}.
\end{cases}
\end{equation*}

For $ \mu \in \mathscr{M}_1( \R)$ and a measurable function $f : \R \rightarrow \R$, we set
\begin{equation*}
\mathbb{E}_{\mu } [f] := \int_{ \R} f(x) \mu(dx),
\end{equation*}
provided the latter exists. Also, for $\mu \in \mathscr{M}_1( \R)$, we define $\delta_{\mu}:  \mathscr{M}_1( \R) \rightarrow \R $ with 
\begin{equation*}
\delta_{ \mu }( \nu ) = \begin{cases}
1 & : \nu = \mu \\
0 & \text{else} .
\end{cases}, \qquad{}\nu \in \mathscr{M}_1( \R).
\end{equation*}
For $x \in \R$, $\delta_x$ denotes the Dirac measure.

We will frequently work with the space of bounded and continuous functions mapping from $\R$ to $\R$, which we shall denote by $C_b(\R)$.

\subsection{Orlicz functions \& Gibbs measures}

Let us recall here what an Orlicz function is and introduce Gibbs measures whose potentials are given by Orlicz functions.

\begin{df}
		An Orlicz function $V: \R \rightarrow [0,\infty)$ is a convex, symmetric function with $V(0) = 0$ and $V(x) > 0$ for $ x \neq 0$. We define the associated Orlicz ball 
		\begin{equation*}
		\mathbb{B}_R^{n,V} := \Big \{ x \in \R^n \ : \ \sum_{k=1}^n V( x_k) \leq R n  \Big \} 
		\end{equation*}
		for some $ R > 0$ and in case of $R=1$, we write $ \mathbb{B}^{n,V} : =  \mathbb{B}_1^{n,V}$. 
\end{df}

For some Orlicz function $V: \R \rightarrow [0,\infty)$ and $ \beta \in (- \infty, 0)$, we define the Gibbs measure $\mu_{V,\beta}$ via the Lebesgue density
\begin{equation*}
\frac{d \mu_{V,\beta}}{dx}(x):= \frac{e^{\beta V(x)} }{\int_{\R} e^{\beta V(t)} dt}, \quad x \in \R.
\end{equation*}

\subsection{Basics from large deviation theory and probability}

Consider a sequence of random variables $( \xi_n)_{n \in \N}$ taking values in a topological space $X$, equipped with a Borel sigma algebra and a probability measure $\mathbb{P}$ on it. We say that $( \xi_n)_{ n \in \N}$ satisfies an LDP in $X$, if there exists a good rate function (GRF) $\I : X \rightarrow [0, \infty ]$, i.e $\I $ has compact level sets, such that
\begin{equation}
\label{EqIntrLDP}
- \inf_{ x \in A^{ \circ }} \I( x) \leq \liminf_{ n \rightarrow \infty } \frac{1}{n} \log \prb[ \xi_n \in A^{ \circ } ] \leq \limsup_{ n \rightarrow \infty } \frac{1}{n} \log \prb[ \xi_n \in \overline{A}] \leq - \inf_{ x \in  \overline{A}} \I( x) 
\end{equation} 
for all Borel sets $A \subseteq X$.

There are different ways to show that a sequence of random variables satisfies an LDP, the most commonly used is the so called contraction principle (see, e.g., Theorem 4.2.1 in \cite{DZ2011}).

\begin{lem}
\label{LemContractionPrinciple}
Let $X$ and $Y$ be Hausdorff topological spaces and $f: X \rightarrow Y$ be a continuous function. Let $( \xi_n)_{ n \in \N}$ be a sequence of random variables that satisfies an LDP in $X$ with GRF $\mathbb{I}: X \rightarrow [0, \infty ]$. Then, the sequence $( f( \xi_n))_{n \in \N}$ satisfies an LDP in $Y$ with the GRF $\mathbb{I}' : Y \rightarrow [0, \infty ]$, where
\begin{equation*}
\mathbb{I}'( y) := \inf_{ x \in f^{-1}( \{ y \})} \mathbb{I}(x).
\end{equation*}
\end{lem}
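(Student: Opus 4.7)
The plan is to verify, in turn, that $\mathbb{I}'$ is a good rate function and that the upper and lower LDP bounds of \eqref{EqIntrLDP} transfer from $(\xi_n)_{n\in\N}$ to $(f(\xi_n))_{n\in\N}$. Two elementary topological facts drive everything: continuous images of compact sets are compact, which will yield goodness of $\mathbb{I}'$; and preimages under continuous maps preserve openness and closedness, which, together with the identity $\prb[f(\xi_n)\in A] = \prb[\xi_n \in f^{-1}(A)]$, transfers the bounds. The bridge between $\mathbb{I}$ and $\mathbb{I}'$ is the tautology
\begin{equation*}
\inf_{x\in f^{-1}(B)}\mathbb{I}(x) \;=\; \inf_{y\in B}\;\inf_{x\in f^{-1}(\{y\})}\mathbb{I}(x) \;=\; \inf_{y\in B}\mathbb{I}'(y),
\end{equation*}
valid for any $B \subseteq Y$.

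For goodness, I would fix $\alpha \in [0,\infty)$ and show that $\{y\in Y : \mathbb{I}'(y)\leq\alpha\} = f\bigl(\{x\in X:\mathbb{I}(x)\leq\alpha\}\bigr)$. The inclusion $\supseteq$ is immediate from the definition of $\mathbb{I}'$. For $\subseteq$, one needs that the infimum defining $\mathbb{I}'(y)$ is attained whenever finite. This follows by picking a minimising net inside the compact level set $\{\mathbb{I}\leq \mathbb{I}'(y)+1\}$, extracting a convergent subnet with some limit $x^*$, using continuity of $f$ to see that $f(x^*)=y$, and lower semicontinuity of $\mathbb{I}$ (itself a consequence of $X$ being Hausdorff and the level sets being compact hence closed) to conclude $\mathbb{I}(x^*)\leq \mathbb{I}'(y)$. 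The sublevel set of $\mathbb{I}'$ is then the continuous image of a compact set, hence compact in the Hausdorff space $Y$.

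For the LDP bounds, given a Borel set $A\subseteq Y$, continuity of $f$ makes $f^{-1}(A^\circ)$ open and $f^{-1}(\overline{A})$ closed in $X$, so the hypothesised LDP for $(\xi_n)_{n\in\N}$ directly yields asymptotic bounds in terms of $\inf_{x\in f^{-1}(A^\circ)} \mathbb{I}(x)$ and $\inf_{x\in f^{-1}(\overline{A})} \mathbb{I}(x)$, which by the identification displayed above equal $\inf_{y\in A^\circ}\mathbb{I}'(y)$ and $\inf_{y\in \overline{A}}\mathbb{I}'(y)$ respectively. I do not anticipate a real obstacle here; the argument is purely topological bookkeeping, and the only point that calls for a little care is the attainment of the infimum in the definition of $\mathbb{I}'(y)$, which is needed to identify sublevel sets of $\mathbb{I}'$ exactly with, rather than as proper subsets of, the corresponding images of sublevel sets of $\mathbb{I}$.
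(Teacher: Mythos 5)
Your argument is correct and is precisely the standard proof of the contraction principle (Theorem 4.2.1 in Dembo--Zeitouni), which the paper simply cites without reproducing: identify the sublevel sets of $\mathbb{I}'$ with continuous images of those of $\mathbb{I}$ via attainment of the infimum on the closed fibre $f^{-1}(\{y\})$, and transfer the bounds through $\prb[f(\xi_n)\in A]=\prb[\xi_n\in f^{-1}(A)]$ using that $f^{-1}(A^{\circ})$ is open and $f^{-1}(\overline{A})$ is closed. No gaps.
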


When working with LDPs it may occur that one encounters different topologies on the same ambient space $X$. Corollary \ref{CorCoarserFinerTop} is a useful tool when dealing with that situation.

\begin{df}
Let $( \xi_n)_{n \in \N}$ be a sequence of random variables taking values in a topological space $X$. The sequence $( \xi_n)_{n \in \N}$ is called exponentially tight if for all $ \alpha \in (-\infty, 0)$ there exists a compact set $K_{ \alpha } \subseteq X$ such that
\begin{equation*}
\limsup_{n \rightarrow \infty }\frac{1}{n} \log \prb \big [  \xi_n \in K_{ \alpha }^c \big ] < \alpha . 
\end{equation*}
\end{df}
\begin{cor}(Corollary 4.2.6 in \cite{DZ2011})
\label{CorCoarserFinerTop}
Let $(\xi_n)_{n \in \N}$ be an exponentially tight sequence of random variables on a topological space $X$ equipped with a topology $\tau_1$. If $(\xi_n)_{n \in \N}$ satisfies an LDP with respect to a Hausdorff topology $\tau_2$ that is coarser than $ \tau_1$, then the same LDP holds with respect to the finer topology $\tau_1$.
\end{cor}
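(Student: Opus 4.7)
The plan is to prove the LDP in the finer topology $\tau_1$ by reducing every probability estimate to a set on which $\tau_1$ and $\tau_2$ coincide, using exponential tightness to absorb the rest. The pivotal topological fact is that if $K \subseteq X$ is $\tau_1$-compact and $\tau_2$ is Hausdorff, then $\tau_1|_K = \tau_2|_K$: indeed, the identity $(K,\tau_1|_K) \to (K,\tau_2|_K)$ is a continuous bijection from a compact space to a Hausdorff space, hence a homeomorphism. In particular, every $\tau_1$-compact set is $\tau_2$-compact (and so $\tau_2$-closed), and the traces of $\tau_1$- and $\tau_2$-open/closed sets on a $\tau_1$-compact set agree.

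With this in hand, the upper bound in $\tau_1$ is almost immediate. Given a $\tau_1$-closed $F$ and $\alpha<0$, let $K_\alpha$ be the $\tau_1$-compact set supplied by exponential tightness. Then
\[
\prb[\xi_n \in F] \leq \prb[\xi_n \in F \cap K_\alpha] + \prb[\xi_n \notin K_\alpha].
\]
The set $F \cap K_\alpha$ is $\tau_1$-closed in a $\tau_1$-compact, hence $\tau_1$-compact, hence $\tau_2$-compact, hence $\tau_2$-closed, so the $\tau_2$-LDP upper bound gives $\limsup \tfrac{1}{n}\log \prb[\xi_n \in F \cap K_\alpha] \leq -\inf_{F} \I$. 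Exponential tightness bounds the second term by $\alpha$, and sending $\alpha \to -\infty$ finishes the upper bound. An analogous argument shows that the $\tau_2$-compact (hence $\tau_1$-closed) level set $\{\I \leq a\}$ is contained in $K_\alpha$ for $\alpha < -a$, so it is $\tau_1$-compact; thus $\I$ remains a GRF in $\tau_1$.

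For the lower bound, fix a $\tau_1$-open $G$ and $x \in G$ with $\I(x) < \infty$. Choose $\alpha < -\I(x)$ and, if necessary, replace $K_\alpha$ by $K_\alpha \cup \{\I \leq \I(x)\}$ (still $\tau_1$-compact by the previous step) so that $x \in K_\alpha$. Since the two topologies agree on $K_\alpha$, the relatively $\tau_1$-open set $G \cap K_\alpha$ equals $V \cap K_\alpha$ for some $\tau_2$-open $V \subseteq X$, and in particular $x \in V$. Then $\prb[\xi_n \in G] \geq \prb[\xi_n \in V] - \prb[\xi_n \notin K_\alpha]$; the $\tau_2$-LDP lower bound yields $\liminf \tfrac{1}{n}\log \prb[\xi_n \in V] \geq -\I(x)$, while the tightness term decays at logarithmic rate at most $\alpha < -\I(x)$ and is therefore absorbed into the lower bound.

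The main obstacle is purely topological bookkeeping: arranging that every event to be estimated either sits inside a $\tau_1$-compact set (on which the two topologies coincide, so that $\tau_2$-LDP bounds transfer verbatim) or is $\tau_1$-exponentially negligible. Once the single lemma about identity maps from compact to Hausdorff spaces is in place, the LDP inequalities in the finer topology follow by routine decomposition.
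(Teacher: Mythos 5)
The paper offers no proof of this statement: it is quoted verbatim as Corollary 4.2.6 of Dembo--Zeitouni, so there is nothing internal to compare against. Your argument is a correct, self-contained proof, and it is in substance the standard one: Dembo--Zeitouni deduce this corollary from the inverse contraction principle applied to the identity map $(X,\tau_1)\to(X,\tau_2)$, and your proof simply unrolls that argument in the special case, with the same pivot (exponential tightness reduces every estimate to a $\tau_1$-compact set, on which a continuous bijection onto a Hausdorff space forces $\tau_1|_K=\tau_2|_K$). Two places deserve one more line each, though neither is a gap. First, the containment $\{\I\le a\}\subseteq K_\alpha$ for $\alpha<-a$ is not really ``analogous'' to the upper-bound decomposition: it follows by applying the $\tau_2$-LDP \emph{lower} bound to the $\tau_2$-open set $K_\alpha^c$ (legitimate because $K_\alpha$, being $\tau_1$-compact, is $\tau_2$-compact and hence $\tau_2$-closed), which gives $\inf_{K_\alpha^c}\I\ge-\alpha>a$. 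Second, in the lower bound the inequality $\prb[\xi_n\in G]\ge\prb[\xi_n\in V]-\prb[\xi_n\notin K_\alpha]$ combined with $\liminf\frac1n\log\prb[\xi_n\in V]\ge-\I(x)$ and $\limsup\frac1n\log\prb[\xi_n\notin K_\alpha]<\alpha<-\I(x)$ yields the claim only after the (elementary, but worth stating) observation that if $b_n/a_n\to0$ exponentially fast then $\liminf\frac1n\log(a_n-b_n)=\liminf\frac1n\log a_n$. With those two sentences added, the proof is complete.
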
 

Lastly, we present a version of the Gärtner--Ellis Theorem, which will play an important role in the proof of Theorem \ref{ThmLDPEmpMeasuresOrlicz} from Section 2.

\begin{lem}(Corollary 4.16.14 in \cite{DZ2011})
\label{LemGaertnerEllis}
Let $( \xi_n)_{n \in \N}$ be a sequence of exponentially tight random variables on the locally convex Hausdorff topological vector space $\mathcal{E}$. Suppose the Gärtner--Ellis limit
\begin{equation*}
\Lambda( \lambda ):= \lim_{n \rightarrow \infty } \frac{1}{n} \log \mathbb{E} \Big [ e^{ n  \lambda ( \xi_n ) } \Big ], \quad \lambda \in \mathcal{E}^{*},
\end{equation*}
exists in $\R $ and is Gateaux-differentiable, where $\mathcal{E}^{*}$ is the topological dual of $\mathcal{E}$. Then, $( \xi_n)_{n \in \N}$ satisfies the LDP in $\mathcal{E}$ with the convex GRF $\Lambda^{*}: \mathcal{E} \rightarrow [0, \infty]$, given by
\begin{equation*}
\Lambda^{*}( x) := \sup_{ \lambda \in \mathcal{E}^{*} } \big \{  \lambda(x) - \Lambda( \lambda )  \big \}.
\end{equation*}
\end{lem}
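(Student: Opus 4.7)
The plan is to execute the standard three-step Chernoff-plus-tilting scheme behind every Gärtner--Ellis-type proof, adapted to the locally convex Hausdorff setting: (i) derive an exponential upper bound on compact sets via Markov's inequality and promote it to all closed sets using exponential tightness; (ii) obtain a matching lower bound on open sets by a Cramér-type change of measure, where Gateaux differentiability is used to identify the tilting direction; and (iii) read off goodness of $\Lambda^{\ast}$ from exponential tightness combined with the LDP just produced. Convexity of $\Lambda^{\ast}$ is automatic, since it is the pointwise supremum of the continuous affine maps $x\mapsto \lambda(x)-\Lambda(\lambda)$, and the same observation gives lower semicontinuity.

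For the upper bound, I fix a compact set $K\subseteq \mathcal{E}$ and $\varepsilon>0$. For each $x\in K$ I pick $\lambda_{x}\in \mathcal{E}^{\ast}$ nearly realising $\Lambda^{\ast}(x)$ (or, if $\Lambda^{\ast}(x)=\infty$, exceeding $1/\varepsilon$), and by continuity of $\lambda_{x}$ I obtain an open neighbourhood $U_{x}\ni x$ on which $\lambda_{x}\ge \lambda_{x}(x)-\varepsilon$. Markov's inequality then gives
\[
\frac{1}{n}\log \mathbb{P}[\xi_{n}\in U_{x}] \;\le\; -\lambda_{x}(x)+\varepsilon+\frac{1}{n}\log \mathbb{E}\big[e^{n\lambda_{x}(\xi_{n})}\big],
\]
whose limsup is at most $-\Lambda^{\ast}(x)+2\varepsilon$. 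Extracting a finite subcover of $K$, invoking the principle of the largest term, and letting $\varepsilon\downarrow 0$ yields the bound on compact sets; exponential tightness extends it to any closed $F$ by splitting along $F\cap K_{\alpha}$ and $F\cap K_{\alpha}^{c}$ and sending $\alpha\to -\infty$.

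The lower bound is the real obstacle, and this is where Gateaux differentiability of $\Lambda$ is essential. For $x$ with $\Lambda^{\ast}(x)<\infty$, Gateaux differentiability lets me pick $\lambda_{x}\in\mathcal{E}^{\ast}$ such that $\Lambda^{\ast}(x)=\lambda_{x}(x)-\Lambda(\lambda_{x})$ and $x$ equals the Gateaux derivative of $\Lambda$ at $\lambda_{x}$. I introduce the tilted probability measures
\[
\frac{d\mathbb{Q}_{n}}{d\mathbb{P}}\;=\;\frac{e^{n\lambda_{x}(\xi_{n})}}{\mathbb{E}\big[e^{n\lambda_{x}(\xi_{n})}\big]}.
\]
Under $\mathbb{Q}_{n}$ the log-moment-generating functional along $\mu\in\mathcal{E}^{\ast}$ is $\Lambda(\lambda_{x}+\mu)-\Lambda(\lambda_{x})$, whose Fenchel conjugate at $y$ equals $\Lambda^{\ast}(y)+\Lambda(\lambda_{x})-\lambda_{x}(y)$ and is uniquely minimised at $y=x$ thanks to Gateaux differentiability. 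Applying the upper bound of step (i) to $(\xi_{n},\mathbb{Q}_{n})$, whose exponential tightness follows from that of $(\xi_{n},\mathbb{P})$ together with the uniform bound $e^{-n\Lambda(\lambda_{x})+o(n)}$ on the normaliser, yields $\mathbb{Q}_{n}[\xi_{n}\notin V]\to 0$ exponentially for every open neighbourhood $V$ of $x$. Rewriting
\[
\mathbb{P}[\xi_{n}\in G] \;\ge\; \mathbb{E}\big[e^{n\lambda_{x}(\xi_{n})}\big]\, \mathbb{E}_{\mathbb{Q}_{n}}\!\big[e^{-n\lambda_{x}(\xi_{n})}\mathbf{1}_{\{\xi_{n}\in V\}}\big]
\]
for any $V\subseteq G$, and using continuity of $\lambda_{x}$ to approximate $\lambda_{x}(\xi_{n})\approx \lambda_{x}(x)$ on a sufficiently small $V$, delivers $\liminf_{n}\tfrac{1}{n}\log \mathbb{P}[\xi_{n}\in G]\ge \Lambda(\lambda_{x})-\lambda_{x}(x)=-\Lambda^{\ast}(x)$.

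Finally, for goodness of $\Lambda^{\ast}$, suppose $\Lambda^{\ast}(x)\le c$ but $x\notin K_{\alpha}$ for some $\alpha<-c$. Then the open set $K_{\alpha}^{c}$ is a neighbourhood of $x$, so the lower bound forces $\liminf_{n}\tfrac{1}{n}\log \mathbb{P}[\xi_{n}\in K_{\alpha}^{c}]\ge -c$, contradicting $\limsup_{n}\tfrac{1}{n}\log \mathbb{P}[\xi_{n}\in K_{\alpha}^{c}]<\alpha<-c$. Hence $\{\Lambda^{\ast}\le c\}\subseteq K_{\alpha}$ and is therefore a closed subset of a compact set, hence compact. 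The two technical subtleties I expect to be the most delicate are, first, verifying exponential tightness of the tilted sequence $(\xi_{n},\mathbb{Q}_{n})$ in infinite dimensions while controlling the perturbation coming from the tilt, and, second, turning the pointwise Gateaux differentiability of $\Lambda$ at $\lambda_{x}$ into the uniqueness of the Legendre-transform minimiser at $x$, which is a convex-analytic step that requires some care when $\mathcal{E}^{\ast}$ is equipped merely with the weak-$\ast$ topology.
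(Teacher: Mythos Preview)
The paper does not prove this lemma at all; it is quoted as Corollary~4.6.14 of Dembo--Zeitouni and used as a black box in the proof of Theorem~\ref{ThmLDPEmpMeasuresOrlicz}. So there is no argument in the paper to compare against, and your sketch is being measured against the standard textbook proof.

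Your upper bound and your goodness argument are correct and standard. The lower bound, however, contains a genuine gap. You write: ``For $x$ with $\Lambda^{\ast}(x)<\infty$, Gateaux differentiability lets me pick $\lambda_{x}\in\mathcal{E}^{\ast}$ such that $\Lambda^{\ast}(x)=\lambda_{x}(x)-\Lambda(\lambda_{x})$ and $x$ equals the Gateaux derivative of $\Lambda$ at $\lambda_{x}$.'' Gateaux differentiability of $\Lambda$ does \emph{not} give you this. What it gives is the forward statement: for every $\lambda$ the derivative $D\Lambda(\lambda)\in\mathcal{E}$ is an exposed point of $\Lambda^{\ast}$ with exposing functional $\lambda$. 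It does not say that every $x$ in the effective domain of $\Lambda^{\ast}$ arises this way; that would amount to surjectivity of $\lambda\mapsto D\Lambda(\lambda)$ onto $\{\Lambda^{\ast}<\infty\}$, which is neither assumed nor generally true. Your tilting argument is valid only at exposed points.

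The missing ingredient --- and this is the actual content of the corollary in Dembo--Zeitouni --- is a convex-analytic approximation step: one proves the lower bound first at exposed points (your change-of-measure is correct there), and then shows that when $\Lambda$ is everywhere finite and Gateaux differentiable, the infimum of $\Lambda^{\ast}$ over any open (convex) set is already attained along exposed points. In $\R^{d}$ this is Rockafellar's lemma on the relative interior of $\mathrm{dom}\,\Lambda^{\ast}$; in a general locally convex space it is handled via Baldi's theorem and an accompanying approximation lemma in Dembo--Zeitouni. Your closing caveat about ``turning Gateaux differentiability into uniqueness of the Legendre-transform minimiser at $x$'' misidentifies the difficulty: uniqueness of $\lambda_{x}$ is not the issue, existence of $\lambda_{x}$ for a prescribed $x$ is.
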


For a measure $\nu \in \mathscr{M}_1( \R)$ we recall the relative entropy $H( \cdot | \nu ) : \mathscr{M}_1( \R) \rightarrow \R$ from Equation \eqref{EqDefinitionRelEntr} in Section 2.1, for which the following variational formula holds.

\begin{proposition}(Theorem 5.6 in \cite{LDGibsMeasRasoulAga2005})
\label{PropVarFormulaRelEntr}
Let $\mu, \nu \in \mathscr{M}_1( \R)$. Then
\begin{equation*}
H( \mu | \nu ) = \sup_{ \lambda \in C_b( \R )} \big \{  \mathbb{E}_{ \mu } [ \lambda ] - \log \mathbb{E}_{ \nu } [ e^{\lambda }]\big \}.
\end{equation*}
\end{proposition}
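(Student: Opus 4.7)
The plan is to prove the two inequalities in the Donsker--Varadhan identity separately: the easy direction $H(\mu|\nu) \geq \mathbb{E}_\mu[\lambda] - \log \mathbb{E}_\nu[e^\lambda]$ for every $\lambda \in C_b(\R)$, and the matching upper bound obtained by choosing $\lambda$ to approximate $\log(d\mu/d\nu)$.

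For the easy direction, fix $\lambda \in C_b(\R)$ and introduce the tilted probability measure $\nu_\lambda$ with density $d\nu_\lambda/d\nu = e^\lambda/Z_\lambda$, where $Z_\lambda := \int_\R e^\lambda \, d\nu \in (0,\infty)$ by boundedness of $\lambda$. A direct chain-rule computation in the case $\mu \ll \nu$ gives the identity
\[ H(\mu|\nu_\lambda) = H(\mu|\nu) - \mathbb{E}_\mu[\lambda] + \log Z_\lambda, \]
and this extends trivially to $\mu \not\ll \nu$. Since $H(\cdot|\nu_\lambda) \geq 0$ by Jensen's inequality applied to $-\log$, the desired inequality follows, and taking the supremum over $\lambda \in C_b(\R)$ gives one direction.

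For the matching upper bound I split into cases. If $\mu \not\ll \nu$, pick a Borel set $A$ with $\mu(A) > 0 = \nu(A)$, use outer regularity of $\nu$ together with Urysohn's lemma to produce continuous cutoffs $\varphi_\epsilon$ satisfying $\varphi_\epsilon \geq \mathbf{1}_A$ and $\int \varphi_\epsilon \, d\nu \to 0$; plugging $\lambda = c \varphi_\epsilon$ into the variational expression and sending $c \to \infty$ drives the supremum to $+\infty$, matching $H(\mu|\nu) = \infty$. If $\mu \ll \nu$, set $f := d\mu/d\nu$ and truncate via $f_n := (f \vee 1/n) \wedge n$, so that $\lambda_n := \log f_n$ is bounded. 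Approximating each $\lambda_n$ in $L^1(\mu) \cap L^1(\nu)$ by functions $\tilde\lambda_n \in C_b(\R)$ via Lusin's theorem plus mollification while preserving the uniform bounds, dominated convergence yields $\log \mathbb{E}_\nu[e^{\tilde\lambda_n}] \to \log \int f_n \, d\nu \to 0$ and $\mathbb{E}_\mu[\tilde\lambda_n] \to \int \log f_n \, d\mu \to H(\mu|\nu)$ as $n \to \infty$, pinning the supremum at $H(\mu|\nu)$ whether the latter is finite or infinite.

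The main obstacle will be the controlled passage from the natural but merely measurable (and possibly unbounded) integrand $\log(d\mu/d\nu)$ to an honest sequence in $C_b(\R)$. Truncation immediately yields boundedness, but the subsequent mollification must simultaneously keep $\int e^{\tilde\lambda_n} d\nu$ close to $\int f_n \, d\nu$ and $\int \tilde\lambda_n \, d\mu$ close to $\int \log f_n \, d\mu$, which demands an approximation scheme compatible with both $\mu$ and $\nu$ at once. The elementary inequality $x \log x \geq -e^{-1}$ provides a uniform lower bound that allows dominated convergence to handle the negative part of $\log f_n$ under $\mu$, while the crude upper bound $|\log f_n| \leq \log n$ controls both the positive part and the exponential moment under $\nu$ during the double passage to the limit. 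Once these are in place, combining both inequalities yields the variational formula.
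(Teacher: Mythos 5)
The paper does not actually prove this proposition: it is imported verbatim as Theorem 5.6 of \cite{LDGibsMeasRasoulAga2005}, so there is no in-paper argument to compare against. Your proposal is essentially the standard Donsker--Varadhan proof. The lower bound via the tilted measure $\nu_\lambda$ (chain-rule identity plus nonnegativity of $H(\cdot\,|\,\nu_\lambda)$, with no $\infty-\infty$ issues because $\lambda$ is bounded) is correct, and so is the truncation/Lusin scheme for the upper bound when $\mu\ll\nu$: the preserved uniform bound $|\tilde\lambda_n|\le\log n$ is exactly what lets $L^1(\nu)$-closeness control the exponential moment, $\log\int f_n\,d\nu\to 0$ follows from monotone convergence below and $f_n\le f+1/n$ above, and $\liminf_n\int\log f_n\,d\mu\ge H(\mu|\nu)$ follows from $x\log x\ge -e^{-1}$ together with $\mu(f\le 1/n)\le 1/n$.

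The one genuine flaw is in the case $\mu\not\ll\nu$. For a general Borel set $A$ with $\mu(A)>0=\nu(A)$ there need not exist continuous $\varphi_\eps\ge\mathbf{1}_A$ with $\int\varphi_\eps\,d\nu$ small: continuity forces $\varphi_\eps\ge\mathbf{1}_{\overline{A}}$, and $\nu(\overline{A})$ can equal $1$ (take $\mu$ Lebesgue on $[0,1]$, $\nu$ purely atomic on the rationals of $[0,1]$, and $A$ the irrationals). The repair is standard and short: by inner regularity of $\mu$ choose a compact $K\subseteq A$ with $\mu(K)>0$, by outer regularity of $\nu$ an open $U\supseteq K$ with $\nu(U)<\eps$, and apply Urysohn between $K$ and $U^c$ to obtain a continuous $\varphi$ with $\mathbf{1}_K\le\varphi\le\mathbf{1}_U$; then
\begin{equation*}
\mathbb{E}_\mu[c\varphi]-\log\mathbb{E}_\nu\big[e^{c\varphi}\big]\;\ge\; c\,\mu(K)-\log\big(1+e^{c}\eps\big),
\end{equation*}
and letting $\eps\to 0$ for fixed $c$ and then $c\to\infty$ drives the supremum to $+\infty$, matching $H(\mu|\nu)=\infty$. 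With this one-line fix your proof is complete.
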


When working with the space of probability measures $\mathscr{M}_1( \R )$, the notion of tightness plays a major role.
\begin{df}
	A subset $L$ of $ \mathscr{M}_1(\R)$ is called tight if, for all $\eps > 0$, there exists a compact set $K_{ \eps } \subseteq \R$ such that 
	\begin{equation*}
	\sup_{ \mu \in L} \mu( K_{ \eps }) \geq 1 - \eps .
	\end{equation*}
\end{df}

The following result is known as Gibbs conditioning principle, one of the fundamental tools in the theory of large deviations. It will be used in the proof of Proposition \ref{PropMainRes}, which is itself used in the proof of Theorem \ref{ThmLimCondDist}.

\begin{proposition}(Gibbs conditioning principle, Theorem 7.1 in \cite{LeonardEntrProj2010})
	\label{PropGibbsCond}
	Let $X$ be a topological space, and let $( \xi_n)_{n \in \N}$ be a sequence of $X$-valued random variables that satisfies an LDP in $X$ with GRF $\I$. In addition, let $F \subseteq X$ be a subset that satisfies
	\begin{itemize}
		\item[] (1) $ \I(F) := \inf_{ x \in F} \I(x) < \infty  $;
		\item[] (2) $ F$ is closed;
		\item[] (3) $F = \bigcap_{ \eps  > 0}  F_{ \eps }$ for a family of sets $( F_{ \eps })_{ \eps > 0} $ such that $F_{\eps} \subseteq X $ for $\eps > 0$ and $\prb [ \xi_n  \in F_{ \eps } ]> 0$ for all $ \eps > 0 $ and $n \in \N $;
		\item[] (4) $F \subseteq ( F_{ \eps } )^\circ$ for all $\eps > 0$.
	\end{itemize}
	Let $\mathcal{M}_F$ be the set of $x \in F$ which minimize $\I $. That is,
	\begin{equation*}
	\mathcal{M}_F := \{ x \in F : \I(x) = \I(F )  \}.
	\end{equation*}
	Then, for all open $G \subseteq X$ such that $\mathcal{M}_F \subseteq G$, we have
	\begin{equation*}
	\limsup_{ \eps \rightarrow 0 } \limsup_{ n \rightarrow \infty } \frac{1}{n} \log \prb [ \xi_n \notin G \ | \ \xi_n \in F_{ \eps } ] < 0. 
	\end{equation*} 
	As a consequence, if $\mathcal{M}_F = \{ \overline{x} \}$ is a singleton, then 
	\begin{equation}
	\label{EqGibbsCondPrSingl}
	\lim_{ \eps \rightarrow 0} \lim_{n \rightarrow \infty } \prb[ \xi_n \in \cdot | \xi_n \in F_{ \eps } ] = \delta_{ \overline{x} }.
	\end{equation}
	
\end{proposition}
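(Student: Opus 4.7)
The plan is to express the conditional probability as a ratio, apply the LDP upper bound to the numerator and the LDP lower bound to the denominator, and then exploit the strict separation between the minimizing set $\mathcal{M}_F$ and $G^c$. Writing
\[
\prb[\xi_n \notin G \mid \xi_n \in F_\eps] = \frac{\prb[\xi_n \in F_\eps \cap G^c]}{\prb[\xi_n \in F_\eps]},
\]
the LDP upper bound applied to the closed set $\overline{F_\eps} \cap G^c$ yields
\[
\limsup_{n \to \infty} \frac{1}{n} \log \prb[\xi_n \in F_\eps \cap G^c] \leq -\inf_{x \in \overline{F_\eps} \cap G^c} \mathbb{I}(x),
\]
while assumption (4), $F \subseteq (F_\eps)^\circ$, together with the LDP lower bound applied to the open set $(F_\eps)^\circ$ produces
\[
\liminf_{n \to \infty} \frac{1}{n} \log \prb[\xi_n \in F_\eps] \geq -\inf_{x \in (F_\eps)^\circ} \mathbb{I}(x) \geq -\inf_{x \in F} \mathbb{I}(x) = -\mathbb{I}(F).
\]
Combining the two gives
\[
\limsup_{n \to \infty} \frac{1}{n} \log \prb[\xi_n \notin G \mid \xi_n \in F_\eps] \leq \mathbb{I}(F) - \inf_{x \in \overline{F_\eps} \cap G^c} \mathbb{I}(x).
\]

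Next I would let $\eps \to 0$ and argue that the right-hand side is strictly negative. By assumption (3) one has $F = \bigcap_{\eps > 0} F_\eps$ (one may replace $F_\eps$ by $\bigcap_{\eps' \geq \eps} F_{\eps'}$ to obtain a decreasing family without losing the hypotheses), and by assumption (2) $F$ is closed. The goodness of $\mathbb{I}$ then enables a standard compactness argument: take any sequence of near-minimizers $x_\eps \in \overline{F_\eps} \cap G^c$ with $\mathbb{I}(x_\eps)$ bounded; all of them lie in a common compact sublevel set $\{\mathbb{I} \leq \mathbb{I}(F) + 1\}$, so along a subsequence they converge to some $x^\ast$, which lies in $G^c$ (closed) and in every $\overline{F_{\eps'}}$ for $\eps'>0$, hence in $F$. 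Lower semicontinuity then forces
\[
\liminf_{\eps \to 0} \inf_{x \in \overline{F_\eps} \cap G^c} \mathbb{I}(x) \geq \inf_{x \in F \cap G^c} \mathbb{I}(x).
\]
Since $\mathcal{M}_F \subseteq G$, the closed set $F \cap G^c$ contains no minimizer of $\mathbb{I}$ on $F$; combined with compactness of the sublevel sets and lower semicontinuity, this gives the strict inequality $\inf_{F \cap G^c} \mathbb{I} > \mathbb{I}(F)$, completing the first part.

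For the singleton consequence \eqref{EqGibbsCondPrSingl}, I would apply the first part with $G$ ranging over a neighbourhood base of $\overline{x}$: this shows $\prb[\xi_n \in G \mid \xi_n \in F_\eps] \to 1$ in the double limit, and a tightness argument for the conditional laws (again leveraging that the sublevel sets of $\mathbb{I}$ are compact) together with the portmanteau theorem identifies $\delta_{\overline{x}}$ as the unique weak limit point. The main obstacle I expect is the interchange $\lim_{\eps \to 0} \inf_{\overline{F_\eps} \cap G^c} \mathbb{I} \to \inf_{F \cap G^c} \mathbb{I}$: one needs the limit of a near-minimizer to actually land in $F$, which requires nestedness of the $F_\eps$ and careful use of the fact that closures $\overline{F_\eps}$, rather than the $F_\eps$ themselves, enter the upper bound. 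Everything else is routine LDP bookkeeping.
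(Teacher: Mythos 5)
The paper offers no proof of this proposition; it is imported verbatim from L\'eonard (Theorem 7.1 in \cite{LeonardEntrProj2010}), so there is nothing in-text to compare against. Your argument is the standard one for Gibbs conditioning: write the conditional probability as a ratio, bound the numerator via the LDP upper bound on the closed set $\overline{F_\eps}\cap G^c$, bound the denominator from below via the LDP lower bound on $(F_\eps)^\circ\supseteq F$, and then separate $\inf_{F\cap G^c}\I$ strictly from $\I(F)$ using goodness, lower semicontinuity and $\mathcal{M}_F\subseteq G$. That skeleton is sound, including the strictness step (if $\inf_{F\cap G^c}\I=\I(F)$, a minimizing net in the compact sublevel set $\{\I\leq\I(F)+1\}$ would have a limit in the closed set $F\cap G^c$ realizing the minimum, contradicting $\mathcal{M}_F\subseteq G$; and $F\cap G^c=\emptyset$ is trivial). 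The singleton consequence also needs no separate tightness argument: $\prb[\xi_n\in G\mid\xi_n\in F_\eps]\to1$ for every open $G\ni\overline{x}$ is already weak convergence to $\delta_{\overline{x}}$.

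The one genuine gap is exactly at the point you yourself flag as the ``main obstacle'': the interchange $\liminf_{\eps\to0}\inf_{\overline{F_\eps}\cap G^c}\I\geq\inf_{F\cap G^c}\I$. Your limit point $x^*$ of near-minimizers lies in $\bigcap_{\eps>0}\overline{F_\eps}\cap G^c$, and you conclude ``hence in $F$''. But hypothesis (3) only gives $F=\bigcap_\eps F_\eps$, and an intersection of closures can be strictly larger than the (closure of the) intersection, so $x^*\in F$ does not follow from the stated assumptions. Moreover, your proposed repair for the missing nestedness --- replacing $F_\eps$ by $\bigcap_{\eps'\geq\eps}F_{\eps'}$ --- is not harmless: an infinite intersection of sets each containing $F$ in its interior need not contain $F$ in its interior, so hypothesis (4) may be lost, and the positivity in (3) may be lost as well. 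What is actually needed (and what L\'eonard's theorem tacitly carries) is that the $F_\eps$ be closed and decreasing, so that $\bigcap_\eps\overline{F_\eps}=\bigcap_\eps F_\eps=F$ and the subnet argument goes through. In the paper's only application, Proposition \ref{PropMainRes}, one has $F_\eps=m_{V_2}^{-1}([-\eps,R+\eps])$, which is closed and nested, so the defect is inherited from the imprecise statement rather than fatal in context; but as a proof of the proposition as literally stated, this step is missing and should either be supplied under the strengthened hypotheses or the hypotheses amended.
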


In the proof of Proposition \ref{PropMainRes} we shall make use of Proposition \ref{propPropchaos}, which relies on \cite[Proposition 2.1]{PropOfChaos}. We start with a definition, where we adapt the notation from \cite{PropOfChaos} in order to stay consistent with the notation used in this article.

\begin{df}
\label{Defmuchaotic}
Let $( \mu_{\eps }^n)_{n \in \N, \eps > 0}$ be a family of probability measures, where for each $\eps > 0$ and each $n \in \N$, $\mu_{\eps }^n$ is a symmetric distribution on $\R^n$. We say that $ ( \mu_{\eps }^n)_{n \in \N, \eps > 0}$ is $\mu$-chaotic if there exists $\mu \in \mathscr{M}_1( \R )$ such that, for fixed $k \in \N$ and $ \phi_1,...,\phi_k \in C_b( \R)$, we have that
\begin{equation}
\label{EqMuChaoticlimit}
\lim_{ \eps \rightarrow 0} \lim_{n \rightarrow \infty} \int_{ \R^k} \prod_{i=1}^{k} \phi_i(x_i)  \mu_{\eps }^{n,k}(dx) = \prod_{i=1}^{k} \int_{ \R} \phi_i(x)  \mu ( dx),
\end{equation}
where
$\mu_{ \eps }^{n,k}$ denotes the marginal distribution of the $k$ first coordinates of $\mu_{\eps }^n$.
\end{df}

\begin{rem}
\label{RemWeakconvergence}
In particular, the limit in Equation \eqref{EqMuChaoticlimit} implies weak convergence of the family $( \mu^{n,k}_{ \eps })_{n \in \N, \eps > 0}$ towards $ \mu^{\otimes k}$, where the latter denotes the $k$-fold product measure of $\mu$.
\end{rem}

We have the following characterization of the notion defined in Definition \ref{Defmuchaotic}.

\begin{proposition}
\label{propPropchaos}
Let $( \mu^{n,k}_{ \eps })_{n \in \N, \eps > 0}$ be a family of symmetric probability measures, where for each $\eps > 0$ and $n \in \N$, $\mu_{ \eps }^n$ is a symmetric distribution on $\R^n$. For all $\eps > 0$ and $n \in \N$, let $X_{ \eps }^n = (X_{\eps }^n(1),...,X_{ \eps }^n(n)) \in \R^n$ be distributed according to $\mu_{ \eps }^n$. Then, the family $( \mu^{n,k}_{ \eps })_{n \in \N, \eps > 0}$ is $\mu$-chaotic if and only if there exists $\mu \in \mathscr{M}_1(\R)$ such that
\begin{equation}
\label{EqLimitEmpMeasuresMuChaotic}
\frac{1}{n} \sum_{ i=1}^n \delta_{ X_{ \eps }^{n}(i)} \stackrel{d}{\longrightarrow} \mu ,
\end{equation} 
as $n \rightarrow \infty$ followed by $\eps \rightarrow 0$. 
\end{proposition}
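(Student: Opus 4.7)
The plan is to follow the standard exchangeability-based argument underlying \cite[Proposition 2.1]{PropOfChaos}, carefully adapted to the iterated-limit setup used here. The central identity on which both implications rest relates moments of the empirical measure $L_\eps^n := \frac{1}{n}\sum_{i=1}^n \delta_{X_\eps^n(i)}$ to the $k$-marginal: for any $\phi_1,\ldots,\phi_k \in C_b(\R)$, the symmetry of $\mu_\eps^n$ yields
\[
\mathbb{E}\Big[\prod_{j=1}^{k}\langle L_\eps^n,\phi_j\rangle\Big]
= \frac{n(n-1)\cdots(n-k+1)}{n^k}\int_{\R^k}\prod_{j=1}^{k}\phi_j(x_j)\,\mu_\eps^{n,k}(dx) + O\!\Big(\tfrac{1}{n}\prod_{j=1}^{k}\|\phi_j\|_\infty\Big),
\]
obtained by splitting the multi-index sum $\frac{1}{n^k}\sum_{i_1,\ldots,i_k}\mathbb{E}\prod_j \phi_j(X_\eps^n(i_j))$ into terms with pairwise distinct indices (which by exchangeability all equal the integral against $\mu_\eps^{n,k}$) and diagonal terms (of which there are only $O(n^{k-1})$).

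For the forward direction, I would combine this identity with Definition \ref{Defmuchaotic} to deduce that, for all $k\in\N$ and all $\phi_j \in C_b(\R)$,
\[
\lim_{\eps \to 0}\lim_{n \to \infty} \mathbb{E}\Big[\prod_{j=1}^{k}\langle L_\eps^n,\phi_j\rangle\Big] = \prod_{j=1}^{k}\langle \mu,\phi_j\rangle.
\]
The family of functionals $\nu \mapsto \prod_j \langle \nu, \phi_j\rangle$ with $\phi_j \in C_b(\R)$ is a point-separating subalgebra of $C_b(\mathscr{M}_1(\R))$; coupled with tightness of the laws of $L_\eps^n$ (inherited from tightness of the one-dimensional marginals $\mu_\eps^{n,1}$, which converge weakly to $\mu$ by the $k=1$ case of the moment convergence), this upgrades to weak convergence of the laws of $L_\eps^n$ to the Dirac mass $\delta_\mu$ on $\mathscr{M}_1(\R)$ in the iterated limit, which is equivalent to \eqref{EqLimitEmpMeasuresMuChaotic}.

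Conversely, if \eqref{EqLimitEmpMeasuresMuChaotic} holds then, because the limit $\delta_\mu$ is a Dirac mass, weak convergence is equivalent to convergence in probability of $L_\eps^n$ to the deterministic $\mu$. Applying the continuous mapping theorem to the bounded continuous functional $\nu \mapsto \prod_{j=1}^k\langle \nu,\phi_j\rangle$ on $\mathscr{M}_1(\R)$, together with the uniform bound $\prod_j\|\phi_j\|_\infty$ and dominated convergence, gives $\mathbb{E}\big[\prod_j \langle L_\eps^n,\phi_j\rangle\big] \to \prod_j \langle \mu,\phi_j\rangle$ in the iterated limit. Plugging this into the identity from the first paragraph, and using $\frac{n(n-1)\cdots(n-k+1)}{n^k}\to 1$ together with the $O(1/n)$ remainder, yields the defining limit \eqref{EqMuChaoticlimit} of $\mu$-chaos.

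The main obstacle I anticipate is the passage, in the forward direction, from convergence of polynomial moments of $L_\eps^n$ to genuine weak convergence of its law on $\mathscr{M}_1(\R)$ in the iterated-limit sense. The cleanest route is to fix $\eps$, extract from tightness a weakly convergent subsequence of the laws of $L_\eps^n$, identify each such weak limit via the moment identity with (a law concentrated near) $\delta_\mu$, and then commute the two limits through a diagonal argument that exploits the uniformity of the $O(1/n)$ remainder in $\eps$. The symmetry of every marginal of $\mu_\eps^n$ is what allows the same identity to be used for every $k$ simultaneously, keeping the argument structurally identical in the two directions.
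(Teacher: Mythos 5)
Your argument is correct and is essentially the one the paper uses: its proof simply defers to Sznitman's Proposition 2.2, whose core is exactly your exchangeability-based identity relating $\mathbb{E}\big[\prod_{j}\langle L^n_{\eps},\phi_j\rangle\big]$ to the $k$-marginal plus an $O(1/n)$ diagonal term. One streamlining remark: in the forward direction the classical proof needs only the cases $k=1,2$ of the chaos hypothesis to show $\mathrm{Var}\big(\langle L^n_{\eps},\phi\rangle\big)\to 0$, hence convergence in probability of $\langle L^n_{\eps},\phi\rangle$ to $\langle\mu,\phi\rangle$ for each $\phi\in C_b(\R)$, which yields \eqref{EqLimitEmpMeasuresMuChaotic} directly and bypasses the Stone--Weierstrass and tightness step you identify as the main obstacle.
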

\begin{proof}
Similar to the proof of \cite[Proposition 2.2]{PropOfChaos}.
\end{proof}

\subsection{The generalized Wasserstein spaces} 

Let $ \mathscr{M} _1( \R )$ be the set of probability measures on the Borel $\sigma$-algebra on $\R $ and $V$ be an Orlicz function and recall the following quantities from Section 2. 

\begin{df}
	For $\mu \in \mathscr{M} _1( \R )$, we define
	\begin{equation}
	\label{DefGenWassSpace} 
	m_V( \mu ) := \int_{\R} V(x)  \mu (dx) \quad \text{ and } \quad \mathscr{M}^V_1( \R ):= \Big \{ \mu \in \mathscr{M}_1( \R )  \  : \ m_V(\mu ) < \infty \Big  \}. 
	\end{equation}
	In case of $V(x)=|x|^p $ for some $p \in [1,\infty)$, we write $m_p := m_V$ and $\mathscr{M}_1^p(\R) := \mathscr{M}_1^V(\R)$.
	For a metric $d_{ w} $ inducing weak convergence on $\mathscr{M}_1( \R )$, we can define a metric $d_V$ on $\mathscr{M}^V_1( \R )$ with
	\begin{equation}
	\label{DefGenWassMetric}
	d_V( \mu ,\nu ) := d_{ w} ( \mu , \nu ) + \big |   m_V( \mu ) - m_V( \nu ) \big |.
	\end{equation}
\end{df}

\begin{rem}
	\label{RemContinuityVthmomentmap}
	By the very definition of the metric $d_V$ on $\mathscr{M}^V_1$, the mapping $m_V : \mathscr{M}^V_1( \R ) \rightarrow \R $ is a continuous one.
\end{rem}
\begin{rem}
	In case of $V(x) = |x|^p$, for some $ p \in [1, \infty)$, the space $(\mathscr{M}^V_1( \R ), d_V)$ becomes the $p$-Wasserstein space, which is complete and separable (see, e.g, Chapter 6 in \cite{OptTransport2009}).
\end{rem}

Note that for general Orlicz functions $V$, it is not claimed that $(\mathscr{M}^V_1( \R ), d_V)$ is Polish, but we have the following result which suffices for our purposes.

\begin{proposition}(Lemma C.1 in \cite{DLRLDGibbsMeas2020})
	\label{PropGenWassSpace}
	Let $V$ be an Orlicz function and $\mathscr{M}^V_1( \R ) $ and $d_V$ be defined as in \eqref{DefGenWassSpace} and \eqref{DefGenWassMetric}, respectively. Then, for a sequence $( \mu_n)_{ n \in \N} \subseteq \mathscr{M}^V_1( \R )$ and some $ \mu \in \mathscr{M}^V_1( \R )$, we have $d_V( \mu_n, \mu ) \rightarrow 0$ as $n \rightarrow \infty$ if and only if
	\begin{equation}
	\label{EqCondGenWassConvergence}
	d_{ w}( \mu_n, \mu ) \rightarrow 0 \quad \text{and} \quad \lim_{ r \rightarrow \infty} \sup_{n \in \N } \int_{|x| > r} V(x)  \mu_n(dx) \rightarrow 0 \quad \text{as} \quad  n \rightarrow \infty.  
	\end{equation}
	Furthermore, the metric space $(\mathscr{M}^V_1( \R ), d_V)$ is separable.
\end{proposition}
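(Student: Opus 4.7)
The plan is to prove the equivalence via a truncation argument, the natural continuous device being a cut-off $\psi_r\in C_b(\R)$ with $0\leq\psi_r\leq 1$, $\psi_r\equiv 1$ on $[-r,r]$, and $\psi_r\equiv 0$ on $\R\setminus[-r-1,r+1]$. Since $V$ is continuous, the product $V\psi_r$ lies in $C_b(\R)$, so weak convergence of any sequence of measures to $\mu$ implies convergence of the integrals $\int V\psi_r\,d\mu_n$ to $\int V\psi_r\,d\mu$. This is the one calculation underlying both implications.

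For the forward implication, assume $d_V(\mu_n,\mu)\to 0$, so in particular $d_w(\mu_n,\mu)\to 0$ and $m_V(\mu_n)\to m_V(\mu)$. Writing $\int V(1-\psi_r)\,d\mu_n = m_V(\mu_n) - \int V\psi_r\,d\mu_n$, one obtains, for every fixed $r>0$,
\begin{equation*}
\lim_{n\to\infty}\int V(1-\psi_r)\,d\mu_n \;=\; \int V(1-\psi_r)\,d\mu.
\end{equation*}
Fix $\varepsilon>0$. By monotone convergence applied to $\mu$, choose $r_0$ with $\int V(1-\psi_{r_0})\,d\mu<\varepsilon/2$; then there exists $N$ with $\int V(1-\psi_{r_0})\,d\mu_n<\varepsilon$ for all $n\geq N$. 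The finitely many measures $\mu_1,\dots,\mu_{N-1}$ lie in $\mathscr{M}_1^V(\R)$, so each of them has vanishing $V$-tail as $r\to\infty$, and we can enlarge $r_0$ to handle them as well. Since $\mathbbm{1}_{\{|x|>r_0+1\}}\leq 1-\psi_{r_0}$, this gives the uniform tail condition in \eqref{EqCondGenWassConvergence}.

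For the reverse implication, assume $d_w(\mu_n,\mu)\to 0$ together with the uniform $V$-tail condition; we must show $m_V(\mu_n)\to m_V(\mu)$. Split
\begin{equation*}
|m_V(\mu_n)-m_V(\mu)|\leq \Bigl|\!\int V\psi_r\,d\mu_n-\!\int V\psi_r\,d\mu\Bigr|+\!\int V(1-\psi_r)\,d\mu_n+\!\int V(1-\psi_r)\,d\mu.
\end{equation*}
The first term vanishes as $n\to\infty$ for each fixed $r$ by weak convergence against the bounded continuous function $V\psi_r$. Taking $\limsup_n$ and then $r\to\infty$, the second term vanishes by the uniform tail hypothesis (since $1-\psi_r\leq \mathbbm{1}_{\{|x|>r\}}$) and the third by dominated convergence applied to $\mu\in\mathscr{M}_1^V(\R)$. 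This yields $m_V(\mu_n)\to m_V(\mu)$ and hence $d_V(\mu_n,\mu)\to 0$.

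For separability, let $D$ be the countable set of probability measures of the form $\sum_{i=1}^N q_i\delta_{x_i}$ with $N\in\N$, $x_i\in\Q$, and rational weights $q_i\geq 0$ summing to $1$; each such measure has finite $V$-moment because $V$ is locally bounded. Given $\mu\in\mathscr{M}_1^V(\R)$, first replace $\mu$ by its pushforward under the truncation $x\mapsto (-R\vee x)\wedge R$ for $R$ large, which is close to $\mu$ in $d_V$ because the $V$-tail of $\mu$ vanishes as $R\to\infty$ (so that both $d_w$ and the $V$-moments converge). The truncated measure is compactly supported, hence arbitrarily well approximated in the weak topology by elements of $D$ supported on $[-R,R]\cap\Q$; on such a bounded set the $V$-moment map $\mu'\mapsto m_V(\mu')$ is automatically continuous with respect to weak convergence, so the approximation is also in $d_V$. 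The main technical point throughout is the interplay between weak convergence and the tail control of the (generally unbounded) function $V$, which is exactly what the cut-off $\psi_r$ is designed to mediate.
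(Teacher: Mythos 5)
The paper does not prove this statement; it is quoted verbatim from Lemma C.1 of \cite{DLRLDGibbsMeas2020}, so there is no in-paper argument to compare against. Your truncation proof via the cut-offs $\psi_r$ is the standard route and is correct in both directions, including the point that the finitely many measures $\mu_1,\dots,\mu_{N-1}$ are handled by enlarging $r_0$ (legitimate, since the tail integrals are monotone in $r$), and the separability argument via rational discrete measures plus the observation that $m_V$ coincides with integration against $V\psi_{R+1}\in C_b(\R)$ on measures supported in $[-R-1,R+1]$ is sound. The only cosmetic quibble is that the convergence $\int V(1-\psi_r)\,d\mu\to 0$ is most cleanly justified by dominated convergence (with dominating function $V\in L^1(\mu)$) rather than monotone convergence, though the latter also applies to the decreasing integrands here.
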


\begin{proposition}
	\label{PropCompactSetpWasserst}
	Let $M \in (0, \infty )$, then the set $ K_1^M := \{ \nu \in \mathscr{M}_1(\R) : \int_{ \R} |x| d \nu( x)  \leq M \}$ is compact in the weak topology on $\mathscr{M}_1( \R )$.
\end{proposition}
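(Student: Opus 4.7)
The plan is to apply Prokhorov's theorem: I will verify that $K_1^M$ is tight (hence relatively compact in the weak topology) and separately show it is weakly closed, so that it is compact.

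For tightness, let $\varepsilon > 0$ and set $R := M/\varepsilon$. Markov's inequality gives, for every $\nu \in K_1^M$,
\begin{equation*}
\nu(\{|x| > R\}) \leq \frac{1}{R}\int_\R |x|\, \nu(dx) \leq \frac{M}{R} = \varepsilon,
\end{equation*}
so the compact set $[-R,R]$ witnesses tightness uniformly over $K_1^M$. By Prokhorov's theorem, every sequence in $K_1^M$ has a weakly convergent subsequence.

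For weak closedness, let $(\nu_n)_{n\in\N} \subseteq K_1^M$ converge weakly to some $\nu \in \mathscr{M}_1(\R)$. For each $R>0$ the truncated map $f_R(x) := \min(|x|, R)$ is bounded and continuous, so
\begin{equation*}
\int_\R f_R(x)\, \nu(dx) = \lim_{n\to\infty} \int_\R f_R(x)\, \nu_n(dx) \leq \limsup_{n\to\infty} \int_\R |x|\, \nu_n(dx) \leq M.
\end{equation*}
Since $f_R \uparrow |x|$ as $R\to\infty$, monotone convergence yields $\int_\R |x|\, \nu(dx) \leq M$, so $\nu \in K_1^M$. Combined with the relative compactness from tightness, this shows that $K_1^M$ is sequentially compact, which (since the weak topology on $\mathscr{M}_1(\R)$ is metrizable, e.g.\ by the L\'evy–Prokhorov metric) is equivalent to compactness.

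I do not expect a real obstacle here: the only subtle step is ensuring that the first absolute moment is lower semicontinuous under weak convergence, which is exactly what the truncation argument via $f_R$ and monotone convergence handles. Everything else is a direct application of Markov's inequality and Prokhorov's theorem.
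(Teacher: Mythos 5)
Your proof is correct and follows essentially the same route as the paper, which simply defers to Lemma 3.14 of Kim--Ramanan and spells out the identical argument (tightness via Markov plus Prokhorov, then weak closedness via truncation and monotone convergence) for the Orlicz analogue in Proposition \ref{PropCompactSetGenWasserst}. No issues.
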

\begin{proof}
	The proof is analogue to the proof of Lemma 3.14 in \cite{RKConLim}.
\end{proof}

\begin{proposition}
	\label{PropCompactSetGenWasserst}
	Let $V_1, V_2$ be two Orlicz functions with
	\begin{equation*}
	\frac{V_1(x)}{V_2(x) } \longrightarrow \infty \quad \text{as} \quad |x| \rightarrow \infty .
	\end{equation*}
	Then, for $M \in (0,\infty)$, the set $K_{V_1}^M := \big \{ \nu \in \mathscr{M}_1( \R ) : m_{V_1}( \nu  ) \leq M \big \}$ is compact in the $d_{V_2}$-topology on $\mathscr{M}^{V_2}_1( \R ) $.
\end{proposition}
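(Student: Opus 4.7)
The plan is to show sequential compactness in the metric space $(\mathscr{M}_1^{V_2}(\R),d_{V_2})$, using the characterisation of $d_{V_2}$-convergence provided by Proposition \ref{PropGenWassSpace}. Since every Orlicz function is convex, symmetric, vanishing only at $0$ and positive elsewhere, $V_1$ is non-decreasing on $[0,\infty)$ and $V_1(x)\to\infty$ as $|x|\to\infty$; in particular $V_1$ has compact sublevel sets. Markov's inequality then gives, for every $\nu\in K_{V_1}^M$ and every $r>0$,
\begin{equation*}
\nu\bigl(\{x\in\R: |x|>r\}\bigr)\ \leq\ \frac{m_{V_1}(\nu)}{V_1(r)}\ \leq\ \frac{M}{V_1(r)},
\end{equation*}
so $K_{V_1}^M$ is tight in $(\mathscr{M}_1(\R),d_w)$.

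Next I would take an arbitrary sequence $(\nu_n)_{n\in\N}\subseteq K_{V_1}^M$. By tightness and Prokhorov's theorem, after passing to a subsequence (still denoted $(\nu_n)$) there exists $\nu\in\mathscr{M}_1(\R)$ with $\nu_n\xrightarrow{d}\nu$. Since $V_1$ is continuous and non-negative, the Portmanteau theorem (applied via lower semicontinuity of $\mu\mapsto\int V_1 d\mu$ under weak convergence) yields
\begin{equation*}
m_{V_1}(\nu)\ \leq\ \liminf_{n\to\infty} m_{V_1}(\nu_n)\ \leq\ M,
\end{equation*}
so $\nu\in K_{V_1}^M$. This shows $K_{V_1}^M$ is weakly closed, and it is already a subset of $\mathscr{M}_1^{V_2}(\R)$ because each of its elements has finite $V_2$-moment, as will follow from the next step.

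The decisive step is to upgrade weak convergence of the chosen subsequence to $d_{V_2}$-convergence, and the only non-routine input is the hypothesis $V_1(x)/V_2(x)\to\infty$ as $|x|\to\infty$. Given $\eps>0$, choose $r_0>0$ so large that $V_2(x)\leq \eps\,V_1(x)$ for all $|x|>r_0$. Then for every $r\geq r_0$ and every $n\in\N$,
\begin{equation*}
\int_{\{|x|>r\}} V_2(x)\,\nu_n(dx)\ \leq\ \eps\int_{\{|x|>r\}} V_1(x)\,\nu_n(dx)\ \leq\ \eps\, m_{V_1}(\nu_n)\ \leq\ \eps M.
\end{equation*}
Combined with the fact that $V_2$ is bounded on $[-r_0,r_0]$ by continuity, this proves $\nu_n\in\mathscr{M}_1^{V_2}(\R)$ and
\begin{equation*}
\lim_{r\to\infty}\sup_{n\in\N}\int_{\{|x|>r\}} V_2(x)\,\nu_n(dx)\ =\ 0.
\end{equation*}
The same estimate applied to the limit $\nu$ (using lower semicontinuity again for the $V_1$-moment) gives $\nu\in\mathscr{M}_1^{V_2}(\R)$. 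Together with $\nu_n\xrightarrow{d}\nu$, Proposition \ref{PropGenWassSpace} yields $d_{V_2}(\nu_n,\nu)\to 0$.

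Hence every sequence in $K_{V_1}^M$ has a subsequence converging in $d_{V_2}$ to an element of $K_{V_1}^M$; since $(\mathscr{M}_1^{V_2}(\R),d_{V_2})$ is a metric space, $K_{V_1}^M$ is compact in this topology. The step I expect to be most delicate is the uniform $V_2$-tail estimate, where the full strength of the domination $V_1/V_2\to\infty$ is needed; without it one only gets uniform integrability for $V_2/V_1$-type weights and cannot conclude $d_{V_2}$-convergence.
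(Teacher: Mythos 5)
Your proposal is correct and follows essentially the same route as the paper: tightness plus Prokhorov for weak precompactness, weak closedness of $K_{V_1}^M$ (you via lower semicontinuity of $m_{V_1}$, the paper via truncation and monotone convergence — the same idea), and then the uniform $V_2$-tail bound $\int_{|x|>r}V_2\,d\nu_n\leq\sup_{|x|\geq r}(V_2/V_1)\cdot M$ combined with Proposition \ref{PropGenWassSpace} to upgrade to $d_{V_2}$-convergence. No gaps.
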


\begin{proof}
	Since the function $V_1 : \R \rightarrow [0,\infty)$ is convex, it has compact level sets. Thus, by Lemma 3.2 in \cite{DLRLDGibbsMeas2020}, $K_{V_1}^M$ is tight. By Prohorov's Theorem (see, e.g., Theorem D.9 in \cite{DZ2011}), $K_{V_1}^M$ is precompact in the weak topology. To see that $K_{V_1}^M$ is also closed in the weak topology, we take a sequence $(\mu_n)_{n \in \N} \subseteq K_{V_1}^M$ with $ \mu_n \rightarrow \mu \in \mathscr{M}_1( \R )$ weakly. Then, for fixed $C \in (0,\infty)$, we get
	\begin{align*}
	M & \geq \int_{\R } V_1(x)  \mu_n(dx) \\
	& \geq \int_{\R } \max \{  C, V_1(x) \}  \mu_n(dx) \longrightarrow \int_{\R } \max \{  C, V_1(x) \} \mu(dx) \quad \text{as} \quad n \rightarrow \infty,
	\end{align*}
	where the convergence in the second line holds since $ \max \{ C, V_1 \}$ is continuous and bounded. By monotone convergence, we get
	\begin{equation*}
	M \geq \lim_{ C \rightarrow \infty } \int_{\R } \max \{  C, V_1(x) \} \mu(dx) = \int_{\R }  V_1(x)   \mu(dx).
	\end{equation*}
	Therefore, $\mu \in K_{V_1}^M$, which shows weak compactness of $K_{V_1}^M$. Now take a sequence $( \mu_n )_{n \in \N} \subseteq K_{V_1}^M$. Then, by weak compactness, there exists a subsequence $( \mu_{n_k })_{ k \in \N}$ and $ \mu \in K_{V_1}^M$ such that $\mu_{n_k} \rightarrow \mu $ weakly. We claim that $d_{V_2}( \mu_{n_k} , \mu ) \rightarrow 0$ as $k \rightarrow \infty $ (see Equation \eqref{DefGenWassMetric} for the definition of this metric). In order to show that, we work with the characterization of convergence in $\mathscr{M}^{V_2}_1( \R )$ given in Proposition \ref{PropGenWassSpace}. We get
	\begin{align*}
	\lim_{r \rightarrow \infty } \sup_{k \in \N} \Big \{ \int_{ |x| > r } V_2(x)  \mu_{ n_k}(dx)   \Big \} & = \lim_{r \rightarrow \infty } \sup_{k \in \N} \Big \{ \int_{ |x| > r } \frac{V_2(x)}{V_1(x)} V_1(x)   \mu_{ n_k}(dx)   \Big \} \\
	& \leq \lim_{r \rightarrow \infty } \sup_{ |x| \geq r} \frac{V_2(x)}{V_1(x)} \sup_{k \in \N} \Big \{ \int_{ |x| > r }  V_1(x)  \mu_{ n_k}(dx)   \Big \} \\
	& \leq M \limsup_{ x \rightarrow \infty} \frac{V_2(x)}{V_1(x)} = 0,
	\end{align*}
	where we used that $ ( \mu_{n_k})_{ k \in \N} \subseteq K_{V_1}^M$ and $ \lim_{ |x| \rightarrow \infty } V_2(x) / V_1(x) = 0$. Thus, we have shown that every sequence in $K_{V_1}^M$ has a $d_{V_2}$ convergent subsequence, which implies that $K_{V_1}^M$ is compact in $\mathscr{M}^{V_2}_1( \R )$. 
\end{proof}

\section{Proof of Theorem \ref{ThmLDPEmpMeasuresOrlicz}}

We shall now continue and present the proof of the Sanov-type large deviation principle presented in Theorem \ref{ThmLDPEmpMeasuresOrlicz}.

\begin{lem}
	\label{AuxLemmaExpTilt}
	Let $V $ be an Orlicz function and let $R \in (0,\infty)$ . Define $\varphi_V: (-\infty,0) \times C_b(\R) \rightarrow \R $ as
	\begin{equation}
	\varphi_V( \alpha, \lambda ) : = \log \int_{\R } e^{ \alpha V(x) + \lambda(x)} dx .
	\end{equation}
	Then, for each $\lambda \in C_b(\R )$, there exists an unique $ \alpha= \alpha( \lambda ) \in (-\infty,0)$ such that 
	\begin{equation*}
	\frac{d }{d \alpha} \varphi_V( \alpha( \lambda ), \lambda ) = R \quad \text{and} \quad  \frac{d^2 }{d \alpha^2} \varphi_V( \alpha( \lambda ), \lambda ) \in ( 0, \infty ).
	\end{equation*}
	Moreover, the mapping $\lambda \mapsto \varphi_V( \alpha( \lambda ), \lambda) $ is Fréchet-differentiable and we have that
	\begin{equation}
	\label{EqMinAlphaLam}
	\varphi_V( \alpha( \lambda ), \lambda) - \alpha( \lambda) R = \inf_{ \alpha < 0} \big[ \varphi_V( \alpha, \lambda ) - \alpha R \big].
	\end{equation}
\end{lem}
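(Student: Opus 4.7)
My plan is to analyze $\varphi_V(\,\cdot\,,\lambda)$ through its first two $\alpha$-derivatives, solve $\partial_\alpha\varphi_V(\alpha,\lambda)=R$ by the intermediate value theorem, and then lift pointwise uniqueness of $\alpha(\lambda)$ to Fréchet regularity via the Banach-space implicit function theorem. First, since $V$ is convex, symmetric with $V(0)=0$ and positive elsewhere, $V$ grows at least linearly at infinity, so for every $\alpha<0$ the integrand $e^{\alpha V+\lambda}$ lies in $L^1(\R)$ and $\varphi_V$ is well defined. Differentiating under the integral (justified by dominated convergence, using majorants of the form $(V+V^2)\,e^{(\alpha+\delta)V+\lambda}$ for small $\delta>0$) yields
\[
\partial_\alpha\varphi_V(\alpha,\lambda)=\E_{\mu_{\alpha,\lambda}}[V],\qquad \partial_\alpha^2\varphi_V(\alpha,\lambda)=\mathrm{Var}_{\mu_{\alpha,\lambda}}(V),
\]
where $\mu_{\alpha,\lambda}$ is the probability measure on $\R$ with Lebesgue density proportional to $e^{\alpha V+\lambda}$. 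Strict positivity of the variance (as $V$ is not $\mu_{\alpha,\lambda}$-a.s.\ constant) makes $g_\lambda(\alpha):=\partial_\alpha\varphi_V(\alpha,\lambda)$ continuous and strictly increasing on $(-\infty,0)$.

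Next I would establish $g_\lambda\bigl((-\infty,0)\bigr)=(0,\infty)$. As $\alpha\uparrow 0$, monotone convergence gives $\int e^{\alpha V+\lambda}dx\uparrow\int e^{\lambda}dx=\infty$ (here $\lambda$ being bounded is crucial), whereas the contribution to the numerator of $\mu_{\alpha,\lambda}\bigl(\{V\le M\}\bigr)$ from the bounded level set $\{V\le M\}$ stays uniformly bounded; hence $\mu_{\alpha,\lambda}\bigl(\{V>M\}\bigr)\to 1$ and so $g_\lambda(\alpha)\ge M\,\mu_{\alpha,\lambda}\bigl(\{V>M\}\bigr)\to M$ for every $M>0$, whence $g_\lambda(\alpha)\to\infty$. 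As $\alpha\to-\infty$, the measure should concentrate near $x=0$: given $\epsilon>0$ pick $\delta>0$ with $V\le \epsilon/4$ on $[-\delta,\delta]$ to get a lower bound $\int e^{\alpha V+\lambda}dx\ge 2\delta\, e^{\alpha\epsilon/4-\|\lambda\|_\infty}$, and on $\{V>\epsilon\}$ split $e^{\alpha V}=e^{(\alpha/2)V}\cdot e^{(\alpha/2)V}\le e^{\alpha\epsilon/2}\,e^{(\alpha/2)V}$ together with the uniform bound $\int Ve^{(\alpha/2)V}dx\le\int Ve^{-V}dx<\infty$ valid for $\alpha\le -2$. Dividing bounds the tail contribution by $C_\lambda\,\delta^{-1}\,e^{\alpha\epsilon/4}\to 0$, so $g_\lambda(\alpha)\le \epsilon+o(1)$ and $g_\lambda(\alpha)\to 0^+$. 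The intermediate value theorem now delivers a unique $\alpha(\lambda)\in(-\infty,0)$ with $g_\lambda(\alpha(\lambda))=R$, and the second derivative at this point is positive by the variance identity.

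The infimum identity is then immediate: $\alpha\mapsto\varphi_V(\alpha,\lambda)-\alpha R$ is convex with derivative $g_\lambda(\alpha)-R$ vanishing only at $\alpha(\lambda)$, so $\alpha(\lambda)$ is the unique global minimizer. For Fréchet differentiability, I would apply the implicit function theorem on $(-\infty,0)\times C_b(\R)$ to $\Phi(\alpha,\lambda):=g_\lambda(\alpha)-R$: joint continuity of $\Phi$ and of $\partial_\alpha\Phi$ in $(\alpha,\lambda)$ follows by routine dominated-convergence arguments, while $\partial_\alpha\Phi=\mathrm{Var}_{\mu_{\alpha,\lambda}}(V)>0$ supplies the required invertibility. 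This yields Fréchet differentiability of $\lambda\mapsto\alpha(\lambda)$, and the chain rule then gives Fréchet differentiability of $\lambda\mapsto\varphi_V(\alpha(\lambda),\lambda)$. The main obstacle is the limit $g_\lambda(\alpha)\to 0$ as $\alpha\to-\infty$: lacking any probabilistic representation of $\mu_{\alpha,\lambda}$ or quantitative asymptotics for $V$ at $0$, one has to match a Laplace-type lower bound on the partition function against a tail bound obtained from a secondary exponential tilt, with the two cutoffs ($\epsilon$ in the tail versus $\eta=\epsilon/4$ near $0$) tuned so that the competing exponentials leave a strict negative deficit.
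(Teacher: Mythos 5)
Your proposal is correct and follows essentially the same route as the paper: identify $\partial_\alpha\varphi_V$ and $\partial_\alpha^2\varphi_V$ as the mean and variance of $V$ under the tilted Gibbs measure, establish the limits $0$ and $\infty$ of the first derivative at the two ends of $(-\infty,0)$, apply the intermediate value theorem, obtain Fréchet differentiability from the implicit function theorem, and get \eqref{EqMinAlphaLam} from strict convexity. The only (cosmetic) difference is that the paper first reduces the two limit computations to the case $\lambda\equiv 0$ via the sandwich bound $e^{-2\|\lambda\|_\infty}\partial_\alpha\varphi_V(\alpha,0)\le\partial_\alpha\varphi_V(\alpha,\lambda)\le e^{2\|\lambda\|_\infty}\partial_\alpha\varphi_V(\alpha,0)$, whereas you carry the bounded $\lambda$ through directly.
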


\begin{rem}
To be precise, the function $\alpha( \lambda )$ also depends on $R$ and on $V$, but to keep the notational burden low, we omit these variables in this section.
\end{rem}
\begin{proof}(of Lemma \ref{AuxLemmaExpTilt})
	Fixing $\lambda \in C_b( \R )$, one can check easily that 
	\begin{equation}
	\label{EqPhiAlphaLambdaEst}
	e^{ - 2 || \lambda ||_{ \infty }} \frac{d }{d \alpha} \varphi_V( \alpha, 0 ) \leq  \frac{d }{d \alpha}  \varphi_V( \alpha, \lambda  ) \leq e^{ 2 || \lambda ||_{ \infty }} \frac{d }{d \alpha} \varphi_V( \alpha, 0 ).
	\end{equation}
		We have that $ \frac{d }{d \alpha} \varphi_V( \alpha, 0 ) \rightarrow 0 $ as $\alpha \rightarrow - \infty$ as well as $ \frac{d }{d \alpha} \varphi_V( \alpha, 0 ) \rightarrow \infty $ as $\alpha \rightarrow 0$. We show this. First, we observe that
		\begin{equation*}
		\alpha \mapsto \frac{d }{d \alpha} \varphi_V( \alpha, 0 ) = \int_{ \R  } V(x) e^{ \alpha V(x)- \varphi_V( \alpha, 0) }dx 
		\end{equation*}
		is, as a parameter integral, continuous on the negative real numbers. Now, for $M \in (0,\infty) $,
		\begin{align*}
		\int_{ \R  } V(x) e^{ \alpha V(x) - \varphi_V( \alpha, 0)} dx & \geq 2 e^{- \varphi_V( \alpha, 0)} \int_{ M}^{ \infty }  V( x) e^{ \alpha V(x)} d x \\
		&  \geq 2 V(M) e^{- \varphi_V( \alpha, 0)} \int_{ M}^{ \infty }  e^{ \alpha V(x)} d x \\
		& \geq  V(M) \Big( 1 - 2 e^{- \varphi_V( \alpha, 0)} \int_{ 0}^{ M } e^{ \alpha V(x)} dx \Big)  \xrightarrow{ \alpha \rightarrow 0-} V(M ).
		\end{align*}
		The limit in the last line holds, since $ \varphi_V( \alpha, 0) \longrightarrow \infty $ as $ \alpha \rightarrow 0$ and $ \int_{ 0}^{ M } e^{ \alpha V(x)} dx  \leq M $ for all $ \alpha \in (-\infty, 0)$. Thus, we get
		\begin{equation*}
		\liminf_{ \alpha \rightarrow 0 } \int_{ \R  } V(x) e^{ \alpha V(x) - \varphi_V( \alpha, 0) }dx \geq V(M) \longrightarrow \infty \quad{} \text{as} \quad{ } M \rightarrow \infty. 
		\end{equation*}
		Next, we prove the other limit. In contrast, for $ \alpha \rightarrow -	\infty$, we fix some $ x \in (0,\infty)$, where we have
		\begin{align*}
		e^{\alpha V(x)- \varphi_V( \alpha, 0)} & = \frac{e^{\alpha V(x)}}{ \int_{ \R } e^{ \alpha V(t) } dt } \\
		& \leq \frac{e^{\alpha V(x)}}{ \int_{ -x/2 }^{x/2} e^{ \alpha V(t) } dt } \\
		& \leq \frac{1}{x} e^{ \alpha( V(x)- V( x/2) ) } \xrightarrow{ \alpha \rightarrow -\infty } 0.
		\end{align*}
		In the last line, we used that $ V(x) - V( x/2) > 0$, which holds since $V$ is a convex function. Hence, $ \int_{ \R } V(x) e^{ \alpha V(x) - \varphi_V( \alpha, 0) }dx  \longrightarrow 0$ as $\alpha  \rightarrow - \infty $. 
	Thus, by Equation \eqref{EqPhiAlphaLambdaEst}, we have that both 
	\begin{equation*}
	\frac{d }{d \alpha} \varphi_V( \alpha, \lambda  ) \xrightarrow{ \alpha \rightarrow - \infty} 0 \qquad \text{and} \qquad \frac{d }{d \alpha} \varphi_V( \alpha, \lambda  ) \xrightarrow{ \alpha \rightarrow 0} \infty. 
	\end{equation*}
	This ensures that we find $ \widetilde{ \alpha } \in (-\infty,0)$ with $  \frac{d }{d \alpha} \varphi_V( \widetilde{ \alpha } , \lambda  ) = R $. Now consider a random variable $Z$ distributed according to the Gibbs density $ e^{ \widetilde{\alpha } V(x) + \lambda(x) - \varphi_V( \widetilde{\alpha }, \lambda) }$. Then, we have that 
	\begin{equation}
	\label{EqExpValueVofZ}
	\mathbb{E}[V(Z)] =  \int_{ \R} V(x) e^{ \widetilde{ \alpha } V(x) + \lambda(x)- \varphi_V( \widetilde{\alpha }, \lambda )} dx = \frac{d }{d \alpha} \varphi_V( \widetilde{ \alpha } , \lambda  ) = R
	\end{equation}
	 and 
	 \begin{gather}
	 \begin{split}
	 \label{EqVarianceofVofZ}
	 \mathbb{V} [ V(Z)] & = \int_{ \R} V(x)^2 e^{ \widetilde{ \alpha }V(x) + \lambda(x) - \varphi_V( \widetilde{ \alpha }, \lambda)} dx - \Big( \int_{ \R} V(x) e^{ \widetilde{ \alpha } V(x) + \lambda(x) - \varphi_V( \widetilde{ \alpha }, \lambda)}dx  \Big)^2\\
	 & =\frac{d ^2}{d \alpha^2} \varphi_V( \widetilde{ \alpha } , \lambda  ).
	 \end{split}
	 \end{gather}
	 Hence, $ \frac{d ^2}{d \alpha^2} \varphi_V( \widetilde{ \alpha } , \lambda  ) \in (0,\infty) $. We now show that $ \widetilde{\alpha}$ is unique. The implicit function theorem (see, e.g., Theorem 5.9 in \cite{FundDiffGeoLang1999}) applied to the mapping 
	$ \alpha \mapsto \frac{d}{d \alpha } \varphi_V (  \alpha  , \lambda )  $ $-R$
	guarantees that $ \widetilde{ \alpha } =: \alpha( \lambda)$ is unique and Fréchet-differentiable. Thus, as a composition of Fréchet-differentiable mappings, $ \varphi_V( \alpha( \lambda ) , \lambda )$ is Fréchet-differentiable. To see that $ \alpha( \lambda )$ is the minimizer in \eqref{EqMinAlphaLam}, we note that by the Cauchy-Schwarz inequality, $\alpha \mapsto \varphi_V( \alpha, \lambda )$ is strictly convex and so is $\alpha \mapsto \varphi_V( \alpha, \lambda ) - \alpha R$. Since $ \varphi_V( \alpha, \lambda ) \rightarrow \infty$ as $\alpha \rightarrow 0$, we find an unique minimizer $\overline{\alpha}( \lambda ) \in (-\infty,0)$ with 
	\begin{equation*}
	\frac{d}{d \alpha } \varphi_V ( \overline{ \alpha}( \lambda ) , \lambda ) - R = 0
	\end{equation*}
	and hence $ \overline{ \alpha }( \lambda ) = \alpha( \lambda )$.
	
\end{proof} 

In the proof Theorem \ref{ThmLDPEmpMeasuresOrlicz}, the Legendre transform of a certain functional containing $ \varphi_V( \alpha, \lambda ) $ from Lemma \ref{AuxLemmaExpTilt} will appear, which we shall compute in the following proposition.

\begin{proposition}
	\label{PropLegTransformOrlicz}
	Let $V : \R \rightarrow [0,\infty) $ be an Orlicz function and let $R \in (0,\infty) $. Define $ \Lambda_V : C_b( \R) \rightarrow \R $ as
	\begin{equation}
	\label{EqCumGenFctOrl}
	\Lambda_V( \lambda ) := \varphi_V( \alpha( \lambda ), \lambda ) - \alpha( \lambda ) R - \varphi_V( \alpha( 0) , 0) + \alpha( 0) R,
	\end{equation}
	where $\varphi_V( \alpha, \lambda )$ and $\alpha( \lambda )$ are defined as in Lemma \ref{AuxLemmaExpTilt}. Then, the Legendre transform $\mathbb{I}_V: \mathscr{M}_1( \R) \rightarrow [0,\infty) $ of $ \Lambda_V $ is given as
	\begin{equation}
	\label{EqGRFLDPempMOrl}
	\mathbb{I}_V(\mu) := \begin{cases} 
	H( \mu | \mu_{ V, \alpha(0) }) + \alpha(0) \big[ m_V( \mu ) -R  \big] & : m_V( \mu ) \leq R \\
	\infty & \text{else}, 
	\end{cases}
	\end{equation}
	where for $\beta \in (-\infty,0)$, $\mu_{ V, \beta}$ is the distribution with Lebesgue density 
	\begin{equation*}
	\frac{d \mu_{V, \beta } (x)}{dx }= e^{ \beta V(x) - \varphi_V( \beta , 0)} , \quad x \in \R.
	\end{equation*}
    In addition, $\mathbb{I}_V$ is strictly convex where finite.
\end{proposition}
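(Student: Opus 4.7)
The plan is to convert the Legendre transform of $\Lambda_V$ into a double supremum --- one over the dual variable $\lambda \in C_b(\R)$ and one over the parameter $\alpha < 0$ --- by means of the minimisation identity \eqref{EqMinAlphaLam}, and then to recognise the inner supremum as a relative entropy via the variational formula in Proposition \ref{PropVarFormulaRelEntr}.

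By \eqref{EqMinAlphaLam} I have $\Lambda_V(\lambda) = \inf_{\alpha < 0}[\varphi_V(\alpha,\lambda) - \alpha R] + C$ with constant $C := \alpha(0) R - \varphi_V(\alpha(0),0)$, so that, after the trivial exchange of the two (commuting) suprema,
\[
\mathbb{I}_V(\mu) \;=\; \sup_{\lambda \in C_b(\R)} \bigl\{\mathbb{E}_\mu[\lambda] - \Lambda_V(\lambda)\bigr\} \;=\; \sup_{\alpha < 0} \sup_{\lambda \in C_b(\R)} \bigl\{\mathbb{E}_\mu[\lambda] - \varphi_V(\alpha,\lambda) + \alpha R\bigr\} - C.
\]
For each fixed $\alpha < 0$, I factor $\varphi_V(\alpha,\lambda) = \varphi_V(\alpha,0) + \log \mathbb{E}_{\mu_{V,\alpha}}[e^\lambda]$ and apply Proposition \ref{PropVarFormulaRelEntr} with $\nu = \mu_{V,\alpha}$ to identify the inner supremum as $H(\mu | \mu_{V,\alpha}) - \varphi_V(\alpha,0)$. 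I would then insert the shift identity
\[
H(\mu | \mu_{V,\alpha}) = H(\mu | \mu_{V,\alpha(0)}) + (\alpha(0) - \alpha)\, m_V(\mu) + \varphi_V(\alpha,0) - \varphi_V(\alpha(0),0),
\]
which is valid whenever $m_V(\mu) < \infty$ by a direct Radon--Nikodym computation against the explicit Gibbs densities, and which reads $+\infty = +\infty$ in the degenerate case $m_V(\mu) = \infty$. After substitution, the $\varphi_V(\alpha,0)$ contributions cancel and the remaining $\alpha$-dependence collapses to the affine function $\alpha \mapsto -\alpha[m_V(\mu) - R]$.

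Computing $\sup_{\alpha<0}$ of this affine function is then a one-line case analysis: it equals $0$ when $m_V(\mu) \le R$ (attained as $\alpha \to 0^-$) and $+\infty$ when $m_V(\mu) > R$ (letting $\alpha \to -\infty$). Collecting what survives, the constants satisfy $\alpha(0) m_V(\mu) - \varphi_V(\alpha(0),0) - C = \alpha(0)[m_V(\mu) - R]$, and \eqref{EqGRFLDPempMOrl} drops out in both subcases. Strict convexity on the finiteness domain is then automatic: $\mu \mapsto H(\mu | \mu_{V,\alpha(0)})$ is strictly convex by strict convexity of $t \mapsto t \log t$, while $\mu \mapsto \alpha(0)[m_V(\mu) - R]$ is affine. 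The one point requiring care is the bookkeeping at the boundary, namely when either $m_V(\mu) > R$ or $\mu$ fails to be absolutely continuous with respect to Lebesgue measure; in both situations the value $+\infty$ appears consistently, either directly via $H(\mu | \mu_{V,\alpha(0)}) = +\infty$ or via the outer supremum over $\alpha$, so the piecewise formula remains unambiguous.
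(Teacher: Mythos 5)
Your proposal is correct and follows essentially the same route as the paper: rewriting $\Lambda_V(\lambda)$ via the minimisation identity \eqref{EqMinAlphaLam}, exchanging the two suprema, identifying the inner supremum through the variational formula of Proposition \ref{PropVarFormulaRelEntr}, applying the same Radon--Nikodym shift identity relating $H(\mu\,|\,\mu_{V,\alpha})$ to $H(\mu\,|\,\mu_{V,\alpha(0)})$, and evaluating the supremum of the resulting affine function of $\alpha$. The constants check out and the strict convexity argument matches the paper's, so nothing further is needed.
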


\begin{rem}
	\label{RemGRFOrl} 
	If $ \mu \in \mathscr{M}_1( \R )$ with $m_V( \mu ) \leq R$ and $\mu $ is absolutely continuous with respect to Lebesgue measure, then we have
	\begin{equation*}
	\mathbb{I}_V ( \mu ) = -h( \mu ) + \varphi_V( \alpha( 0), 0) - \alpha( 0) R ,
	\end{equation*}
	where $ h( \mu) $ is the entropy of $\mu $. Indeed, for $\mu \in \mathscr{M}^V_1(\R)$, we have
\begin{align*}
H( \mu | \mu_{V, \alpha(0)})  &= \int_{ \R } \log \frac{d \mu }{ d \mu_{ V, \alpha(0) }}(x)  \mu( dx) \\
& = \int_{ \R } \log \frac{d \mu }{ d x}(x)  \mu( dx)
+ \int_{ \R } \log \frac{d x }{ d \mu_{ V, \alpha(0) }}(x)  \mu( dx) \\
& = -h(\mu ) - \alpha(0) m_V( \mu ) - \varphi_V( \alpha(0), 0) ,
\end{align*}
where we used that $ \frac{d x}{d \mu_{ V, \alpha(0 ) }}(x) = e^{ -\alpha(0) V(x) + \varphi_V( \alpha(0) , 0)}$. For the GRF $\I_V( \mu)$, we thus get
\begin{align*}
\I_V( \mu ) & = H( \mu | \mu_{ V, \alpha(0) }) + \alpha(0) \big[  m_V(\mu) -R  \big] \\
& = -h(\mu ) - \alpha(0) m_V( \mu ) - \varphi_V( \alpha(0), 0) + \alpha(0) \big[  m_V(\mu) -R  \big] \\
& =  -h( \mu ) + \varphi_V( \alpha( 0), 0) - \alpha( 0) R.
\end{align*}
\end{rem}

\begin{proof}(of Proposition \ref{PropLegTransformOrlicz})	
	By definition, the Legendre transform $\mathbb{I}_V$ of $\Lambda_V $ is given as
	\begin{align*}
	\mathbb{I}_V(\mu ) : &= \sup_{ \lambda \in C_b( \R )} \big[  \mathbb{E}_{ \mu } [ \lambda ] - \varphi_V( \alpha( \lambda ), \lambda ) + \alpha( \lambda ) R   \big] + \varphi_V( \alpha( 0), 0) - \alpha(0) R  \\
	& = \sup_{ \lambda \in C_b( \R )} \big[  \mathbb{E}_{ \mu } [ \lambda ] +  \sup_{ \beta < 0} \big\{   \beta R - \varphi_V( \beta, \lambda )  \big \}   \big] + \varphi_V( \alpha( 0), 0) - \alpha(0) R,
	\end{align*}
	where the second equality uses the fact that $\alpha( \lambda)$ minimizes $\beta \mapsto \varphi_V( \beta, \lambda) - \beta R$ (see Lemma \ref{AuxLemmaExpTilt}). By rearranging the terms and interchanging the suprema, we get
	\begin{align}
	\nonumber
	\mathbb{I}_V(\mu ) & =  \sup_{ \beta < 0}  \big[  \beta R  +  \sup_{ \lambda \in C_b( \R )} \big\{ \mathbb{E}_{ \mu } [ \lambda ] - \varphi_V(  \beta, \lambda )  \big  \}   \big] + \varphi_V( \alpha( 0), 0) - \alpha(0) R \\
	\label{EqGRFrelEntr}
	& = \sup_{ \beta < 0}  \big[  \beta R  +  H( \mu | \mu_{V, \beta })  - \varphi_V( \beta , 0)  \big] + \varphi_V( \alpha( 0), 0) - \alpha(0) R .
	\end{align}
	Here we used the variational formula for the relative entropy $H$ given in Proposition \ref{PropVarFormulaRelEntr}.
	Next, we take a closer look at the relative entropy $H( \mu | \mu_{V,  \beta })$ in order to obtain the desired representation of $\mathbb{I}_V$. First, we note that 
	\begin{equation*}
	\frac{d \mu_{ V, \alpha(0)}}{ d \mu_{V, \beta }}(x) = \exp \big( \varphi_V( \beta, 0) - \varphi_V( \alpha( 0), 0) - \beta V(x) + \alpha( 0) V(x)  \big), \quad x  \in \R.
	\end{equation*}
	This leads to
	\begin{align*}
	H( \mu | \mu_{\beta }) & = \int_{\R} \log \Big( \frac{d \mu }{ d \mu_{ V, \beta }}(x) \Big ) \mu( dx) \\
	& = \int_{\R} \log \Big( \frac{d \mu }{ d \mu_{ V, \alpha(0) }}(x) \Big ) \mu( dx) + \int_{\R} \log \Big( \frac{d \mu_{ V, \alpha(0)} }{ d \mu_{ V, \beta} }(x) \Big )  \mu( dx) \\ 
	&= H( \mu | \mu_{ V, \alpha(0)}) + \varphi_V( \beta , 0) - \varphi_V( \alpha( 0), 0) + ( \alpha(0) - \beta ) m_V( \mu ).
	\end{align*}
	From Equation \eqref{EqGRFrelEntr}, it follows that
	\begin{align*}
	\mathbb{I}_V( \mu ) & = \sup_{ \beta < 0}  \big[  \beta R  +  H( \mu | \mu_{ V, \beta})  - \varphi_V( \beta , 0)  \big] + \varphi_V( \alpha( 0), 0) - \alpha(0) R \\
	&= 
	H( \mu | \mu_{ V, \alpha(0) }) + \sup_{ \beta < 0}  \big[  \beta R   -  \beta m_V( \mu ) \big] + \alpha( 0) ( m_V( \mu ) - R ) \\
	& = \begin{cases} 
	H( \mu | \mu_{ V, \alpha(0) }) + \alpha( 0) ( m_V( \mu ) - R ) &:  m_V( \mu) \leq R \\
	\infty & \text{else}.
	\end{cases}
	\end{align*}
	For the strict convexity, we just note that $\mu \mapsto H( \mu | \mu_{ V, \alpha(0)})$ is strictly convex (see, e.g., Ex. 5.5 in \cite{LDGibsMeasRasoulAga2005}) and for $ \kappa \in (0,1)$ and $\mu_1, \mu_2 \in \mathscr{M}_1(\R)$ with $m_V( \mu_i) < \infty $, we have that $m_V( (1-\kappa) \mu_1+ \kappa \mu_2) = (1- \kappa) m_V( \mu_1) + \kappa m_V( \mu_2)$. 
\end{proof}

\begin{proof}(of Theorem \ref{ThmLDPEmpMeasuresOrlicz}) 
	We are going to prove Theorem \ref{ThmLDPEmpMeasuresOrlicz} by applying the version of the G\"artner--Ellis theorem stated in Lemma \ref{LemGaertnerEllis}. We view $( \mathcal{L}^{n,V} )_{n \in \N}$ as $\mathcal{M}(\R)$-valued random variables, where $\mathcal{M}(\R)$ is the space of finite signed measures on the Borel $\sigma$-algebra on $\R$. Let $\mathcal{M}(\R)' $ denote the algebraic dual of $\mathcal{M}(\R)$, i.e., the space of linear functions mapping from $\mathcal{M}(\R)$ to $\R$. Recall that $C_b( \R)$ denotes the space of continuous and bounded functions mapping from $\R$ to $\R$. We define 
	\begin{equation*}
	\mathcal{Y} := \Big \{ \widehat{ \lambda } \in \mathcal{M}(\R)' \ : \ \exists \lambda \in C_b( \R) \, : \, \forall \mu \in \mathcal{M}(\R) \ : \ \widehat{\lambda} ( \mu ) = \int_{ \R } \lambda( x) \mu(d x) \Big \} \subseteq \mathcal{M}(\R)'.
	\end{equation*}
	The set $\mathcal{Y}$ is separating in $ \mathcal{M}(\R)$, i.e., for $ \nu , \mu \in \mathcal{M}(\R)$ with $ \mu \neq \nu$, we find a $\widehat{ \lambda } \in \mathcal{Y}$ such that $ \widehat{ \lambda }( \mu - \nu ) \neq 0$ (see \cite[p. 261]{DZ2011}).  The $\mathcal{Y}$-topology on $\mathcal{M}(\R)$ is the topology generated by the system of sets 
	\begin{equation*}
	U_{ \delta, x, \widehat{ \lambda} } := \big \{  \mu \in \mathcal{M}(\R) \ : \ \big | \widehat{ \lambda }( \mu ) -x \big | < \delta   \big \},
	\end{equation*}
	where $ \widehat{ \lambda } \in \mathcal{Y}, x \in \R$ and $\delta \in (0, \infty)$. By definition, the relative topology on $\mathcal{M}_1(\R) \subseteq \mathcal{M}( \R)$ is the weak topology. By \cite[Theorem B.8]{DZ2011} we have that $\mathcal{M}( \R)$ together with the $\mathcal{Y}$-topology is a locally convex, Hausdorff topological vector space and $\mathcal{Y}$ is the topological dual $\mathcal{M}( \R)^*$ of $\mathcal{M}( \R)$. Thus, we can identify $\mathcal{M}(\R)^*$ with $C_b(\R)$. Now the framework for Lemma \ref{LemGaertnerEllis} is set up and we can compute the Gärtner--Ellis limit
	\begin{equation}
	\label{EqGarEllLimit}
	\Lambda_V( \lambda) : = 
	\lim_{n \rightarrow \infty } \frac{1}{n} \log \mathbb{E} \Big[ \exp \Big ( n \int_{\R} \lambda(x) \mathcal{L}^{n,V}(dx) \Big ) \Big] , \quad \lambda \in C_b( \R),
	\end{equation}
	and then show that it is Gateaux-differentiable. This can be done by using an appropriate exponential tilting argument. In the following, we denote by $X_R^{n,V}$ a random variable uniformly distributed on $\mathbb{B}^{n,V}_R$. Then, for the quantity in the Gärtner--Ellis limit \eqref{EqGarEllLimit}, where $\lambda \in C_b( \R)$, we have 
	\begin{align}
	\nonumber
	\frac{1}{n} \log \mathbb{E} \Big[ \exp \Big (n  \int_{\R} \lambda(x)  \mathcal{L}^{n,V}(dx) \Big ) \Big] & = \frac{1}{n} \log \mathbb{E} \Big[ \exp \Big ( \sum_{i=1}^{n} \lambda \Big( X_{R}^{n,V}(i) \Big) \Big ) \Big] \\
	\label{EqGEExprOrl}
	& = \frac{1}{n} \log  \int_{\mathbb{B}^{n,V}_R} \exp \Big ( \sum_{i=1}^{n} \lambda ( x_i ) \big) d(x_1,\dots,x_n) - \frac{1}{n} \log \vol_n\big(\mathbb{B}^{n,V}_R \big).
	\end{align} 
	We now consider a sequence of iid random variables $(Z_i)_{i \in \N}$, all distributed according to the Gibbs density $p:\R\to\R$,
	\begin{equation*}
	p(x): = e^{ \alpha( \lambda ) V(x) + \lambda(x) - \varphi_V( \alpha( \lambda ), \lambda )} , 
	\end{equation*}
	where $ \alpha( \lambda)$ and $ \varphi_V( \alpha( \lambda), \lambda)$ are defined as in Lemma \ref{AuxLemmaExpTilt}. Now, we define a new sequence of independent and identically distributed random variables, $Y_i := V(Z_i) - R$, $i \in \N$. We obtain the identity
	\begin{align*}
	& \int_{\mathbb{B}^{n,V}_R} \exp \Big ( \sum_{i=1}^{n} \lambda ( x_i ) \Big) d(x_1,\dots,x_n) \cr 
	&= \int_{\mathbb{B}^{n,V}_R} \exp \Big ( \sum_{i=1}^{n} \lambda ( x_i ) \Big) \prod_{i=1}^{n} p(x_i)^{ -1 }\prod_{i=1}^{n} p(x_i) \, d(x_1,\dots,x_n) \\
	& = \mathbb{E} \Big[ \mathbb{1}_{ \{  ( Z_1,...,Z_n) \in \mathbb{B}^{n,V}_R \} } \exp \Big ( \sum_{i=1}^{n} \lambda ( Z_i ) - \alpha( \lambda ) \sum_{i=1}^{n} V( Z_i) - \sum_{i=1}^{n} \lambda(Z_i) + n \varphi_V( \alpha( \lambda) , \lambda) \Big) \Big] \\
	& = \exp \big( n \varphi_V( \alpha( \lambda ), \lambda) - n \alpha( \lambda) R \big)\cdot \mathbb{E} \Big[ \mathbb{1}_{ \{ \sum_{i=1}^{n} Y_i \leq 0 \} } \exp \Big (  - \alpha( \lambda ) \sum_{i=1}^{n} Y_i \Big) \Big].
	\end{align*}
	The expectation in the previous line is bounded from above, since $\alpha(\lambda ) \in (-\infty, 0)$, more precisely, 
	\begin{equation*}
	\mathbb{E} \Big[ \mathbb{1}_{ \{ \sum_{i=1}^{n} Y_i \leq 0 \} } \exp \Big (  - \alpha( \lambda ) \sum_{i=1}^{n} Y_i \Big) \Big] \leq 1.
	\end{equation*}
	Therefore, we obtain the upper bound
	\begin{equation}
	\label{EqUpperBound}
	\frac{1}{n} \log 
	\int_{\mathbb{B}^{n,V}_R} \exp \Big ( \sum_{i=1}^{n} \lambda ( x_i ) \Big) \, d(x_1,\dots,x_n) \leq  \varphi_V( \alpha( \lambda ), \lambda) -  \alpha( \lambda) R . 
	\end{equation}
	For a lower bound, we fix some $c \in ( 0, \infty)$ and obtain
	\begin{align*}
	\mathbb{E} \Big[ \mathbb{1}_{ \{ \sum_{i=1}^{n} Y_i \leq 0 \} } e^{  - \alpha( \lambda ) \sum_{i=1}^{n} Y_i  } \Big] & \geq \mathbb{E} \Big[ \mathbb{1}_{ \{ - c \sqrt{n}  \leq  \sum_{i=1}^{n} Y_i \leq 0 \} } e^{  - \alpha( \lambda ) \sum_{i=1}^{n} Y_i  } \Big] \\
	& \geq 
	\mathbb{E} \Big[ \mathbb{1}_{ \{ - c \sqrt{n} \leq  \sum_{i=1}^{n} Y_i \leq 0 \} } e^{ \alpha( \lambda ) c \sqrt{n}} \Big]   \\
	& = e^{ \alpha( \lambda ) c \sqrt{n}} \mathbb{P} \Big[ - c \sqrt{n} \leq  \sum_{i=1}^{n} Y_i \leq 0 \Big]  .
	\end{align*}
	This yields the lower bound
	\begin{align}\label{EqlowerBound}
	& \frac{1}{n} \log 
		\int_{\mathbb{B}^{n,V}_R} \exp \Big ( \sum_{i=1}^{n} \lambda ( x_i ) \Big) d(x_1,\dots,x_n) \cr
		& \geq \frac{c \alpha( \lambda ) }{\sqrt{n}} + \frac{1}{n} \log \mathbb{P} \big[ - c \sqrt{n} \leq  \sum_{i=1}^{n} Y_i \leq 0 \big] + \varphi_V( \alpha( \lambda ), \lambda) -  \alpha( \lambda) R. 
	\end{align}
	By Equation \eqref{EqExpValueVofZ} and Equation \eqref{EqVarianceofVofZ} in Lemma \ref{AuxLemmaExpTilt}, we have that $ \E [ Y_1] = \E[ V( Z_1)] - R = 0$ and $ \mathbb{V}[ Y_1] = \mathbb{V}[V(Z_1)] \in (0, \infty)$. Hence, we can apply the central limit theorem to the sequence $(Y_i)_{i \in \N}$. Thus, the probability in Equation \eqref{EqlowerBound} tends to $ \mathcal{N}( 0, \mathbb{V}[ Y_1])( [-c, 0]) \in (0, 1 )$ as $ n \rightarrow \infty$. Combining the upper bound \eqref{EqUpperBound} and the lower bound \eqref{EqlowerBound}, we get
	\begin{equation}\label{eq:limit orlicz ball integral}
	\lim_{ n \rightarrow \infty }\frac{1}{n} \log \int_{\mathbb{B}^{n,V}_R} \exp \Big ( \sum_{i=1}^{n} \lambda ( x_i ) \Big) d(x_1,\dots,x_n)  = \varphi_V( \alpha( \lambda ), \lambda ) - \alpha ( \lambda ) R. 
	\end{equation}
	In case of $\lambda \equiv 0$, we receive the asymptotic volume of an Orlicz ball with radius $R $, i.e.,
	\begin{equation}\label{eq:limit log volume orlicz}
	\lim_{n \rightarrow \infty} \frac{1}{n} \log \vol_n \big( \mathbb{B}^{n,V}_R  \big) = \lim_{ n \rightarrow \infty }\frac{1}{n} \log \int_{\mathbb{B}^{n,V}_R} d(x_1,\dots,x_n)  = \varphi_V( \alpha( 0 ), \lambda ) - \alpha ( 0) R .
	\end{equation}
	Putting \eqref{eq:limit orlicz ball integral} and \eqref{eq:limit log volume orlicz} together, we obtain from \eqref{EqGEExprOrl} that the Gärtner--Ellis limit $\Lambda_V: C_b( \R ) \rightarrow \R $ (see \eqref{EqGarEllLimit}) is given by
	\begin{equation}
	\label{EqGELimCumGenFct}
	\Lambda_V( \lambda) = 
	\varphi_V( \alpha( \lambda ), \lambda ) - \alpha ( \lambda ) R -\varphi_V( \alpha( 0 ), \lambda ) + \alpha ( 0) R.
	\end{equation}
	In particular, by Lemma \ref{AuxLemmaExpTilt}, $\Lambda_V$ is Fréchet-differentiable. In order to be able to apply Lemma \ref{LemGaertnerEllis}, it remains to show that the sequence of empirical measures $( \mathcal{L}^{n,V} )_{n \in \N}$ is exponentially tight. We define the quantity $\widetilde{M} := \sup_{ |x| \geq 1} \frac{|x|}{V(x)}$ and note that $\widetilde{M} \in (0, \infty )$, since $V$ is an Orlicz function. We set $M := 1+ \widetilde{M} R$ and let $X^{n , V}_R \sim \U ( \mathbb{B}^{n,V}_R)$, for $n \in \N$. Then, recalling that $m_1$ denotes the first moment map, we have 
	\begin{align*}
	m_1(\mathcal{L}^{n,V}) & = \frac{1}{n} \sum_{i =1 }^{n} |X_{R}^{n,V}(i)| \\
	& = \frac{1}{n} \sum_{i =1 }^{n} |X_{R}^{n,V}(i)| \mathbb{1}_{ \{ |X_{R}^{n,V}(i) | \leq 1 \} } +  \frac{1}{n} \sum_{i =1 }^{n} |X_{R}^{n,V}(i)| \mathbb{1}_{ \{ |X_{R}^{n,V}(i) | >  1 \} } \\
	& \leq 1 + \frac{1}{n} \sum_{i =1 }^{n} V\big ( X_{R}^{n, V}(i) \big )  \frac{|X_{R}^{n, V}(i)|}{V\big ( X_{R}^{n, V}(i) \big )} \mathbb{1}_{ \{ |X_{R}^{n, V}(i) | >  1 \} } \\
	& \leq 1 + \widetilde{M} \frac{1}{n} \sum_{i =1 }^{n} V\big (X_{R}^{n, V}(i) \big ) \\
	& \leq 1 + \widetilde{M} R = M .
	\end{align*}
	Thus, for every $n \in \N$, we have $ \mathcal{L}^{n,V}\in K_1^M $, where $K_1^M$ is the weakly compact set from Proposition \ref{PropCompactSetpWasserst}. In other words, we have that
	\begin{equation*}
	\frac{1}{n} \log \mathbb{P} \big[ \mathcal{L}^{n,V} \in \big(K_1^M \big)^c  \big] = - \infty ,
	\end{equation*} 
	implying the exponential tightness of $( \mathcal{L}^{n,V})_{n \in \N}$.
	It now follows from Lemma \ref{LemGaertnerEllis} that $( \mathcal{L}^{n,V})_{n \in \N}$ satisfies an LDP in the weak topology on $ \mathscr{M}_1(\R)$. The GRF is the Legendre transform of $\Lambda_V$, which is, by Proposition \ref{PropLegTransformOrlicz}, the strictly convex function $\mathbb{I}_V : \mathscr{M}_1( \R ) \rightarrow [0, \infty ]$ with
	\begin{equation*}
	\mathbb{I}_V(\mu) = \begin{cases} 
	H( \mu | \mu_{ V, \alpha(0) }) + \alpha(0) \big[ m_V( \mu ) -R  \big] &: m_V( \mu ) \leq R \\
	\infty & \text{else}.  
	\end{cases}
	\end{equation*} 
	Now consider the situation when there exists another Orlicz function $W$ such that
	\begin{equation*}
	\frac{V(x)}{W(x)} \longrightarrow \infty \quad \text{as} \quad |x| \rightarrow \infty. 
	\end{equation*}
	Recall the set $ K_{V}^R$ from Proposition \ref{PropCompactSetGenWasserst} and note that $ ( \mathcal{L}^{n,V} )_{n \in \N} \subseteq K_{V}^R$. Since $K_{V}^R $ is compact in $ \mathscr{M}^W_1( \R )$ by Proposition \ref{PropCompactSetGenWasserst}, $( \mathcal{L}^{n,V})_{n \in \N}$ is exponentially tight in $\mathscr{M}^W_1( \R )$. By Corollary \ref{CorCoarserFinerTop}, $(\mathcal{L}^{n,V})_{n \in \N}$ satisfies the LDP in $ (\mathscr{M}^W_1( \R ), d_W )$ with strictly convex GRF $\mathbb{I}_V$. This completes the proof.
\end{proof}

\begin{cor}
	\label{CorThinshellOrl}
	Let $V$ be an Orlicz function and $p \in (1,\infty)$ such that 
	\begin{equation*}
	\frac{V(x)}{|x|^p} \longrightarrow \infty \quad \text{as} \quad |x| \rightarrow \infty 
	\end{equation*}
	and let $ X^{n ,V}_R \sim \U ( \mathbb{B}_R^{n,V })$.
	Then, for $ 1 \leq q < p $, we have that $ ( n^{-1/p} || X^{n,V}_R ||_p )_{n \in N}$ satisfies an LDP in $\R$ with GRF $ J_p^{V} : \mathbb{R} \rightarrow [0, \infty ]$, where
	\begin{equation*}
	J_p^{V} ( x) := 
	\begin{cases} 
	-\max_{ \alpha < 0 }  \big \{ \alpha - \log \int_{\R } e^{ \alpha V(t)} dt   \big \} - \max \big \{ h( \nu ) : \nu \in \mathscr{M}_1(\R) , m_2( \nu )^{1 /p} = x , m_V( \nu ) \leq  R  \big \} &: x \geq 0  \\
	\infty  &: x <0.
	\end{cases}
	\end{equation*}
\end{cor}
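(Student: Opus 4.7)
The plan is to deduce the corollary from Theorem \ref{ThmLDPEmpMeasuresOrlicz} via the contraction principle (Lemma \ref{LemContractionPrinciple}). Since $W(x):=|x|^p$ is an Orlicz function and, by hypothesis, $V(x)/W(x) \to \infty$ as $|x| \to \infty$, the second part of Theorem \ref{ThmLDPEmpMeasuresOrlicz} gives that $(\mathcal{L}^{n,V})_{n \in \N}$ satisfies an LDP in the $p$-Wasserstein space $(\mathscr{M}_1^p(\R), d_{W})$ with good rate function $\mathbb{I}_V$.

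Next, I would use the identity
\[
n^{-1/p}\|X_R^{n,V}\|_p \;=\; \Bigl(\tfrac{1}{n}\sum_{i=1}^n |X_R^{n,V}(i)|^p\Bigr)^{1/p} \;=\; m_p\bigl(\mathcal{L}^{n,V}\bigr)^{1/p},
\]
which realises the quantity of interest as $\Phi(\mathcal{L}^{n,V})$ for $\Phi: \mathscr{M}_1^p(\R) \to \R$ defined by $\Phi(\mu) := m_p(\mu)^{1/p}$. By Remark \ref{RemContinuityVthmomentmap} applied with $V(\cdot) = |\cdot|^p$, the $p$-th moment map is continuous on $(\mathscr{M}_1^p(\R), d_W)$; composing with $t \mapsto t^{1/p}$ on $[0,\infty)$ shows $\Phi$ is continuous. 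The contraction principle then yields an LDP for $(n^{-1/p}\|X_R^{n,V}\|_p)_{n \in \N}$ in $\R$ with good rate function
\[
J_p^V(x) \;=\; \inf\bigl\{\mathbb{I}_V(\mu) \,:\, \mu \in \mathscr{M}_1^p(\R),\ m_p(\mu)^{1/p} = x\bigr\}.
\]
For $x<0$, the set on the right is empty and so $J_p^V(x) = \infty$.

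For $x \geq 0$, only $\mu$ with $m_V(\mu) \leq R$ can contribute, and for such $\mu$ the finiteness of $H(\mu | \mu_{V,\alpha(0)})$ forces $\mu$ to be absolutely continuous with respect to Lebesgue measure. Using the reformulation from Remark \ref{RemGRFOrl},
\[
\mathbb{I}_V(\mu) \;=\; -h(\mu) + \varphi_V(\alpha(0),0) - \alpha(0) R,
\]
I would pull the additive constant out of the infimum. By Lemma \ref{AuxLemmaExpTilt}, $\alpha(0)$ is precisely the minimizer of $\alpha \mapsto \varphi_V(\alpha,0) - \alpha R$ on $(-\infty,0)$, so
\[
\varphi_V(\alpha(0),0) - \alpha(0) R \;=\; -\max_{\alpha<0}\Bigl\{\alpha R - \log\int_{\R} e^{\alpha V(t)}\,dt\Bigr\}.
\]
The remaining infimum of $-h(\mu)$ subject to $m_p(\mu) = x^p$ and $m_V(\mu) \leq R$ equals the negative of the maximal entropy under those two constraints, which matches the stated formula for $J_p^V(x)$.

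The main subtlety is to verify that measures without a Lebesgue density contribute nothing to the infimum; this is in fact automatic, because $H(\mu | \mu_{V,\alpha(0)}) = \infty$ whenever $\mu$ is not absolutely continuous with respect to Lebesgue, making $\mathbb{I}_V(\mu) = \infty$. No analysis of whether the constrained entropy-maximization problem actually admits an optimizer is needed for the LDP itself, which only requires the rate function to be expressed as the infimum above.
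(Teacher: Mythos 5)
Your argument is correct and follows essentially the same route as the paper: the LDP in $(\mathscr{M}_1^p(\R),d_W)$ from the second part of Theorem~\ref{ThmLDPEmpMeasuresOrlicz}, contraction along $\mu\mapsto m_p(\mu)^{1/p}$ (the paper merely splits this into two successive contractions, first $m_p$ and then $t\mapsto t^{1/p}$), and the rewriting of $\mathbb{I}_V$ via Remark~\ref{RemGRFOrl} together with the identification of the constant through Lemma~\ref{AuxLemmaExpTilt}. The only point you wave off is the attainment of the entropy supremum, which matters because the stated rate function is written with a \emph{max}; the paper closes this by observing that the constrained problem has the form treated in Lemma~\ref{LemMaxEntr}, a one-line addition to your write-up.
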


\begin{rem}
	In case of $ p=2 $ and $R=1$, the GRF $J_2^V$ from Corollary \ref{CorThinshellOrl} is the same as the GRF from \cite[Section $3$]{kimliaoramanan2020asymptotic}. There, the authors used a different approach, but encountered the same growth condition on $V$, i.e., they assumed
	\begin{equation*}
	\frac{V(x)}{|x|^2} \longrightarrow \infty \quad \text{as} \quad |x| \rightarrow \infty .
	\end{equation*}
	This is necessary to introduce the exponential change of measure used in the proof of Proposition 3.11 in \cite{kimliaoramanan2020asymptotic}. In \cite{kimliaoramanan2020asymptotic} this condition may seem artificial at first, but looking at the proof of Theorem \ref{ThmLDPEmpMeasuresOrlicz}, one sees that this condition appears in a quite natural way.     
\end{rem}

\begin{proof}(of Corollary \ref{CorThinshellOrl})
	Under our assumption, by Theorem \ref{ThmLDPEmpMeasuresOrlicz}, $( \mathcal{L}^{n,V} )_{n \in \N}$ satisfies an LDP in $ \mathscr{M}^p_1( \R)$ with strictly convex GRF $\mathbb{I}_V : \mathscr{M}_1( \R) \rightarrow [0, \infty]$, where
	\begin{equation*}
	\mathbb{I}_V(\mu) = \begin{cases} 
	H( \mu | \mu_{ V, \alpha(0) }) + \alpha(0) \big[ m_V( \mu ) -R  \big] &: m_V( \mu ) \leq R \\
	\infty & \text{else}.  
	\end{cases}
	\end{equation*}
	We can apply the contraction principle to the continuous mapping $ m_p: \mathscr{M}^p_1( \R ) \rightarrow [0,\infty) $ with $m_p( \nu) = \int_{ \R } |x|^p d \nu( x) $ (see Remark \ref{RemContinuityVthmomentmap}) in order to establish an LDP for $ ( n^{-1} || X^{ n,V}_R ||_p^p )_{n \in \N }$ with GRF $\widetilde{J}_p^{V} : \R \rightarrow [0,\infty) $, given as
	\begin{equation*}
	\widetilde{J}_p^{V} (x ) : = 
	\begin{cases}
	\inf \big \{  H( \mu | \mu_{ V, \alpha(0 ) }) + \alpha( 0) [ R - m_V( \mu )] :\mu \in \mathscr{M}_1( R), m_V( \mu ) \leq R, m_p( \mu ) = x   \big \} &:  x \geq 0 \\
	\infty & \text{else}.
	\end{cases} 
	\end{equation*}
	Further, because the relative entropy appears, in the infimum we only need to consider measures $\mu$ which are absolutely continuous with respect to $\mu_{ V, \alpha(0)}$ (and hence absolutely continuous with respect to Lebesgue measure on $\R$). By Remark \ref{RemGRFOrl}, we get
	\begin{align*}
	\widetilde{J}_p^{V} (x ) & = \inf \big \{  -h( \mu ):\mu \in \mathscr{M}_1(\R ), m_V( \mu ) \leq R, m_p( \mu ) = x   \big \}  + \varphi_V( \alpha( 0), 0) - \alpha( 0) R \\
	& = - \sup \big \{  h( \mu ):\mu \in \mathscr{M}_1(\R ), m_V( \mu ) \leq R, m_p( \mu ) = x   \big \}  + \varphi_V( \alpha( 0), 0) - \alpha( 0) R,
	\end{align*}
	where we note that the supremum is a maximum, since it is of the form given in Equation \eqref{EqRestrMaxEntr} in the maximum entropy principle (see Lemma \ref{LemMaxEntr}). Another application of the contraction principle to $ ( n^{-1} || X^{ n,V}_R ||_p^p )_{n \in \N }$ via the continuous mapping $ x \mapsto x^{1/p}$ gives us the LDP for $ ( n^{-1/p} || X^{ n,V}_R ||_p )_{n \in \N }$ with the GRF $ J_p^{V} : \R \rightarrow [0,\infty)$ given by
	\begin{equation*}
	J_q^{V} ( x) = 
	\begin{cases} 
	-\max_{ \alpha < 0 }  \big \{ \alpha - \log \int_{\R } e^{ \alpha V(t)} dt   \big \} - \max \big \{ h( \nu ) : \nu \in \mathscr{M}_1(\R ) , m_p( \nu )^{1 /p} = x , m_V( \nu ) \leq  R  \big \} &: x \geq 0  \\
	\infty  &: x <0.
	\end{cases} 
	\end{equation*}
	Here we used Lemma \ref{AuxLemmaExpTilt}, where we showed that 
	\begin{equation*}
	\varphi_V( \alpha( 0), 0) - \alpha( 0) R = - \max_{ \alpha < 0 } \Big \{ \alpha R - \log \int_{\R } e^{ \alpha V(t)} dt   \Big \} .
	\end{equation*} 
	This completes the proof.  
\end{proof}

\section{Proof of Theorem \ref{ThmLimCondDist}} 

We prove Theorem \ref{ThmLimCondDist} by employing the Gibbs conditioning principle (see Proposition \ref{PropGibbsCond}). This leads to the maximization of the entropy function $h$ under certain moment constraints.
\par{}
For an Orlicz function $V$ and $\alpha \in (-\infty , 0)$, we recall the Gibbs distribution $ \mu_{V, \alpha }$ with potential $V$ and critical temperature $\alpha$, which is given by the Lebesgue density
\begin{equation*}
\frac{d \mu_{ V, \alpha }}{dx}(x) =  e^{ \alpha V(x)  - \varphi_V( \alpha, 0) } , \quad x \in \R.
\end{equation*}
Here, $e^{ - \varphi_V( \alpha, 0) }$ is the normalizing constant, i.e., $ \varphi_V( \alpha, 0 ) = \log \int_{\R} e^{ \alpha V(t)} dt $. 
Note that in Section 4, $\alpha = \alpha( \lambda)$ depends implicitly on a parameter $ \lambda$. In the following, we will only face the situation where $ \lambda \equiv 0$ and $ \alpha = \alpha( R)$ depends on a parameter $R \in (0, \infty)$.

\begin{lem}
	\label{LemMonotonepMoment}
	Let $V_1, V_2$ be two Orlicz functions that satisfy both
	\begin{equation*}
	\frac{V_1(x)}{V_2(x)} \longrightarrow \infty  \quad \text{ as } \quad  |x| \longrightarrow \infty \qquad \text{and} \qquad 	\int_{\R } V_1(x) e^{ \alpha V_2(x)} dx < \infty, 
	\end{equation*}
	for all $ \alpha \in  (-\infty, 0 )$.
	Then, there exists an $ \overline{R}  \in (0,\infty)$ such that for all $0 < R \leq \overline{R}$, we have
	\begin{equation}
	\label{EqIneqpthMoment}
	m_{V_1}( \mu_{ V_2, \alpha( R )}) \leq m_{V_1}( \mu_{ V_2, \alpha(\overline{R} )} )= 1,
	\end{equation}
	where $ \alpha( R )$ is the solution of
	\begin{equation}
	\label{EqImplicitbeta}
	R = \int_{ \R } V_2(x)  \mu_{V_2, \alpha( R )} (dx).
	\end{equation} 
\end{lem}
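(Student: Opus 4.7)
The plan is to reduce the assertion to a monotonicity and intermediate-value statement for the one-parameter family
\[
g(\alpha) := m_{V_1}(\mu_{V_2, \alpha}) = \frac{\int_{\R} V_1(x) e^{\alpha V_2(x)}\,dx}{\int_{\R} e^{\alpha V_2(x)}\,dx}, \qquad \alpha \in (-\infty, 0),
\]
which is finite on this range by the second hypothesis in \eqref{EqCondOrliczFct}, and then to transport the answer back via the reparametrisation $\alpha = \alpha(R)$. As a preliminary, I would invoke Lemma \ref{AuxLemmaExpTilt} with potential $V := V_2$ and $\lambda \equiv 0$ to conclude that
\[
R = \frac{d}{d\alpha}\varphi_{V_2}(\alpha, 0)\Big|_{\alpha = \alpha(R)} = m_{V_2}(\mu_{V_2, \alpha(R)}),
\]
with strictly positive second derivative in $\alpha$; this makes $R \mapsto \alpha(R)$ a continuous, strictly increasing bijection from $(0, \infty)$ onto $(-\infty, 0)$.

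Next I would show that $g$ is itself continuous, strictly increasing, and satisfies $\lim_{\alpha \to -\infty} g(\alpha) = 0$ and $\lim_{\alpha \to 0^-} g(\alpha) = +\infty$. Continuity follows from dominated convergence using the integrability hypothesis. For strict monotonicity I would differentiate under the integral sign to get
\[
g'(\alpha) = \mathrm{Cov}_{\mu_{V_2, \alpha}}(V_1, V_2),
\]
and show its strict positivity by a standard symmetrisation: with $X, Y$ independent of law $\mu_{V_2, \alpha}$,
\[
2\,\mathrm{Cov}(V_1(X), V_2(X)) = \mathbb{E}\bigl[(V_1(X) - V_1(Y))(V_2(X) - V_2(Y))\bigr] \geq 0,
\]
since every Orlicz function is symmetric and strictly increasing on $[0, \infty)$, so both differences carry the sign of $|X| - |Y|$; the inequality is strict because $V_1, V_2$ are nonconstant and $|X|$ has a continuous law. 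The limit $g(\alpha) \to 0$ as $\alpha \to -\infty$ I would extract from the exponential concentration of $\mu_{V_2, \alpha}$ at $0$, where $V_1$ vanishes, by an argument running parallel to the verification of $\frac{d}{d\alpha}\varphi_V(\alpha, 0) \to 0$ already carried out in the proof of Lemma \ref{AuxLemmaExpTilt}. For $\alpha \to 0^-$, the first hypothesis in \eqref{EqCondOrliczFct} lets me choose, for any prescribed $C > 0$, a threshold $M$ with $V_1 \geq C V_2$ on $\{|x| > M\}$, whence
\[
g(\alpha) \geq C \int_{|x| > M} V_2(x)\,\mu_{V_2, \alpha}(dx) \geq C\Bigl( R(\alpha) - \max_{|x| \leq M} V_2(x)\Bigr),
\]
and since $R(\alpha) \to \infty$ by the first step, this forces $g(\alpha) \to \infty$.

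With these three ingredients in hand, continuity, strict monotonicity and the endpoint values single out a unique $\overline{\alpha} \in (-\infty, 0)$ with $g(\overline{\alpha}) = 1$, and I would define $\overline{R}$ by $\alpha(\overline{R}) = \overline{\alpha}$. For any $0 < R \leq \overline{R}$ the monotonicity of $\alpha(\cdot)$ gives $\alpha(R) \leq \overline{\alpha}$, and then the monotonicity of $g$ yields $m_{V_1}(\mu_{V_2, \alpha(R)}) = g(\alpha(R)) \leq g(\overline{\alpha}) = 1$, which is exactly the claim. The main technical obstacle I anticipate is the pair of limit computations at the two endpoints of $(-\infty, 0)$, where the two explicit hypotheses of the lemma are used in an essential way; once the covariance identity is written down, the monotonicity and the final concatenation are essentially automatic.
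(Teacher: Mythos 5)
Your proposal is correct and follows the same overall skeleton as the paper's proof: locate $\overline{\alpha}\in(-\infty,0)$ with $m_{V_1}(\mu_{V_2,\overline{\alpha}})=1$ (via continuity and the endpoint limits $0$ and $\infty$, exactly as the paper does "analogously to the first part of Lemma \ref{AuxLemmaExpTilt}"), set $\overline{R}:=m_{V_2}(\mu_{V_2,\overline{\alpha}})$, and then combine the monotonicity of $\alpha\mapsto m_{V_2}(\mu_{V_2,\alpha})$ with the monotonicity of $\alpha\mapsto m_{V_1}(\mu_{V_2,\alpha})$. The one step where you genuinely diverge is the proof of this last monotonicity. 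The paper argues pointwise: for $\alpha_1\le\alpha_2$ the two Gibbs densities cross exactly once on $[0,\infty)$, and a layer-cake (Fubini) rewriting of $\int V_1\,d\mu_{V_2,\alpha}$ then lets one compare the two integrals term by term without ever differentiating in $\alpha$. You instead differentiate under the integral sign and identify $g'(\alpha)=\mathrm{Cov}_{\mu_{V_2,\alpha}}(V_1,V_2)\ge 0$ by the symmetrisation/Chebyshev correlation inequality for the comonotone functions $V_1,V_2$ of $|x|$. Your route is arguably more transparent and gives \emph{strict} monotonicity (hence uniqueness of $\overline{R}$) essentially for free, but it carries one small obligation the paper's argument avoids: to justify the interchange of derivative and integral you need $\int_{\R}V_1(x)V_2(x)e^{\alpha V_2(x)}\,dx<\infty$ locally uniformly in $\alpha$, which does follow from the standing hypothesis since $V_2e^{(\alpha-\alpha')V_2}$ is bounded for $\alpha<\alpha'<0$, but should be stated. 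The endpoint limits you sketch ($g\to 0$ as $\alpha\to-\infty$ by concentration at the origin, $g\to\infty$ as $\alpha\to 0^-$ via $V_1\ge CV_2$ outside a compact set and $m_{V_2}(\mu_{V_2,\alpha})\to\infty$) are sound and match what the paper implicitly invokes.
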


\begin{rem}
The quantity $ \alpha(R ) \in (-\infty, 0)$ from Equation \eqref{EqImplicitbeta} exists and is unique by Lemma \ref{AuxLemmaExpTilt}.
\end{rem}

\begin{proof}(of Lemma \ref{LemMonotonepMoment})
    Analogously to the first part of the proof of Lemma \ref{AuxLemmaExpTilt}, we can show that there exists an $ \overline{ \alpha} \in (-\infty, 0)$ such that
    \begin{equation*}
    \int_{ \R  } V_1(x) e^{ \overline{ \alpha}  V_2(x) - \varphi_{V_2}( \overline{\alpha} , 0) } dx = 1.
    \end{equation*}   
    We define 
    \begin{equation}
    \label{EqDefRbar}
    \overline{ R }  := \int_{ \R  } V_2(x) e^{ \overline{ \alpha}  V_2(x) - \varphi_{V_2}( \overline{\alpha} , 0) } dx  \in (0, \infty ).
    \end{equation}
    Since the solution $\overline{\alpha}$ of Equation \eqref{EqDefRbar} is unique by Lemma \ref{AuxLemmaExpTilt}, we can write $ \overline{\alpha} =: \alpha( \overline{R})$. Now take some $ R > 0$ with $ R \leq \overline{ R } $. For the solution $ \alpha( R ) $ of Equation \eqref{EqImplicitbeta}, we have that $ \alpha( R) \leq \alpha (\overline{R }) $. This holds since the map $ \alpha \mapsto \int_{ \R  } V_2(x) e^{  \alpha  V_2(x) - \varphi_{V_2}( \alpha , 0) } dx $ is monotone increasing (see, e.g., p. 5 in \cite{PJ2020OrliczLim}). Thus, if we are able to show that the function
	\begin{equation*}
	\alpha \mapsto \int_{ \R  } V_1(x) e^{  \alpha  V_2(x) - \varphi_{V_2}( \alpha , 0) } dx
	\end{equation*}
	is monotone increasing on $(-\infty,0)$, we obtain the claim in Equation \eqref{EqIneqpthMoment} since $\alpha(R) \leq \alpha( \overline{ R })= \overline{ \alpha}$. Take $ \alpha_1 , \alpha_2 $ with $- \infty < \alpha_1 \leq \alpha_2 < 0$. Then, there exists an $ \overline{x} \geq 0$ such that
	\begin{equation}
	\label{EqPropertyXbar1}
	e^{ \alpha_1 V_2(x) - \varphi_{V_2}( \alpha_1 , 0 ) } \geq e^{ \alpha_2 V_2(x) - \varphi_{V_2}( \alpha_2 , 0 ) }  \quad \text{for} \quad x \in [0, \overline{x} ]
	\end{equation}
     and
	\begin{equation}
	\label{EqPropertyXbar2}
	e^{ \alpha_1 V_2(x) - \varphi_{V_2}( \alpha_1 , 0 ) } \leq e^{ \alpha_2 V_2(x) - \varphi_{V_2}( \alpha_2 , 0 ) } \quad \text{for} \quad x >  \overline{x}.
	\end{equation}
	In the following, we denote by $V_1^{-1}$ the inverse of $V_1$ restricted on the positive real numbers, which exists since $V_1$ is an Orlicz function. Then we get
	\begin{align*}
	& \int_{ \R  } V_1(x) e^{ \alpha_1 V_2(x) - \varphi_{V_2}( \alpha_1 , 0 ) } dx\cr 
	& = 2 \int_{ 0 }^{ \infty } \int_{ V_1(x)}^{ \infty } e^{ \alpha_1 V_2(t) - \varphi_{V_2}( \alpha_1 , 0 ) }  dt dx  \\
	& = 2 \int_{ 0 }^{ V_1^{-1}( \overline{x})} \int_{ V_1(x)}^{ \infty } e^{ \alpha_1 V_2(t) - \varphi_{V_2}( \alpha_1 , 0 ) }  dt dx + 2 \int_{ V_1^{-1}(\overline{x})}^{ \infty } \int_{ V_1(x)}^{ \infty } e^{ \alpha_1 V_2(t) - \varphi_{V_2}( \alpha_1 , 0 ) } dt dx \\
	&  = 2 \int_{ 0 }^{ V_1^{-1}( \overline{x})} \Big[  \frac{1}{2} - \int_{ 0}^{ V_1(x)} e^{ \alpha_1 V_2(t) - \varphi_{V_2}( \alpha_1 , 0 ) } dt \Big ]  dx + 2 \int_{ V_1^{-1} (\overline{x})}^{ \infty } \int_{ V_1(x)}^{ \infty } e^{ \alpha_1 V_2(t) - \varphi_{V_2}( \alpha_1 , 0 ) } dt dx \\
	& \stackrel{ \eqref{EqPropertyXbar1}, \eqref{EqPropertyXbar2}}{\leq}
	2 \int_{ 0 }^{ V_1^{-1}(\overline{x})} \Big[  \frac{1}{2} - \int_{ 0}^{ V_1(x) } e^{ \alpha_2 V_2(t) - \varphi_{V_2}( \alpha_2 , 0 ) } dt \Big ]  dx + 2 \int_{ V_1^{-1}(\overline{x})}^{ \infty } \int_{ V_1(x)}^{ \infty } e^{ \alpha_2 V_2(t) - \varphi_{V_2}( \alpha_2 , 0 ) } dt dx \\
	& = \int_{ \R  } V_1(x) e^{ \alpha_2 V_2(x) - \varphi_{V_2}( \alpha_2 , 0 ) } dx.
	\end{align*}
\end{proof}

\begin{rem}
	\label{RemMonotoneVthMomentmap}
	In the proof of Lemma \ref{LemMonotonepMoment} we have shown that $ \alpha \mapsto m_{V_1}( \mu_{ V_2, \alpha })$ is monotone increasing. The same argument can be used to prove that $ \alpha \mapsto m_{V_2}( \mu_{ V_1, \alpha })$ and $ \alpha \mapsto m_{V_1}( \mu_{ V_1, \alpha })$ are monotone increasing. Here, note that the latter two functions are well-defined even if we do not assume that
	\begin{equation*}
	\int_{\R} V_1(x) e^{ \alpha V_2(x) - \varphi_{V_2}( \alpha, 0)} dx < \infty 
	\end{equation*}
	for all $\alpha \in (-\infty, 0 )$.
\end{rem}

In order to get a nice geometric interpretation of Theorem \ref{ThmLimCondDist}, we need the following limit theorem from \cite{PJ2020OrliczLim}. We recall that $ \lambda_{R,V}^{(n,k)}$ is the distribution of the first $k$ components of a random vector uniformly distributed on the Orlicz ball $ \mathbb{B}_R^{n,V}$, where $R \in (0, \infty )$. The distribution $\lambda_{R,V}^{(n,k)} $ has a Lebesgue density on $\R^k$ (see p.6 in \cite{PJ2020OrliczLim}) which we shall denote by $\frac{d \lambda_{R,V}^{(n,k)}}{dx}$. Further, for an Orlicz function $V$ and $\alpha \in (-\infty,0)$, we denote by $  \mu_{V, \alpha }^{ \otimes k}$ the $k$-fold product measure of $ \mu_{V, \alpha }$.

\begin{proposition}(Theorem A in \cite{PJ2020OrliczLim})
	\label{PropLimUnifOrlicz}
	Let $V$ be an Orlicz function, $\alpha \in (- \infty , 0)$, $R \in (0, \infty )$, $k \in \N$.
    Then, there exists a constant $C \in (0, \infty )$ such that
	\begin{equation}
	\label{EqTVmugamma}
	\int_{ \R^k} \Big | \frac{d \lambda_{R,V}^{(n,k)}}{dx} (x) -  \frac{d\mu_{V, \alpha(R)}^{ \otimes k } (x)} { dx}  \Big | d x \leq \frac{C k}{n},
	\end{equation}
	where $ \alpha( R) \in (- \infty, 0)$ is the unique solution of
	\begin{equation}
	\label{EqImplicitalpha}
	R = \int_{ \R } V(x) e^{ \alpha(R) V(x) - \varphi_V( \alpha(R) , 0)} d x.
	\end{equation} 
\end{proposition}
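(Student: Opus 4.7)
The plan is to write the density $\frac{d\lambda_{R,V}^{(n,k)}}{dx}$ explicitly as a ratio of Orlicz ball volumes, apply the exponential tilting computation already used in the proof of Theorem \ref{ThmLDPEmpMeasuresOrlicz}, and then analyse the resulting expression via a local central limit theorem. Integrating out the last $n-k$ coordinates yields, with $V^*(x) := \sum_{i=1}^k V(x_i)$,
\begin{equation*}
\frac{d\lambda_{R,V}^{(n,k)}}{dx}(x) = \frac{\vol_{n-k}\big(\mathbb{B}_{R_n(x)}^{n-k,V}\big)}{\vol_n\big(\mathbb{B}_R^{n,V}\big)}\,\mathbb{1}_{\{V^*(x) \leq Rn\}}, \qquad R_n(x) := \frac{Rn - V^*(x)}{n-k}.
\end{equation*}
Taking $Y_1,Y_2,\ldots$ i.i.d.\ with density $\mu_{V,\alpha(R)}$ (so $\E[V(Y_i)] = R$) and $T_m := \sum_{i=1}^m V(Y_i) - mR$, the same Gibbs tilting argument as in the proof of Theorem \ref{ThmLDPEmpMeasuresOrlicz} rewrites every Orlicz ball volume as
\begin{equation*}
\vol_m\big(\mathbb{B}_r^{m,V}\big) = e^{m[\varphi_V(\alpha(R),0)\,-\,\alpha(R) R]}\,\Phi_m\bigl(m(r-R)\bigr), \qquad \Phi_m(a) := \E\bigl[e^{-\alpha(R) T_m}\,\mathbb{1}_{\{T_m \leq a\}}\bigr].
\end{equation*}
The key point is that tilting at the fixed parameter $\alpha(R)$ (rather than at the $r$-dependent saddle $\alpha(r)$) produces an exponential prefactor depending only on $R$, which therefore cancels exactly in the numerator/denominator ratio.

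Combining the two displays and using $(n-k)(R_n(x) - R) = Rk - V^*(x)$, the target density $g(x) := \frac{d\mu_{V,\alpha(R)}^{\otimes k}}{dx}(x) = e^{\alpha(R) V^*(x) - k\varphi_V(\alpha(R),0)}$ absorbs precisely the leftover exponential, giving the identity
\begin{equation*}
\frac{1}{g(x)}\,\frac{d\lambda_{R,V}^{(n,k)}}{dx}(x) = e^{\alpha(R)(Rk - V^*(x))}\cdot\frac{\Phi_{n-k}\bigl(Rk - V^*(x)\bigr)}{\Phi_n(0)}.
\end{equation*}
The proof then reduces to an asymptotic analysis of $\Phi_m(a)$ via an Edgeworth expansion for $T_m$. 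The required regularity is in place: $\sigma^2 := \mathbb{V}_{\mu_{V,\alpha(R)}}[V(Y_1)]$ is finite because $V(Y_1)$ has all polynomial moments under $\mu_{V,\alpha(R)}$, and $V(Y_1)$ admits a Lebesgue density, hence is non-lattice. The LCLT gives $\Phi_m(a) = \frac{e^{-\alpha(R) a}}{|\alpha(R)|\sigma\sqrt{2\pi m}}\bigl(1 + \Theta/\sqrt{m} + O(1/m)\bigr)$ with a leading Edgeworth coefficient $\Theta$ that is independent of $a$ to top order; in the ratio $\Phi_{n-k}(a)/\Phi_n(0)$ the exponentials $e^{\pm\alpha(R) a}$ and the $\Theta/\sqrt{m}$ corrections cancel, so that on the bulk region one obtains the pointwise estimate
\begin{equation*}
\frac{1}{g(x)}\,\frac{d\lambda_{R,V}^{(n,k)}}{dx}(x) = \sqrt{\tfrac{n}{n-k}}\,\Bigl(1 + O(k/n) + O\bigl((V^*(x) - Rk)^2/n\bigr)\Bigr),
\end{equation*}
the quadratic remainder arising from the second-order Taylor expansion of the free energy $F_V(r) := \varphi_V(\alpha(r),0) - \alpha(r) r$ about $r = R$ (using $F_V'(R) = -\alpha(R)$ via the envelope theorem).

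For the claimed $L^1$ bound, I would split the integration over $\R^k$ into a bulk region $\{|V^*(x) - Rk| \leq C\sqrt{k\log n}\}$ and its complement. On the bulk, the term $\sqrt{n/(n-k)} - 1 = k/(2n) + O(k^2/n^2)$ integrates against $g$ to $O(k/n)$, while the quadratic remainder contributes $n^{-1}\E_g[(V^*(X) - Rk)^2] = k\sigma^2/n = O(k/n)$. On the tail, the exponential decay of $g$ in $V^*(x)$ (guaranteed by $\alpha(R) < 0$ and the coercivity $V(x) \to \infty$ as $|x| \to \infty$) combined with the vanishing of $\frac{d\lambda_{R,V}^{(n,k)}}{dx}$ on $\{V^*(x) > Rn\}$ makes the contribution superpolynomially small in $n$. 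The main obstacle, as is typical for results at this level of precision, will be establishing the LCLT uniformly in the shift $a$ with an error small enough to survive integration against $g$: an Edgeworth-type refinement, rather than the bare local CLT, is needed so that the $O(1/\sqrt{m})$ corrections cancel in the ratio $\Phi_{n-k}(a)/\Phi_n(0)$ and the sharper rate $O(k/n)$ emerges.
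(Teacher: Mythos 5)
The paper does not actually prove this proposition: it is imported verbatim as Theorem~A of \cite{PJ2020OrliczLim}, so there is no internal argument to compare yours against. What you have written is a from-scratch reconstruction of the strategy of that reference: write the marginal density as a ratio of Orlicz-ball volumes, tilt numerator and denominator by the \emph{same} Gibbs density $\mu_{V,\alpha(R)}$ so that the free-energy prefactors cancel, and reduce everything to the ratio $\Phi_{n-k}(Rk-V^*(x))/\Phi_n(0)$, which is then handled by a local CLT with Edgeworth correction. The algebra is correct throughout (the marginal-density formula, the identity $(n-k)(R_n(x)-R)=Rk-V^*(x)$, and the cancellation against $g$), and the final bookkeeping --- $|\sqrt{n/(n-k)}-1|=O(k/n)$ plus $n^{-1}\E_g[(V^*-Rk)^2]=k\sigma^2/n$ on the bulk, superpolynomially small tails --- does yield the claimed $Ck/n$.

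Two caveats. First, your displayed expansion $\Phi_m(a) = \tfrac{e^{-\alpha a}}{|\alpha|\sigma\sqrt{2\pi m}}\bigl(1+\Theta/\sqrt m+O(1/m)\bigr)$ omits the Gaussian factor $e^{-a^2/(2m\sigma^2)}$, which cannot be absorbed into the error uniformly over $|a|\lesssim\sqrt{k\log n}$. Because you tilt at the \emph{fixed} parameter $\alpha(R)$ rather than at the moving saddle $\alpha(R_n(x))$, the quadratic remainder $O((V^*(x)-Rk)^2/n)$ you later invoke comes precisely from this Gaussian factor, not from a Taylor expansion of the free energy --- which, as you yourself emphasize, cancels exactly in your setup. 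The term has the right size either way, but as literally written the two halves of the argument do not match. Second, as you concede, the entire analytic weight rests on an Edgeworth-corrected local limit theorem for $T_m$ that is uniform in the shift and whose error survives integration against $g$; verifying this (together with the non-lattice/density property of $V(Y_1)$, which does hold since an Orlicz function is strictly increasing on $[0,\infty)$) is the substance of the proof in \cite{PJ2020OrliczLim} and is only signposted here. In short: a correct outline that matches the known proof of the cited result, with the hard core deferred.
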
 

\begin{rem}
	\label{RemConDistOrlicGibbs}
	In particular, Equation \eqref{EqTVmugamma} in Proposition \ref{PropLimUnifOrlicz} implies convergence in distribution of $  \DistkKompOrl  $ towards $ \mu_{V, \alpha(R)} ^{ \otimes k }$ as $n \rightarrow \infty$.
\end{rem}

\begin{rem}
	In \cite{PJ2020OrliczLim}, the authors considered a more general class of Orlicz functions, for our purpose the version stated Proposition \ref{PropLimUnifOrlicz} suffices.
\end{rem}

\begin{proposition} 
\label{PropMainRes}
Let $V_1$ and $V_2$ be two Orlicz functions which satisfy
	\begin{equation*}
	\frac{V_1(x)}{V_2(x)} \longrightarrow \infty  \quad{} \text{as} \quad{} |x| \rightarrow \infty. 
	\end{equation*}
Suppose that $X_1^{ n,V_1 } \sim \U(\mathbb{B}^{n,V_1}_{1})$ and denote by $( \mathcal{L}^{n,V_1})_{n \in \N}$ the associated sequence of empirical measures. For $R \in (0, \infty )$, we define $C := [0, R ] \subseteq \R$ and for $ \eps > 0$, let $C_{ \eps } := [ - \eps , R + \eps ]$. Then, the measure
	\begin{equation}
	\label{EqOptMeas}
	\nu_{*} := \arg \max \big \{ h( \nu ) : m_{V_1}( \nu ) \leq 1, m_{V_2}( \nu ) \in C \big \} ,
	\end{equation}  
	which is the measure maximizing the entropy under the constraints $m_{V_1}( \nu ) \leq 1$, $ m_{V_2}(\nu) \in C$, exists, is unique and
	\begin{equation}
	\label{EqPropLimitEmpMeasNu}
	\lim_{ \eps \rightarrow 0} \lim_{n \rightarrow \infty } \prb[ \mathcal{L}^{n,V_1} \in \cdot | m_{V_2}( \mathcal{L}^{n,V_1} ) \in C_{ \eps}   ] = \delta_{ \nu_{ *}}( \cdot ),
	\end{equation}
	where $ \delta_{ \nu_{*} } : \mathcal{M}_1( \R) \rightarrow \R $ with
	\begin{equation*}
	\delta_{ \nu_{*} }( \mu ) = \begin{cases}
	1 & : \mu = \nu_{*} \\
	0 & \text{else}.
	\end{cases}
	\end{equation*} 
	Moreover, for $k \in \N $,
	\begin{equation}
	\label{EqPropMeasNu} 
	\prb \Big [ \big  ( X^{ n,V_1}_1(1),..., X^{ n,V_1 }_1(k) \big ) \in \cdot \Big | \frac{1}{n} \sum_{ i=1}^n V_2 \big ( X^{n,V_1}_1(i) \big )  \in C_{ \eps } \Big ]  \stackrel{d}{ \longrightarrow } \nu_{ *}^{ \otimes k} ,
	\end{equation}
	as $n \rightarrow \infty $ followed by $\eps \rightarrow 0$ and $\nu_{ *}^{ \otimes k}$ is the $k$-fold product measure of $\nu_{ *}$. 
\end{proposition}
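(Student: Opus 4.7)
The plan is to deduce the statement from the Sanov-type LDP of Theorem \ref{ThmLDPEmpMeasuresOrlicz}, combined with the Gibbs conditioning principle in Proposition \ref{PropGibbsCond} and the propagation-of-chaos characterization in Proposition \ref{propPropchaos}.

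First, since $V_1 / V_2 \to \infty$ at infinity, Theorem \ref{ThmLDPEmpMeasuresOrlicz} applies with radius $1$ and auxiliary Orlicz function $W = V_2$, yielding an LDP for $(\mathcal{L}^{n,V_1})_{n\in\N}$ in the generalized Wasserstein space $(\mathscr{M}_1^{V_2}(\R), d_{V_2})$ with the strictly convex good rate function $\I_{V_1}$. In this topology the functional $m_{V_2} : \mathscr{M}_1^{V_2}(\R) \to \R$ is continuous (Remark \ref{RemContinuityVthmomentmap}), so both $F = \{\mu : m_{V_2}(\mu) \in C\}$ and $F_\eps = \{\mu : m_{V_2}(\mu) \in C_\eps\}$ are closed, $F = \bigcap_{\eps > 0} F_\eps$, and since $C$ lies in the interior of $C_\eps$ we obtain $F \subseteq F_\eps^\circ$.

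To apply the Gibbs conditioning principle I would verify the two remaining hypotheses. The probability $\prb[\mathcal{L}^{n,V_1} \in F_\eps]$ is positive because the uniform law on $\mathbb{B}_1^{n,V_1}$ puts positive mass on a neighbourhood of the origin, which contributes to every $F_\eps$. The finiteness $\I_{V_1}(F) < \infty$ is witnessed by a Gibbs measure $\mu_{V_1, \beta}$ with $\beta \leq \alpha(0)$ chosen sufficiently negative: by the monotonicity in Remark \ref{RemMonotoneVthMomentmap} both $m_{V_1}(\mu_{V_1,\beta})$ and $m_{V_2}(\mu_{V_1,\beta})$ decrease as $\beta \to -\infty$, so one can simultaneously arrange $m_{V_1}(\mu_{V_1,\beta}) \leq 1$ and $m_{V_2}(\mu_{V_1,\beta}) \in [0,R]$, while $\I_{V_1}$ is finite on such Gibbs measures. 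Proposition \ref{PropGibbsCond} then gives concentration of the conditional law of $\mathcal{L}^{n,V_1}$ on the set of minimizers of $\I_{V_1}$ over $F$.

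Next, I would identify this minimizer. By Remark \ref{RemGRFOrl}, on $\{m_{V_1}(\mu) \leq 1\}$ the rate function reduces to an affine function of $-h(\mu)$, so minimizing $\I_{V_1}$ under the constraints $m_{V_1}(\mu) \leq 1$ and $m_{V_2}(\mu) \in C$ is equivalent to maximizing $h$ under the same constraints. Existence of a maximizer follows from the compactness of the sub-level sets of $\I_{V_1}$ in $(\mathscr{M}_1^{V_2}(\R), d_{V_2})$ and lower semicontinuity, while uniqueness is immediate from the strict convexity of $\I_{V_1}$ asserted in Theorem \ref{ThmLDPEmpMeasuresOrlicz}. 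The unique minimizer is therefore $\nu_*$, and the singleton form of the Gibbs conditioning principle delivers \eqref{EqPropLimitEmpMeasNu}.

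Finally, for \eqref{EqPropMeasNu}, the conditional law of $X_1^{n,V_1}$ given $n^{-1} \sum_{i=1}^n V_2(X_1^{n,V_1}(i)) \in C_\eps$ is exchangeable, since both the uniform law on $\mathbb{B}_1^{n,V_1}$ and the conditioning event are invariant under coordinate permutations. Equation \eqref{EqPropLimitEmpMeasNu} is exactly the weak convergence condition \eqref{EqLimitEmpMeasuresMuChaotic} for this conditional law, so Proposition \ref{propPropchaos} yields that the family of conditional laws is $\nu_*$-chaotic; by Remark \ref{RemWeakconvergence} this gives weak convergence of the $k$-coordinate marginals to $\nu_*^{\otimes k}$. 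The main technical subtlety I anticipate is constructing the feasible measure with finite rate needed for the Gibbs conditioning principle, which rests on the monotonicity of the Orlicz moment maps in Remark \ref{RemMonotoneVthMomentmap}; a secondary point is that one must work throughout in the $d_{V_2}$-topology rather than the weak topology, so that $m_{V_2}$ is continuous and the level sets of $\I_{V_1}$ are compact.
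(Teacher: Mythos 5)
Your proposal is correct and follows essentially the same route as the paper: the LDP in $(\mathscr{M}_1^{V_2}(\R), d_{V_2})$ from Theorem \ref{ThmLDPEmpMeasuresOrlicz}, verification of the hypotheses of the Gibbs conditioning principle with a Gibbs measure $\mu_{V_1,\beta}$ (for $\beta$ sufficiently negative) as the feasible point of finite rate, strict convexity for uniqueness, Remark \ref{RemGRFOrl} to convert rate minimization into entropy maximization, and Proposition \ref{propPropchaos} for the marginals. The only cosmetic difference is that you justify $\prb[\mathcal{L}^{n,V_1}\in F_\eps]>0$ by a direct positive-volume argument near the origin, whereas the paper derives it from the LDP lower bound together with $\I_{V_1}(F)<\infty$; both are valid.
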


\begin{proof}
	For the proof we mainly use the Gibbs conditioning principle from Proposition \ref{PropGibbsCond}. Due to our assumptions, $ (\mathcal{L}^{n,V_1})_{ n \in \N }$ satisfies an LDP in $(\mathscr{M}^{V_2}_1( \R ), d_{V_2} ) $ by Theorem \ref{ThmLDPEmpMeasuresOrlicz}. Further, we recall that the map $m_{V_2}: \mathscr{M}^{V_2}_1( \R ) \rightarrow \R $ is continuous. Then, for $\eps > 0$, the set $ F_{ \eps } :=\big  \{ \nu \in \mathscr{M}^{V_2}_1( \R ) \ | \ m_{V_2}( \nu ) \in C_{ \eps } \big \} = m_{V_2}^{-1}( [  - \eps , R + \eps ]) $ is closed in the $d_{V_2}$-topology on $\mathscr{M}^{V_2}_1( \R )$. Furthermore, the set $F:= \bigcap_{ \eps > 0} F_{ \eps }$ is closed as an intersection of closed sets and, for $\eps > 0$, we have 
	\begin{equation*}
	F = m_{V_2}^{-1}(C )  \subseteq m_{V_2}^{-1} ( C_{ \eps }^{ \circ }) \subseteq [  m_{V_2}^{-1} ( C_{ \eps } )]^{ \circ } = F_{ \eps }^{ \circ },
	\end{equation*}
	where the second inclusion holds since $m_{V_2}$ is continuous. This shows properties $(2)$ and $(4)$ of Proposition \ref{PropGibbsCond}. For Property $(3)$ we need to check that $ \prb [  \mathcal{L}^{n,V_1} \in F_{ \eps } ]> 0$ for all sufficiently large $n \in \N$ and all $\eps > 0$. Since $ (\mathcal{L}^{n,V_1})_{ n \in \N }$ satisfies an LDP in $(\mathscr{M}_1^{V_2} (\R),  d_{V_2} )$ with strictly convex GRF $ \mathbb{I}_{V_1}$ given in Equation \eqref{EqGRFLDPempMOrl}, we have that
	\begin{equation*}
	\liminf_{n \rightarrow \infty } \frac{1}{n} \log \prb [ \mathcal{L}^{n,V_1}  \in F_{ \eps } ] \geq - \inf_{ \nu \in F^{ \circ }_{ \eps }} \mathbb{I}_{V_1}( \nu ) \geq - \inf_{ \nu \in F} \mathbb{I}_{V_1}( \nu ).
	\end{equation*}
	If we are able to show that there exists a measure $\nu \in F $ with $  \mathbb{I}_{V_1}( \nu )  < \infty$, then it follows that $ \prb [  \mathcal{L}^{n,V_1} \in F_{\eps } ]> 0$ for sufficiently large $n \in \N$ and all $\eps > 0$. For $\alpha \in (-\infty,0)$, by Remark \ref{RemMonotoneVthMomentmap}, both mappings $ \alpha \mapsto m_{ V_1} ( \mu_{V_1, \alpha}) $ and $ \alpha \mapsto m_{V_2}( \mu_{ V_1, \alpha})$ are monotone increasing and we have that $ \lim_{ \alpha \rightarrow -\infty} m_{ V_1} ( \mu_{V_1, \alpha}) = \lim_{ \alpha \rightarrow -\infty} m_{ V_2} ( \mu_{V_1, \alpha}) = 0$. Thus, there exists an $ \overline{\alpha} \in (-\infty, 0)$ such that $ m_{V_1}( \mu_{ V_1, \overline{\alpha}}) \leq 1$ and $m_{V_2}( \mu_{ V_1, \overline{\alpha}}) \leq R $ and hence $ \mu_{ V_1, \overline{\alpha}} \in F$. The distribution $ \mu_{ V_1, \overline{ \alpha}}$ is absolutely continuous with respect to Lebesgue measure and thus by Remark \ref{RemGRFOrl} we have   
	\begin{equation*}
	\mathbb{I}_{V_1}( \mu_{ V_1, \overline{\alpha} }) = -h( \mu_{ V_1, \overline{\alpha} }) + c,
	\end{equation*}
	where $c \in \R$ denotes some finite constant and $h$ is the entropy function. For $ -h( \mu_{ V_1, \overline{\alpha} } )$, we have that
	\begin{align*}
	-h( \mu_{ V_1, \overline{\alpha} }) & = \int_{ \R }  e^{ \overline{ \alpha} V_1(x) - \varphi_{V_1}(\overline{ \alpha } , 0)} \log \Big( e^{ \overline{ \alpha} V_1(x) - \varphi_{V_1}(\overline{ \alpha } , 0)}  \Big) dx \\
	& = - \varphi_{V_1}(\overline{ \alpha } , 0) + \overline{ \alpha }  m_{V_1}( \mu_{ V_1, \overline{\alpha} } ) \\
	& <  \infty ,
	\end{align*}
	showing that $ \mathbb{I}_{V_1}(\mu_{ V_1, \overline{\alpha} }) < \infty$.
	\par{}
	For property $(1)$, we observe that $F$ is closed and convex. Indeed, for any $\nu_1, \nu_2 \in F$ and $\lambda \in [0,1]$, we have
	\begin{equation*}
	m_{V_2}( \lambda \nu_1 + (1- \lambda ) \nu_2) = \lambda m_{V_2}( \nu_1) +(1- \lambda ) m_{V_2}( \nu_2 ) \in [ 0 , R ],
	\end{equation*}
	where we used that $ m_{V_2}( \nu_i ) \in [ 0, R]$ for $i=1,2$. Moreover, we have shown that $ \inf_{ \nu \in F} \mathbb{I}_{V_1}( \nu ) < \infty$ and since $\mathbb{I}_{V_1}$ is a strictly convex GRF, $\mathbb{I}_{V_1}$ uniquely attains its minimum over such a set $F$. In total we get property $(1)$ from Proposition \ref{PropGibbsCond} and that $\mathcal{M}_F$ is a singleton with $\mathcal{M}_F = \{ \nu_{*}\}$, where
	\begin{equation*}
	\nu_{ *} = \arg \min \big  \{ \mathbb{I}_{V_1} ( \nu ) \ : \ \nu \in F \big \} = \arg \max \big \{  h ( \nu )  \ : \ \nu \in \mathscr{M}^{V_2}_1( \R ) , m_{V_1}( \nu ) \leq 1 , m_{V_2}( \nu ) \in C \big \} .
	\end{equation*}
	The equality uses Remark \ref{RemGRFOrl} and the observation that in the minimization only those $\nu \in \mathscr{M}^{V_2}_1( \R )$ are relevant which are absolutely continuous with respect to Lebesgue measure and $m_{V_1}( \nu ) \leq 1 $ and $m_{V_2}( \nu ) \in C$. The Gibbs conditioning principle (more precisely \eqref{EqGibbsCondPrSingl}) now gives us the limit in Equation \eqref{EqPropLimitEmpMeasNu}.
	We now want to employ Proposition \ref{propPropchaos} to derive the limit in Equation \eqref{EqPropMeasNu} from the limit 
	\begin{equation*}
	\lim_{ \eps \rightarrow 0} \lim_{n \rightarrow \infty } \prb \big[ \mathcal{L}^{n,V_1} \in \cdot \ \big | \ m_{V_2}( \mathcal{L}^{n,V_1} ) \in C_{ \eps}   \big] = \delta_{ \nu_{ *}}( \cdot ).
	\end{equation*}
	We introduce the family of measures $( \mu^n_{\eps })_{n \in \N , \eps > 0}$, where, for $n \in \N$ and $\eps > 0$, we define
	\begin{equation*}
	\mu_{ \eps }^n( \cdot ) := \prb \big[ X_1 ^{n,V_1} \in \cdot  \ \big |   \ X_1^{n,V_1} \in \mathbb{B}^{n, V_2}_{ R + \eps }\big ].
	\end{equation*}
	Here, for $n \in \N$, $X_1^{n,V_1} \sim \U( \mathbb{B}^{n, V_1}_1)$ (and hence, $ \mathcal{L}^{n,V_1}$ is the empirical measure associated to $X_1^{n,V_1} $). We note that the family $( \mu^n_{ \eps })_{n \in \N , \eps > 0}$ is a family of symmetric probability measures, since, by the symmetry of the Orlicz function $V_1$, $X^{n, V_1}_1$ has a symmetric distribution.
	
	For $n \in \N$ and $\eps > 0$, we consider random variables $X^{n}_{ \eps }$ with distribution $\mu^{n}_{ \eps }$ and let $ \mathcal{L}^{n}_{ \eps}$ be the associated empirical distribution, i.e.,
	\begin{equation*}
	\mathcal{L}^n_{ \eps } : = \frac{1}{n} \sum_{ i=1}^n \delta_{ X_{ \eps }^{n}(i)}.
	\end{equation*} 
	Then, we have
	\begin{align*}
	\delta_{ \nu_{ *}}( \cdot ) & =
	\lim_{ \eps \rightarrow 0} \lim_{n \rightarrow \infty } \prb \big[ \mathcal{L}^{n,V_1} \in \cdot \ \big |   \ m_{V_2}( \mathcal{L}^{n,V_1} ) \in C_{ \eps}   \big] \\
	&= \lim_{ \eps \rightarrow 0} \lim_{n \rightarrow \infty } \prb \big[ \mathcal{L}^{n,V_1} \in \cdot \ \big |   \ X^{n,V_1}_1 \in \mathbb{B}^{n, V_2}_{ R + \eps }   \big] 
	= \lim_{ \eps \rightarrow 0} \lim_{n \rightarrow \infty } \prb \big[  \mathcal{L}^n_{ \eps }  \in \cdot \big].
	\end{align*}
	Here, we used that the event $  m_{V_2}( \mathcal{L}^{n,V_1} ) \in C_{ \eps}  $ can be expressed as $ \ X^{n,V_1}_1 \in \mathbb{B}^{n, V_2}_{ R + \eps }  $. Thus, by Proposition \ref{propPropchaos} and Remark \ref{RemWeakconvergence}, we get that the marginal distribution of the first $k$ coordinates of $ \mu_{\eps}^n$ converges weakly to $ \nu_{ *}^{\otimes k}$. In other words, we have
	\begin{align*}
	\nu_{ *}^{\otimes k}( \cdot ) & =
	\lim_{ \epsilon \rightarrow 0} \lim_{ n \rightarrow \infty } \prb \big  [ ( X^n_{ \eps } (1),..., X^{n}_{ \eps }(k)) \in \cdot \big ] \\
	& = \lim_{ \epsilon \rightarrow 0} \lim_{ n \rightarrow \infty } \prb \big [ ( X^{n,V_1}_{ 1 } (1),..., X^{n,V_1}_{ 1 }(k)) \in \cdot \ \big |   \ X^{n,V_1}_1 \in \mathbb{B}^{n, V_2}_{ R + \eps } \big ] \\
	& = \lim_{ \epsilon \rightarrow 0} \lim_{ n \rightarrow \infty }  \prb \Big [ \big  ( X^{ n,V_1}_1(1),..., X^{ n,V_1 }_1(k) \big ) \in \cdot \Big | \frac{1}{n} \sum_{ i=1}^n V_2 \big ( X^{n,V_1}_1(i) \big )  \in C_{ \eps } \Big ].
	\end{align*}
	This completes the proof.
\end{proof}

In the preceding Proposition \ref{PropMainRes} we have seen that the limiting distribution $\nu_{*}$ follows from a maximum entropy principle. In order to compute $\nu_{*}$, we need the following classical result, which relies on Ex. 5.3 in \cite{ConvexOpt2004} and chapter 12 in \cite{EleOfInf2006}. We state it in the version given in \cite{RKConLim}.

\begin{lem}
	\label{LemMaxEntr}
	Fix $r_i : \R \rightarrow \R$, $\alpha_i \in \R $, for $i=1,...,m$ and $s_j: \R \rightarrow \R $, $\beta_j \in \R $, for $j=1,...,n$. Consider the following maximization problem
	\begin{align}
	\notag
	\max_{ \nu \in \mathscr{M}_1( \R )}  & h( \nu ) \\
	\label{EqRestrMaxEntr}
	 \text{subject to }  & \int_{\R} r_i( x) d \nu( x) = \alpha_i \text{ for } i=1,...,m \\
	\notag
	&  \int_{\R} s_j( x) d\nu(  x) \leq  \beta_j \text{ for } j=1,...,n.
	\end{align}
	Then, a probability measure $\nu_{*} \in \mathscr{M}_1( \R )$ attains the maximum in \eqref{EqRestrMaxEntr} if and only if it is of the following form
	\begin{equation}
		\label{EqEntrResprMaxMeas}
		 \nu_{*}(dx) = \exp \Big( -1 - \kappa^{*}_0 - \sum_{ i=1}^m \lambda_i^{*} r_i(x) - \sum_{ i=1}^n \mu_j^{*} s_j(x) \Big) dx,
	\end{equation}
	with non-negative constants $\kappa_0^{*} $, $( \lambda_i^{*})_{ i=1}^m$, $ ( \mu_j^{*})_{ j=1}^n$ chosen such that $\nu_{*}$ lies in $\mathscr{M}_1( \R )$ and satisfies the constraints in \eqref{EqRestrMaxEntr}. Moreover, if $\nu_{*}$ attains the maximum in \eqref{EqRestrMaxEntr}, then
	\begin{equation}
	\label{EqSlacknessCond}
	\Big(   \int_{\R} s_j( x) \nu_{*}( dx ) - \beta_{j} \Big) \mu_j^{*} =0 , 
	\end{equation}
	for all $j=1,...,n$.
\end{lem}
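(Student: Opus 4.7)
The plan is to treat this as a concave program: the differential entropy $h(\cdot)$ is strictly concave on the set of absolutely continuous probability measures, and both the equality and inequality constraints are affine in $\nu$, so the Karush--Kuhn--Tucker framework applies. I would introduce Lagrange multipliers $\kappa_0 \in \R$ for the normalization $\int d\nu = 1$, $\lambda_1,\dots,\lambda_m \in \R$ for the equality constraints, and $\mu_1,\dots,\mu_n \geq 0$ for the inequality constraints, and form the Lagrangian
\begin{equation*}
L(\nu;\kappa_0,\lambda,\mu) = h(\nu) - \kappa_0\Bigl(\int_{\R} d\nu - 1\Bigr) - \sum_{i=1}^m \lambda_i\Bigl(\int_{\R} r_i\, d\nu - \alpha_i\Bigr) - \sum_{j=1}^n \mu_j\Bigl(\int_{\R} s_j\, d\nu - \beta_j\Bigr).
\end{equation*}

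I would start with the sufficiency direction, which is the cleaner one. Suppose $\nu_*$ has a density of the form \eqref{EqEntrResprMaxMeas}, satisfies the constraints, and satisfies \eqref{EqSlacknessCond}. For any competing feasible $\nu \in \mathscr{M}_1(\R)$ which is absolutely continuous (otherwise $h(\nu)$ is effectively $-\infty$ and the bound is trivial), non-negativity of the relative entropy $H(\nu | \nu_*) \geq 0$ yields
\begin{equation*}
h(\nu) \;\leq\; -\int_{\R} \log\tfrac{d\nu_*}{dx}(x)\, d\nu(x) \;=\; 1 + \kappa_0^* + \sum_{i=1}^m \lambda_i^*\alpha_i + \sum_{j=1}^n \mu_j^*\int_{\R} s_j(x)\, d\nu(x),
\end{equation*}
where I substituted the explicit log-density of $\nu_*$ and used the equality constraints enjoyed by $\nu$. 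Because $\mu_j^* \geq 0$ and $\int s_j\, d\nu \leq \beta_j$, the right-hand side is bounded above by $1 + \kappa_0^* + \sum_{i} \lambda_i^*\alpha_i + \sum_{j} \mu_j^*\beta_j$, which equals $h(\nu_*)$ by the slackness condition \eqref{EqSlacknessCond}; hence $h(\nu)\leq h(\nu_*)$. Strict concavity of $t\mapsto -t\log t$ forces uniqueness of the maximizer.

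For the necessity direction, I would compute the first variation of $L$ along admissible signed perturbations $\varphi\,dx$ with $\int \varphi\,dx = 0$, set it to zero on $\mathrm{supp}(\nu_*)$, and obtain the pointwise Euler--Lagrange identity
\begin{equation*}
-\log\tfrac{d\nu_*}{dx}(x) - 1 - \kappa_0^* - \sum_{i=1}^m \lambda_i^*\, r_i(x) - \sum_{j=1}^n \mu_j^*\, s_j(x) = 0,
\end{equation*}
from which \eqref{EqEntrResprMaxMeas} follows by exponentiating. The sign constraints $\mu_j^* \geq 0$ and the complementary slackness \eqref{EqSlacknessCond} are the standard KKT conditions attached to the inequality constraints.

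The main technical subtlety, and the step I expect to require the most care, is justifying the first-variation argument in the infinite-dimensional setting: one must ensure that the class of admissible perturbations $\varphi$ is rich enough to pin down the Euler--Lagrange equation almost everywhere on $\mathrm{supp}(\nu_*)$, and that the functions $r_i$ and $s_j$ are sufficiently integrable against the exponential density in \eqref{EqEntrResprMaxMeas} for the Lagrangian itself to be well-defined. Since the statement is quoted verbatim from \cite{RKConLim}, a legitimate alternative is to invoke strong duality for this concave program (Slater's condition holds whenever a feasible measure of finite entropy exists, which is the case since a maximizer is assumed to exist) and defer the detailed verification to \cite{ConvexOpt2004} and \cite{EleOfInf2006}.
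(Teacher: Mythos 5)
The paper does not actually prove this lemma: it is quoted verbatim from \cite{RKConLim} and attributed to Ex.\ 5.3 in \cite{ConvexOpt2004} and Chapter 12 of \cite{EleOfInf2006}, so there is no in-paper argument to compare against. Judged on its own merits, your sufficiency argument is the standard and correct one: writing $h(\nu) = -H(\nu\,|\,\nu_*) - \int \log\frac{d\nu_*}{dx}\,d\nu$ and using $H(\nu\,|\,\nu_*)\geq 0$ together with the explicit log-density, the equality constraints, the signs $\mu_j^*\geq 0$, and complementary slackness closes the chain $h(\nu)\leq h(\nu_*)$. This is exactly the direction the paper actually uses (in the proofs of Theorem \ref{ThmLimCondDist}, Corollary \ref{CorThinshellOrl} and Corollary \ref{CorBigVtwoRadius} one exhibits a candidate Gibbs density of the form \eqref{EqEntrResprMaxMeas} satisfying the constraints and concludes it is the maximizer), so your complete half covers everything the paper needs. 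The necessity direction and the derivation of \eqref{EqSlacknessCond} remain a sketch: the first-variation argument requires knowing that the optimal density is a.e.\ positive (otherwise the Euler--Lagrange identity holds only on the support, while the exponential form is everywhere positive), and you rightly flag this; deferring it to the cited references, as the paper itself does, is acceptable.

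One point deserves explicit attention. Your sufficiency proof assumes the slackness condition \eqref{EqSlacknessCond} as a hypothesis, whereas the lemma's ``if'' clause only requires that $\nu_*$ have the form \eqref{EqEntrResprMaxMeas}, lie in $\mathscr{M}_1(\R)$ and be feasible. Without slackness your chain of inequalities does not close: one only gets $h(\nu)-h(\nu_*)\leq \sum_j \mu_j^*\bigl(\int s_j\,d\nu - \int s_j\,d\nu_*\bigr)$, which can be positive if some $\mu_j^*>0$ while $\int s_j\,d\nu_* <\beta_j$. So either the statement is to be read with slackness implicitly included among the KKT conditions (which is how \cite{RKConLim} and \cite{ConvexOpt2004} use it), or your proof establishes a slightly weaker ``if'' than the literal one. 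You should state explicitly that complementary slackness is part of the hypothesis of the sufficiency direction; with that reading your argument is correct, and it is also the reading under which the lemma is applied in this paper (there all $\mu_j^*$ attached to non-active constraints are taken to be $0$).
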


\begin{proof}(of Theorem \ref{ThmLimCondDist})
In Proposition \ref{PropMainRes} we have seen that the limit in distribution of the family of measures $ \CondDistVoneVtwoeps $ is given by $\nu_{*}^{ \otimes k}$, where $\nu_{ *}$ is given as
\begin{equation}
\label{EqMaxEntrPrinc}
\nu_{*} = \arg \max \big \{ h( \nu ) : m_{V_1}( \nu ) \leq 1, m_{V_2}( \nu ) \in [0, R ] \big  \}.
\end{equation}
By Lemma \ref{LemMonotonepMoment}, there exists a $\overline{R} \in (0, \infty )$ such that $ m_{V_1}( \mu_{ V_2, \alpha( \overline{R})})= 1$ and $ m_{V_2}( \mu_{ V_2, \alpha( \overline{R})})= \overline{R} $. We claim that for $0 < R \leq \overline{R}$, the maximizing measure in \eqref{EqMaxEntrPrinc} is the Gibbs distribution $\mu_{V_2, \alpha(R)}$ with Lebesgue density
\begin{equation*}
 \frac{d \mu_{V_2, \alpha(R)}}{dx}(x) = e^{ \alpha(R) V_2(x) - \varphi_{V_2}( \alpha(R) , 0)}, \quad x \in \R .
\end{equation*} 
First, we observe that the density of $ \mu_{V_2, \alpha(R)}  $ is of the form in Equation \eqref{EqEntrResprMaxMeas} from Lemma \ref{LemMaxEntr} (with the choice of $n=m=1$, $\kappa^{*}_0=1+ \varphi_{V_2}( \alpha(R) , 0) \in \R $, $ \lambda_1^{*} = - \alpha(R)$ and $ \mu_1^{*} = 0$). 
By assumption, we have that $R \leq \overline{R}$ and hence $m_{V_1}(\mu_{V_2, \alpha(R)}) \leq 1$ (see Equation \eqref{EqIneqpthMoment} in Lemma \ref{LemMonotonepMoment}) and by the choice of $\alpha(R)$, we get that $ m_{V_2}( \mu_{V_2, \alpha(R)} )= R$. Hence, by Lemma \ref{LemMaxEntr} we have that $\nu_{*} = \mu_{V_2, \alpha(R)}$. Let $d_{ w}$ be some metric inducing weak convergence of probability measures (e.g., the bounded Lipschitz metric). Then, by Proposition \ref{PropLimUnifOrlicz} and Remark \ref{RemConDistOrlicGibbs},
\begin{equation*}
d_{ w} \big ( \CondDistVoneVtwoeps , \lambda_{R,V_2}^{(n,k)} \big ) \leq d_{ w} \big ( \CondDistVoneVtwoeps , \mu_{V_2, \alpha(R)}^{ \otimes k } \big ) + d_{ w} \big ( \mu_{V_2, \alpha(R)}^{ \otimes k } ,\lambda_{R,V_2}^{(n,k)} \big ) \longrightarrow 0
\end{equation*}
as $n \rightarrow \infty$ followed by $ \eps \rightarrow 0$. 
\end{proof}

At this point one might ask what happens in the other cases, i.e., when the quantity $R \in (0,\infty)$ in the definition of the Orlicz ball $ \mathbb{B}^{n,V_2}_R$ is \glqq{}big\grqq. This leads to the following corollary of Proposition \ref{PropMainRes}.

\begin{cor}
\label{CorBigVtwoRadius}
Let $V_1, V_2$ be two Orlicz functions that satisfy
\begin{equation*}
\frac{V_1(x)}{V_2(x)} \longrightarrow \infty \quad \text{as} \quad |x| \rightarrow \infty. 
\end{equation*}
Take some $ R \in [\widetilde{R}, \infty)$ with
\begin{equation}
\label{DefRtilde}
\widetilde{R} := \sup_{ \nu \in K_{V_1}^1} m_{V_2}(\nu ) ,
\end{equation}
where $K_{V_1}^1 = m_{V_1}^{-1}( [0,1])$ is the set from Proposition \ref{PropCompactSetGenWasserst}.
Then, for fixed $k \in \N$, we have the weak limit
\begin{equation*}
\lim_{ \eps \rightarrow 0} \lim_{n \rightarrow \infty } d_w ( \CondDistVoneVtwoeps, \lambda_{1 , V_1}^{(n,k)}) = 0,
\end{equation*}
where $d_w$ is some metric inducing weak convergence of probability measures and $ \lambda_{1 , V_1}^{(n,k)}$ is the distribution of the first $k$ components of a uniform distribution on $\mathbb{B}^{n, V_1}_1$. 
\end{cor}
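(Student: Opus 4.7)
The plan is to deduce the corollary directly from Proposition~\ref{PropMainRes}, using the fact that when $R \geq \widetilde{R}$ the auxiliary $V_2$-constraint becomes inactive, so the entropy maximization problem collapses to the one that governs the (unconditional) high-dimensional limit of $\lambda^{(n,k)}_{1,V_1}$ described in Proposition~\ref{PropLimUnifOrlicz}.

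First, I would apply Proposition~\ref{PropMainRes} with the given $R$ (noting that the only hypothesis needed there is $V_1(x)/V_2(x)\to\infty$, which is assumed). This yields
\begin{equation*}
\CondDistVoneVtwoeps \stackrel{d}{\longrightarrow} \nu_*^{\otimes k} \qquad \text{as } n\to\infty \text{ followed by } \eps \to 0,
\end{equation*}
where $\nu_* = \arg\max\{h(\nu) : m_{V_1}(\nu)\leq 1,\, m_{V_2}(\nu) \in [0,R]\}$. The key observation is now that, by the very definition of $\widetilde{R}$ in \eqref{DefRtilde}, any $\nu \in \mathscr{M}_1(\R)$ with $m_{V_1}(\nu)\leq 1$ automatically satisfies $m_{V_2}(\nu) \leq \widetilde{R} \leq R$; since also $m_{V_2}(\nu)\geq 0$, the $V_2$-constraint is redundant, and so
\begin{equation*}
\nu_* = \arg\max\big\{h(\nu) \,:\, m_{V_1}(\nu) \leq 1\big\}.
\end{equation*}

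Next, I would identify this maximizer using Lemma~\ref{LemMaxEntr} with $m=0$ equality constraints and a single inequality constraint $s_1 = V_1$, $\beta_1 = 1$. The Lemma yields a density of the form $\nu_*(dx) = \exp(-1-\kappa_0^* - \mu_1^* V_1(x))\,dx$ with $\mu_1^*\geq 0$. Since such a density must integrate to $1$ on $\R$, one necessarily has $\mu_1^* > 0$, and the slackness condition \eqref{EqSlacknessCond} then forces $m_{V_1}(\nu_*) = 1$. Setting $\alpha(1) := -\mu_1^* \in (-\infty,0)$ and renormalizing, we recognize $\nu_* = \mu_{V_1,\alpha(1)}$, where $\alpha(1)$ is the unique (by Lemma~\ref{AuxLemmaExpTilt}) parameter with $m_{V_1}(\mu_{V_1,\alpha(1)}) = 1$.

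Finally, Proposition~\ref{PropLimUnifOrlicz} (applied with $V=V_1$ and $R=1$) combined with Remark~\ref{RemConDistOrlicGibbs} yields $\lambda^{(n,k)}_{1,V_1} \stackrel{d}{\longrightarrow} \mu_{V_1,\alpha(1)}^{\otimes k}$ as $n\to\infty$. Since both $\widehat{\lambda}^{(R+\eps,k)}_{n,V_1|V_2}$ (in the iterated limit) and $\lambda^{(n,k)}_{1,V_1}$ converge weakly to the same product measure $\mu_{V_1,\alpha(1)}^{\otimes k}$, the triangle inequality
\begin{equation*}
d_w\big(\CondDistVoneVtwoeps,\lambda^{(n,k)}_{1,V_1}\big) \leq d_w\big(\CondDistVoneVtwoeps,\mu_{V_1,\alpha(1)}^{\otimes k}\big) + d_w\big(\mu_{V_1,\alpha(1)}^{\otimes k},\lambda^{(n,k)}_{1,V_1}\big)
\end{equation*}
gives the claimed vanishing. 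The only non-routine point is the identification of $\nu_*$: one must argue that the $V_2$-upper-bound constraint is slack (immediate from the definition of $\widetilde R$) and that the $V_1$-constraint is necessarily tight (forced by the Gibbs form together with the probability-measure normalization); both steps are short, so I expect no serious obstacle beyond carefully citing Proposition~\ref{PropMainRes} and Proposition~\ref{PropLimUnifOrlicz}.
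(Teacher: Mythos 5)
Your proposal is correct and follows essentially the same route as the paper: invoke Proposition \ref{PropMainRes}, observe that $R\geq\widetilde R$ makes the $V_2$-constraint redundant, identify the entropy maximizer as the Gibbs measure $\mu_{V_1,\alpha(1)}$ via Lemma \ref{LemMaxEntr}, and conclude with Proposition \ref{PropLimUnifOrlicz} and the triangle inequality. Your explicit slackness argument for why the $V_1$-constraint must be tight (non-integrability of a constant density forces $\mu_1^*>0$) is a slightly more detailed version of what the paper delegates to the proof of Theorem \ref{ThmLimCondDist}.
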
 

\begin{rem}
By Proposition \ref{PropCompactSetGenWasserst}, $K_{V_1}^1$ is compact in $(\mathscr{M}^{V_2}_1( \R ), d_{V_2} )$ and $m_{V_2}:\mathscr{M}^{V_2}_1( \R ) \rightarrow \R$ is continuous, which implies that $ \widetilde{R} $ from Equation \eqref{DefRtilde} is finite.
\end{rem}

\begin{proof}(of Corollary \ref{CorBigVtwoRadius})
By Proposition \ref{PropMainRes}, we have that the limit in distribution, as $n \rightarrow \infty$ followed by $\eps \rightarrow 0$, of $\CondDistVoneVtwoeps$ is given by $ \nu_{*}^{ \otimes k}$, where 
\begin{equation*}
\nu_{*}:= \arg \max \big \{ h ( \nu ) : \nu \in 
\mathscr{M}^{V_2}_1( \R ) , m_{V_1}( \nu ) \leq 1, m_{V_2}( \nu ) \leq R \big  \}.
\end{equation*}
Since we assume $R \geq \widetilde{R}$, we see that  for each $\nu \in K_{V_1}^1$, the inequality $m_{V_1}( \nu ) \leq 1$ implies $m_{V_2}( \nu ) \leq R $. Thus $\nu_{*}$ is of the form $ \nu_{*} = \arg \max \big \{ h ( \nu ) : \nu \in \mathscr{M}^{V_2}_1( \R ) , m_{V_1}( \nu ) \leq 1 \big \}$, where the maximum is attained by the Gibbs measure $ \mu_{ V_1, \alpha(1)}$ (see proof of Theorem \ref{ThmLimCondDist}). By Proposition \ref{PropLimUnifOrlicz}, we have that $ \lambda_{1,V_1}^{(n,k)} $ converges in distribution to $ \mu_{ V_1, \alpha(1)}^{ \otimes k} $ as $n \rightarrow \infty$ and thus
\begin{equation*}
\lim_{ \eps \rightarrow 0} \lim_{n \rightarrow \infty } d_w ( \CondDistVoneVtwoeps, \lambda_{n , V_1}^{(1,k)}) \leq \lim_{ \eps \rightarrow 0} \lim_{n \rightarrow \infty } d_w ( \CondDistVoneVtwoeps, \mu_{ V_1, \alpha(1)}^{ \otimes k}) + \lim_{n \rightarrow \infty } d_w ( \mu_{ V_1, \alpha(1)}^{ \otimes k}, \lambda_{n , V_1}^{(1,k)}) = 0,
\end{equation*}
which completes the proof.
\end{proof}

\begin{rem}
	\label{RemInterMedCaseOrliczRadius}
	In the \glqq{}intermediate\grqq{} case, where $R \in (\overline{R}, \widetilde{R})$ with $\overline{R}$ being the quantity from Theorem \ref{ThmLimCondDist} and $\widetilde{R}$ from Equation \eqref{DefRtilde}, we get that the limit in distribution of $ \CondDistVoneVtwoeps$ is given by $ \nu_{*}^{ \otimes k}$. $\nu_{*}$ is some distribution proportional to
	\begin{equation*}
	e^{- \mu_1^{*} V_1(x) - \mu_2^{*} V_2(x) }, \quad x \in \R .
	\end{equation*} 
	The constants $ \mu_i^{*} \geq 0$ follow from the maximum entropy principle Lemma \ref{LemMaxEntr}. In total, one can say that the limiting distribution of $ \CondDistVoneVtwoeps $ undergoes a phase transition depending on the magnitude of the parameter $R$.
\end{rem}

\subsection*{Acknowledgement}
LF and JP are supported by the Austrian Science Fund (FWF) Project P32405 \textit{Asymptotic geometric analysis and applications} and FWF Project F5513-N26, which is a part of the Special Research Program \textit{Quasi-Monte Carlo Methods: Theory and Applications}.

\bibliographystyle{plain}
\bibliography{CondLimThm_bib.bib}

\bigskip
\bigskip

\medskip

\small

\noindent \textsc{Lorenz Fr\"uhwirth:} Institute of Mathematics and Scientific Computing,
University of Graz, Heinrichstra{\ss}e 36, 8010 Graz, Austria

\noindent
\textit{E-mail:} \texttt{lorenz.fruehwirth@uni-graz.at}

\medskip

\small

\noindent \textsc{Joscha Prochno:} Faculty of Computer Science and Mathematics, University of Passau, Innstra{\ss}e 33, 94032 Passau, Germany. 

\noindent
\textit{E-mail:} \texttt{joscha.prochno@uni-passau.de}

\end{document}